\newcommand{\Z}{\mathbb Z}
\newcommand{\C}{\mathbb C}
\renewcommand{\P}{\mathbb P}
\newcommand{\lb}{\lbrace}
\newcommand{\rb}{\rbrace}
\newcommand{\la}{\langle}
\newcommand{\ra}{\rangle}
\renewcommand{\phi}{\varphi}
\DeclareMathOperator{\rk}{rk}
\renewcommand{\leq}{\leqslant}
\renewcommand{\geq}{\geqslant}
\theoremstyle{plain}
\newtheorem{thm}{Theorem}[section]
\newtheorem{lm}[thm]{Lemma}
\newtheorem{cor}[thm]{Corollary}
\newtheorem{pr}[thm]{Proposition}
\theoremstyle{remark}
\newtheorem{rem}[thm]{Remark}
\theoremstyle{definition}
\newtheorem{definition}[thm]{Definition}
\newenvironment{Proof}
{\noindent{\it Proof.\/}}{{ $\Box$}\smallskip\par}
\tikzset{
  every picture/.append style={
    execute at begin picture={\shorthandoff{"}},
    execute at end picture={\shorthandon{"}}
  }
}
\begin{document}
\date{}

\author{
	Grigory Solomadin\thanks{The work was done at the Steklov Institute of Mathematics RAS and
supported by the Russian Science Foundation, grant 14-11-00414. The author is a Simons-IUM award winner and would like to thank
its sponsors and jury. Moscow State University, E-mail: \texttt{grigory.solomadin@gmail.com}}
}
\title{Quasitoric totally normally split representatives in unitary cobordism ring}

\maketitle
\abstract{The present paper generalises the results of Ray \cite{ra-86} and Buchstaber-Ray \cite{bu-ra-98}, Buchstaber-Panov-Ray \cite{bu-pa-ra-07} in unitary cobordism theory. I prove that any class $x\in \Omega^{*}_{U}$ of the unitary cobordism ring contains a quasitoric totally normally and tangentially split manifold. 
}

\section{Introduction}

In \cite{ra-86}, N. Ray gave an explicit family of stably complex manifolds representing multiplicative generators of the unitary cobordism ring $\Omega^{*}_{U}$, which are totally tangentially and normally split; i.e. whose stably tangential and normal bundles are split into the Whitney sum of complex line bundles. Both of these properties are respected under the connected sum operation. He also obtained the stably complex manifolds with the above properties representing the additively inverse elements to $\Omega^{*}_{U}$. This led to the following.

\begin{thm}\normalfont{(\cite[Theorem 3.9]{ra-86})}\label{thm:stns}
For any element of the unitary cobordism ring $\Omega_U^{*}$ of degree greater than $2$ there exists a representative which is totally tangentially and normally split.
\end{thm}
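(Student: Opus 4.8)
The plan is to deduce the statement from the two structural inputs already available: Ray's explicit totally tangentially and normally split (henceforth TNS) manifolds $M_i$ representing polynomial generators of $\Omega_U^{*}\cong\Z[a_1,a_2,\dots]$ (with $a_i=[M_i]$ of real dimension $2i$), and his TNS representatives of the additive inverses $-a_i$. The first thing I would establish is that the TNS property is stable under the two geometric operations realising the ring structure. For products this is immediate from $T(M\times N)\cong\pi_M^{*}TM\oplus\pi_N^{*}TN$ and the analogous splitting of the stable normal bundle: pulling back and Whitney-summing two line-bundle decompositions again yields a line-bundle decomposition, so $M\times N$ is TNS whenever $M$ and $N$ are. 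For sums I would use that a disjoint union of TNS manifolds is visibly TNS and satisfies $[M\sqcup N]=[M]+[N]$; when a connected representative is wanted one replaces $\sqcup$ by connected sum, which preserves TNS by the remark in the introduction and does not change the cobordism class.

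Granting these closure properties, every monomial $a^{\alpha}=a_1^{k_1}a_2^{k_2}\cdots$ is represented by the TNS product $M_1^{\times k_1}\times M_2^{\times k_2}\times\cdots$. The crux is then to pass from monomials to arbitrary integer combinations. Writing a homogeneous class as $x=\sum_{\alpha}c_{\alpha}a^{\alpha}$ with $c_{\alpha}\in\Z$ and all $a^{\alpha}$ of the same degree, I would realise each summand separately and take their disjoint union. A summand with $c_{\alpha}>0$ is a disjoint union of $c_{\alpha}$ copies of the product above. The essential point is the summands with $c_{\alpha}<0$: here I would observe that it is enough to invert a \emph{single} factor of the monomial. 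Choosing a generator $a_{i_0}$ with $k_{i_0}\geq 1$ and letting $N_{i_0}$ be Ray's TNS representative of $-a_{i_0}$, the product $N_{i_0}\times M_{i_0}^{\times(k_{i_0}-1)}\times\prod_{i\neq i_0}M_i^{\times k_i}$ is TNS and represents $-a^{\alpha}$, so $|c_{\alpha}|$ disjoint copies represent $c_{\alpha}a^{\alpha}$.

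The step I expect to be the main obstacle is precisely the supply of TNS representatives for the negatives $-a_i$, on which the previous paragraph rests. When $i$ is odd this is cheap: the conjugate stably complex manifold $\overline{M_i}$ has $[\overline{M_i}]=(-1)^{i}[M_i]=-a_i$ and inherits a line-bundle splitting of both its tangent and normal bundles by conjugating each summand, so $\overline{M_i}$ is TNS; in particular $-[\C\P^{1}]=[\overline{\C\P^{1}}]$ is TNS. When $i$ is even, conjugation only reproduces $+a_i$, and one must instead produce a different TNS manifold together with a cobordism certifying that it caps $M_i$ off up to sign while keeping the tangential and normal splittings intact. This is the delicate geometric content supplied by Ray's prior construction, and checking that the required splittings survive the cap-off is where the real difficulty concentrates.

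Finally, the degree hypothesis plays the role of excluding the degenerate low-dimensional summands, where there is too little room to carry out the inverse construction uniformly and which must be inspected directly; with those cases set aside, the closure lemmas together with the generator and inverse representatives assemble a TNS representative of every class of degree greater than $2$, as claimed.
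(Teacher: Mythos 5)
Your proposal is correct and follows essentially the same route as the paper, which itself does not reprove this statement but attributes it to Ray and sketches exactly your argument: explicit totally tangentially and normally split representatives of the polynomial generators, companion split representatives of their additive inverses, and closure of both splitting properties under products and (connected or disjoint) sums used to assemble arbitrary monomials and integer combinations. Your additional observation that conjugation handles $-a_{i}$ for odd $i$, with only the even case resting on Ray's geometric construction, is a valid refinement consistent with the inputs the paper takes from \cite{ra-86}.
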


In \cite{bu-ra-98}, V.M.~Buchstaber and N.~Ray constructed toric varieties, which are polynomial generators of $\Omega^{*}_{U}$. Remind that a connected complex algebraic variety $X$ is called a \emph{toric variety} if $X$ admits an effective action of the algebraic torus $(\C^{\times})^{{\rm dim}_\C X}$ with dense open orbit (cf. \cite{bu-pa-15}). One can see that the connected sum of any two toric varieties is not toric; the corresponding obstruction is Todd genus, which takes value $1$ on any toric variety and is additive. To work around this, in \cite{bu-pa-ra-07} V.M.~Buchstaber, T.~Panov and N.~Ray introduced the box sum operation in the wider category of quasitoric manifolds (toric manifolds in sense of Davis and Januszkiewicz \cite{da-ja-91}). Given any two quasitoric manifolds, their box sum represents the class of the respective sum in $\Omega^{*}_{U}$. The ``-1''-problem of finding a manifold with additively inverse cobordism class was solved by taking the same smooth manifold with another stably complex structure (namely, the one given by the minus omniorientation matrix). Quasitoric manifolds have the property of being totally tangentially split. This led to another result.

\begin{thm}[\cite{bu-pa-ra-07}]\label{thm:q}
For any element of the unitary cobordism ring $\Omega_U^{*}$ of degree greater than $2$ there exists a representative which is a quasitoric totally tangentially split manifold.
\end{thm}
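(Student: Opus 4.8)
The plan is to realise an arbitrary class $x\in\Omega_U^{*}$ of real dimension $n>2$ by assembling it out of quasitoric totally tangentially split (hereafter TTS) building blocks, using three operations that all stay inside this class: products, box sums, and reversal of the omniorientation. First I would invoke the integral structure $\Omega_U^{*}\cong\mathbb{Z}[a_1,a_2,\dots]$ together with the result of Buchstaber--Ray \cite{bu-ra-98}, which supplies, for each multiplicative generator $a_i$, a representative that is a quasitoric (in fact toric) manifold. Since every quasitoric manifold is TTS --- its stable tangent bundle splits as a Whitney sum of the line bundles associated to the facets, independently of the chosen omniorientation --- these generators are already quasitoric TTS manifolds of the correct dimension.

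Next I would treat monomials and sums separately. For a monomial $a_{i_1}\cdots a_{i_r}$ I would use that the product $M\times N$ of quasitoric manifolds is again quasitoric (over the product polytope, with the product characteristic data) and that $T(M\times N)\cong \pi_1^{*}TM\oplus\pi_2^{*}TN$, so a product of TTS manifolds is TTS. Hence each monomial occurring in $x$ is represented by a connected quasitoric TTS manifold, and after fixing the total degree $n$ all these representatives $M_k$ share the same real dimension $n\geq 4$. Writing $x=\sum_k n_k[M_k]$ as a finite integer combination, it then remains to realise integer linear combinations of classes of equidimensional quasitoric TTS manifolds.

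For the additive step I would appeal to the box sum operation of Buchstaber--Panov--Ray \cite{bu-pa-ra-07}: for omnioriented quasitoric manifolds $M,N$ of equal real dimension $\geq 4$, the box sum $M\boxplus N$ is again quasitoric, is TTS, and satisfies $[M\boxplus N]=[M]+[N]$ in $\Omega_U^{*}$. This is exactly the device that circumvents the Todd-genus obstruction to connected sums of toric varieties, and it is also the point where the hypothesis $n>2$ enters, since the construction requires enough room in the polytope (real dimension at least $4$). To handle negative coefficients I would keep the underlying smooth quasitoric manifold $M_k$ fixed but replace its omniorientation matrix $\Lambda$ by $-\Lambda$; the smooth manifold, and hence its tangential splitting, is unchanged, while the resulting stably complex structure represents $-[M_k]$. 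Box-summing $|n_k|$ copies of $M_k$ with the omniorientation dictated by the sign of $n_k$, and then box-summing the results across $k$, yields a single connected quasitoric TTS manifold whose cobordism class is $\sum_k n_k[M_k]=x$.

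The hard part will be the box sum construction itself: verifying that the equivariant connected sum of two quasitoric manifolds can be endowed with a genuine quasitoric structure and omniorientation whose cobordism class is precisely the sum of the summands, all while the tangential splitting survives the surgery. This is where the passage from toric varieties to the larger quasitoric category is indispensable, and where the dimension bound $n>2$ is used. A secondary, more routine verification is that the $-\Lambda$ omniorientation really produces the additive inverse in $\Omega_U^{*}$ and leaves the manifold totally tangentially split, so that sign reversal costs nothing in the target class of representatives.
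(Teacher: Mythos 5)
Your proposal is correct and follows essentially the same route that the paper attributes to Buchstaber--Panov--Ray \cite{bu-pa-ra-07} (the paper itself offers no independent proof, only the citation and a sketch in the Introduction): toric representatives of the multiplicative generators from \cite{bu-ra-98}, products of quasitoric manifolds for the monomials, the box sum for addition, reversal of the omniorientation matrix for additive inverses, and the Davis--Januszkiewicz facet-bundle splitting for total tangential splitness. Your reconstruction matches that cited argument, including the role of the degree bound and the observation that all three operations preserve the class of quasitoric totally tangentially split manifolds.
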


Here a natural generalisation of the above theorems is given.

\begin{thm}\label{thm:main}
For any element of the unitary cobordism ring $\Omega_U^{*}$ of degree greater than $2$ there exists a representative which is a quasitoric totally tangentially and normally split manifold.
\end{thm}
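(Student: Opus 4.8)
The plan is to combine the two existing constructions: the quasitoric representatives of Buchstaber-Panov-Ray (Theorem~\ref{thm:q}) provide total tangential splitting, while Ray's construction (Theorem~\ref{thm:stns}) provides total normal splitting. The obstacle is that these live in different categories, so I cannot simply quote both. Instead I would build an explicit family of quasitoric manifolds for which \emph{both} the stable tangent bundle \emph{and} the stable normal bundle split as Whitney sums of complex line bundles, and show this family is closed under the operations needed to realise an arbitrary cobordism class.

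The starting objects are the smooth projective toric varieties $B_{i,j}$ (the Bott towers / bounded flag manifolds and projective bundles) used by Buchstaber and Ray in \cite{bu-ra-98} to produce polynomial generators of $\Omega^{*}_{U}$. These are quasitoric, and being iterated projectivisations of sums of line bundles, their tangent bundle splits tangentially. The first key step is to verify that for these specific manifolds the \emph{normal} bundle of a suitable stably complex embedding also splits into line bundles: for a projective bundle $\P(\xi)$ over a base that is already totally normally split, an explicit splitting of the normal bundle follows from the line-bundle decomposition of the relative tangent and the normal bundle of the base, so this should propagate inductively up the tower. I would record this as a lemma: the generators $B_{i,j}$ are quasitoric totally tangentially \emph{and} normally split (TTNS).

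\emph{First} I would establish that the class of quasitoric TTNS manifolds is closed under the box sum (connected sum) operation in the quasitoric category, exactly as in \cite{bu-pa-ra-07}, but now tracking the normal splitting in parallel with the tangential one: since the box sum glues two manifolds that are each split of both types, and the characteristic submanifolds along which one glues are themselves split, the resulting omniorientation yields a Whitney-sum decomposition of both bundles. \emph{Second}, to realise arbitrary products I would check that the product of two quasitoric TTNS manifolds is again quasitoric TTNS, which is immediate since tangent and normal bundles of a product are external Whitney sums. \emph{Third}, for the ``$-1$''-problem I would take the same underlying quasitoric manifold with the minus omniorientation, as in \cite{bu-pa-ra-07}, and verify that reversing the omniorientation preserves the normal line-bundle splitting (it only changes the signs/orientations of the line summands), so additive inverses stay inside the TTNS family.

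The main obstacle I expect is the normal splitting lemma for the projectivised tower: unlike tangential splitting, which is essentially automatic for projective bundles, a splitting of the \emph{stable normal} bundle into line bundles is a genuine constraint, and one must produce a concrete complex embedding (or stable framing-up-to-line-bundles) compatible with the projective bundle structure at each stage. Once the generators are shown to be TTNS and the family is shown closed under box sum, product, and omniorientation reversal, the theorem follows: every class $x\in\Omega^{*}_{U}$ of degree $>2$ is a polynomial in the Buchstaber-Ray generators, hence represented by sums and products of TTNS manifolds together with their inverses, and all of these remain quasitoric totally tangentially and normally split.
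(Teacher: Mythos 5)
Your proposal founders on its first key lemma: the claim that the Buchstaber--Ray generators are totally normally split is false, and the paper's Section \ref{sec:gens} is devoted precisely to proving this. The obstruction is Mitchell's theorem (Lemma \ref{lm:cpsp}): $\C P^{n}$ is not totally normally split for $n>1$. Since $BR_{i,j}=\P\bigl(\bigoplus_{k=1}^{i}\overline{\beta_{k}^{*}}\oplus\underline{\C}^{j-i}\bigr)\to BF_{i}$ is the projectivisation of a bundle of rank $j>2$ (for $j>2$), it contains a fibre $\C P^{j-1}$ whose stable normal bundle is the restriction of that of the total space; if $BR_{i,j}$ were totally normally split, so would $\C P^{j-1}$ be, a contradiction (Proposition \ref{pr:nsp} and Corollary \ref{cor:nspl}). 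The same argument kills the Milnor hypersurfaces $H_{i,j}$ and the L\"u--Panov generators $L(i,j)$. So normal splitting does \emph{not} ``propagate inductively up the tower'' for projective bundles of fibre dimension greater than one --- it propagates only for $\C P^{1}$-bundles, i.e.\ for bounded flag bundles (Proposition \ref{pr:split}), which is exactly why the paper restricts to that setting. You flagged this step as ``the main obstacle I expect'' but then assumed it away; in fact it is the point where every previously known family of generators fails, and it is the reason Theorems \ref{thm:stns} and \ref{thm:q} could not simply be merged before this paper.

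The remaining ingredients of your proposal (closure of the quasitoric totally split class under box sum, additive inverse via the reversed omniorientation, products) are sound and agree with Proposition \ref{pr:conn}, but they are the easy part. What is genuinely missing is a supply of multiplicative generators inside the class. The paper manufactures them by hand: it takes the Bott towers $X_{i,j}=BF(\overline{\beta}_{i},\overline{\beta}^{*}_{i},\dots,\overline{\beta}^{*}_{1},\underline{\C},\dots,\underline{\C})\to BF_{i}$, which are totally normally split by Proposition \ref{pr:split} but whose Milnor numbers $s_{i+j}(X_{i,j})\in\lb 0, 2\binom{i+j}{i}\rb$ are inadequate, and then corrects them by blowing up an invariant codimension-$2$ subvariety $Z_{i,j}$ --- an operation shown to preserve stable normal splitting (Proposition \ref{pr:constr}). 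Proving that the resulting manifolds $M_{i,j}$ (together with $BF_{n}$) yield Milnor numbers with the right gcd in every degree requires the computations of Section \ref{sec:compmi} and the number-theoretic work of Section \ref{sec:nt} (Lucas' theorem, Granville's congruences, Proposition \ref{pr:pmain}). None of this can be bypassed by citing the existing generator families, since those families lie outside the class of totally normally split manifolds.
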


The manifolds discussed above are shown in Section \ref{sec:gens} to be not totally normally split, in general. So, to prove Theorem \ref{thm:main}, some methods are developed.

First, a globalisation of bounded flag manifold is given in Section \ref{sec:bffb}; the necessary data are a complex manifold and an ordered tuple of complex line bundles over the latter. A formula for the Milnor number of the corresponding stably complex manifold is given.

Second, a method of producing new totally normally split toric varieties is given in Section \ref{sec:gens}. Namely, this is blow-up of a totally normally split toric variety at an invariant complex codimension $2$ subvariety.

These are used to construct some totally normally split toric varieties which are then shown to be multiplicative generators of $\Omega_U^{*}$. Finally, a possible adaptation of N.~Ray's construction to Theorem \ref{thm:main} is discussed in Section \ref{sec:conc}.


The author is grateful to V.M.\,Buchstaber and T.E.\,Panov for numerous fruitful discussions.

\section{Bounded flag fibre bundles}\label{sec:bffb}


The idea of the bounded flag manifold (\cite{bu-ra-98'}, \cite[\S 7.7]{bu-pa-15}) can be globalised in terms of fiber bundles. In this Section, the corresponding construction is given. For the rest of this Section, $X$ stands for a compact stably complex smooth manifold of real dimension $2n$ and $\xi_{i}\to X,\rk \xi_{i}=1, i=1,\dots,k+1$, are complex linear vector bundles over $X$. Also let $\xi:=\bigoplus_{i=1}^{k+1}\xi_{i}$. Everywhere below pull-backs and tensor products of vector bundles are omitted, unless otherwise specified.

\subsection{Definition and properties}
We start this Section with an inductive definition
\begin{definition}\label{def:bott_glob}
Let $BF(\xi_{1}):=X,\zeta_{1}:=\xi_{1}\to BF(\xi_{1})$. For $k\geq 1$ $BF(\xi_{k+1},\dots,\xi_{1})$ is by definition a $\C P^{1}$-bundle over $BF(\xi_{k},\dots,\xi_{1})$. Namely, one has:
\begin{equation}\label{eq:bffbdef}
BF(\xi_{k+1},\dots,\xi_{1})=\P (\zeta_{k}\oplus\xi_{k+1})\to BF(\xi_{k},\dots,\xi_{1}).
\end{equation}
$\zeta_{k+1}$ is by definition the tautological line bundle over $BF(\xi_{k+1},\dots,\xi_{1})$.
\end{definition}

\begin{rem}
Topology and complex structure of the bounded flag bundle $BF(\xi_{k+1},\dots,\xi_{1})$, generally speaking, depend on the order of the tuple $(\xi_{k+1},\dots,\xi_{1})$.
\end{rem}


The bounded flag bundle $BF(\xi_{n+1},\dots,\xi_{1})$ has the natural complex (algebraic, toric, resp.) structure coming from the well-known complex (algebraic, toric, resp.) structure on projectivisations of complex vector bundles over complex (algebraic, toric, resp.) manifolds (non-singular varieties, resp.). (See, for example, \cite{so-us-16}.) The natural complex structure on the bounded flag bundle $BF(\xi_{n+1},\dots,\xi_{1})$ is given by the formula:

\begin{equation}\label{eq:cstr}
TBF(\xi_{k+1},\dots,\xi_{1})\oplus\underline{\C}^{k}\simeq \bigoplus_{i=1}^{k}(\zeta_{i}\oplus\xi_{i+1})\overline{\zeta_{i+1}}\oplus TX,
\end{equation}
where $\underline{\C}$ is the linear trivial vector bundle over $BF(\xi_{k+1},\dots,\xi_{1})$ and $\overline{\zeta_{i+1}}$ is complex conjugate to $\zeta_{i+1}$.

Let $\zeta_{k+1}^{*}\to BF(\xi_{k+1},\dots,\xi_{1})=\P(\zeta_{k}\oplus\xi_{k+1})\to BF(\xi_{k},\dots,\xi_{1})$ be the vector bundle s.t. the corresponding fiber at a point $l\subset(\zeta_{k}\oplus\xi_{k+1})_{b}, b\in BF(\xi_{k},\dots,\xi_{1})$, consists of the vectors from the line $l^{\perp}$ orthogonal to $l$. ($\zeta_{k}\oplus\xi_{k+1}$ is endowed with a hermitian metric as a vector bundle over a compact manifold.) Define $\zeta^{*}_{i}\to BF(\xi_{k+1},\dots,\xi_{1})$ as the pull-back of the corresponding bundle $\zeta^{*}_{i}\to BF(\xi_{i},\dots,\xi_{1})$ under the composition $BF(\xi_{k+1},\dots,\xi_{1})\to BF(\xi_{i},\dots,\xi_{1})$ of projection maps \eqref{eq:bffbdef}, $i=1,\dots,k+1$. The vector bundle $\zeta_{i}^{*}\to BF(\xi_{k+1},\dots,\xi_{1})$ is linear and satisfies the identity (cf. \cite{ra-86}):
\begin{equation}\label{eq:1.13}
\zeta_{i+1}\oplus\zeta_{i+1}^{*}\simeq\zeta_{i}\oplus\xi_{i+1},\ i=1,\dots,k.
\end{equation}
Hence,
\begin{equation}\label{eq:14}
\zeta_{i+1}\oplus\bigoplus_{k=1}^{i}\zeta_{k+1}^{*}\simeq\bigoplus_{k=1}^{i+1}\xi_{i},
\end{equation}\label{eq:3}
where $i=1,\dots,k$.

Remind that for a complex vector bundle $\alpha\to X$ there exists a stably inverse complex vector bundle $\theta\to X$, i.e. $\alpha\oplus\theta\simeq\underline{\C}^{r}$, where $\underline{\C}^{r}$ is the trivial complex vector bundle of rank $r$ over $X$. It is unique up to stable isomorphism. The following Lemma uses the argument similar to the one from \cite{ra-86}.

\begin{lm}\label{lm:66}
Let $\alpha,\alpha'\to X$ be complex linear vector bundles whose stably inverse complex vector bundles are totally split. Let $f:Y\to X$ be a continious map. Then the stably inverse complex vector bundles to $\overline{\alpha},f^{*}\alpha,\alpha\oplus\alpha',\alpha\alpha'$ are totally split.
\end{lm}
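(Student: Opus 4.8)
The plan is to treat each of the four bundles separately, in every case exhibiting an explicit totally split complex vector bundle stably inverse to it; since the stable inverse is unique up to stable isomorphism, this suffices. Throughout I fix totally split stable inverses $\theta\simeq\bigoplus_{i}\beta_{i}$ of $\alpha$ and $\theta'\simeq\bigoplus_{j}\beta'_{j}$ of $\alpha'$, so that $\alpha\oplus\theta\simeq\underline{\C}^{r}$ and $\alpha'\oplus\theta'\simeq\underline{\C}^{s}$ for suitable $r,s$, where all $\beta_{i},\beta'_{j}$ are complex line bundles.

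For the first three bundles I would simply apply a natural operation to the trivialisation $\alpha\oplus\theta\simeq\underline{\C}^{r}$. Conjugating it yields $\overline{\alpha}\oplus\overline{\theta}\simeq\underline{\C}^{r}$, and $\overline{\theta}\simeq\bigoplus_{i}\overline{\beta_{i}}$ is totally split, so $\overline{\theta}$ is the desired inverse of $\overline{\alpha}$. Pulling the relation back along $f$ gives $f^{*}\alpha\oplus f^{*}\theta\simeq\underline{\C}^{r}$ with $f^{*}\theta\simeq\bigoplus_{i}f^{*}\beta_{i}$ totally split, which handles $f^{*}\alpha$. Forming the direct sum of the two trivialisations gives $(\alpha\oplus\alpha')\oplus(\theta\oplus\theta')\simeq\underline{\C}^{r+s}$, and $\theta\oplus\theta'$ is totally split, which handles $\alpha\oplus\alpha'$.

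The tensor product $\alpha\alpha'$ is the one case that genuinely uses multiplicativity and is where the real work lies. Here I would tensor the relation $\alpha\oplus\theta\simeq\underline{\C}^{r}$ by $\alpha'$ to obtain $\alpha\alpha'\oplus\theta\alpha'\simeq\alpha'^{\oplus r}$, in which $\theta\alpha'\simeq\bigoplus_{i}\beta_{i}\alpha'$ is totally split, but the right-hand side is a sum of copies of $\alpha'$ rather than a trivial bundle. To remove these copies I would add $r$ copies of $\theta'$ to both sides and use $\alpha'\oplus\theta'\simeq\underline{\C}^{s}$, giving
\[
\alpha\alpha'\oplus\Big(\theta\alpha'\oplus\theta'^{\oplus r}\Big)\simeq(\alpha'\oplus\theta')^{\oplus r}\simeq\underline{\C}^{rs}.
\]
Since $\theta\alpha'\oplus\theta'^{\oplus r}$ is again a direct sum of complex line bundles, it is a totally split stable inverse of $\alpha\alpha'$, completing the argument.

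The only genuine obstacle is the tensor product case: the first three reduce to functoriality of the defining relation (conjugation, pull-back, direct sum), whereas for $\alpha\alpha'$ one must both tensor one trivialisation by $\alpha'$ and then re-trivialise the spurious copies of $\alpha'$ using the totally split inverse $\theta'$. It is precisely here that the hypothesis on $\alpha'$, and not only on $\alpha$, is essential.
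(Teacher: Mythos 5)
Your proof is correct and follows essentially the same route as the paper: the paper also dismisses $\overline{\alpha}$, $f^{*}\alpha$, $\alpha\oplus\alpha'$ as straightforward (via exactly the operations you describe), and for the tensor product it tensors the two trivialisations together, $(\alpha\oplus\theta)\otimes(\alpha'\oplus\theta')\simeq\underline{\C}^{(m+1)(m'+1)}$, so that the complement of $\alpha\alpha'$ is $\alpha\theta'\oplus\alpha'\theta\oplus\theta\theta'$, a sum of line bundles. Your two-step variant (tensor by $\alpha'$, then pad with $\theta'^{\oplus r}$) produces a complement $\theta\alpha'\oplus\theta'^{\oplus r}$ which is isomorphic to the paper's, since $\theta'^{\oplus r}\simeq\theta'\otimes(\alpha\oplus\theta)\simeq\alpha\theta'\oplus\theta\theta'$, so the two arguments coincide up to reorganisation.
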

\begin{proof}
The claim of the Lemma is straightforward to check for $\overline{\alpha},f^{*}\alpha,\alpha\oplus\alpha'$. Let $\theta=\bigoplus_{i=1}^{m}\theta_{i},\theta'=\bigoplus_{i=1}^{m'}\theta'_{i}\to X,\rk_{\C}\theta_{i}=\rk_{\C}\theta'_{i}=1$, be complex totally split vector bundles such that
\[
\alpha\oplus\theta=\underline{\C}^{m+1},\alpha'\oplus\theta'=\underline{\C}^{m'+1}.
\]
Taking tensor product of the above two vector bundles one obtains:
\[
\alpha\alpha'\oplus\bigl(\alpha\theta'\oplus\alpha'\theta\oplus\theta\theta'\bigr)=\underline{\C}^{(m+1)(m'+1)}.
\]
It remains to observe that $\alpha\theta'\oplus\alpha'\theta\oplus\theta\theta'$ is a Whitney sum of complex linear vector bundles.
\end{proof}

\begin{pr}\label{pr:split}
Suppose that $X$ is totally normally split and stably inverses to $\xi_{i}\to X$ are totally split for all $i=1,\dots,k+1$. Then $BF(\xi_{k+1},\dots,\xi_{1})\to X$ is totally normally split.
\end{pr}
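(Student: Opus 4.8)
The plan is to argue entirely in terms of the class $\mathcal{S}$ of complex vector bundles (over any base) whose stably inverse bundle is totally split. Lemma \ref{lm:66} says exactly that $\mathcal{S}$ is closed under complex conjugation, pullback, Whitney sum and tensor product, and trivial bundles clearly lie in $\mathcal{S}$. Since a manifold is totally normally split precisely when its stable tangent bundle lies in $\mathcal{S}$, the goal is to prove $TY\in\mathcal{S}$, where $Y:=BF(\xi_{k+1},\dots,\xi_{1})$ with projection $\pi\colon Y\to X$. The strategy has two stages: first show that each tautological bundle $\zeta_{i+1}$ lies in $\mathcal{S}$, then feed this into the tangent bundle formula \eqref{eq:cstr}.

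For the first stage I would use the identity \eqref{eq:14}, which presents $\zeta_{i+1}\oplus\bigoplus_{j=1}^{i}\zeta_{j+1}^{*}$ as (the pullback of) $\bigoplus_{j=1}^{i+1}\xi_{j}$. By hypothesis each $\xi_{j}\in\mathcal{S}$, so choosing totally split $\sigma_{j}$ with $\xi_{j}\oplus\sigma_{j}\simeq\underline{\C}^{m_{j}}$ and pulling back along $\pi$ gives
\[
\zeta_{i+1}\oplus\Bigl(\bigoplus_{j=1}^{i}\zeta_{j+1}^{*}\oplus\bigoplus_{j=1}^{i+1}\pi^{*}\sigma_{j}\Bigr)\simeq\underline{\C}^{m_{1}+\dots+m_{i+1}}.
\]
The bundle in parentheses is a Whitney sum of linear bundles (each $\zeta_{j+1}^{*}$ is linear) together with totally split bundles, hence is itself totally split; being a stably inverse bundle to $\zeta_{i+1}$, it witnesses $\zeta_{i+1}\in\mathcal{S}$.

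The second stage is routine once the first is in hand. Starting from \eqref{eq:cstr} and distributing, I would write the right-hand side as $\pi^{*}TX\oplus\bigoplus_{i=1}^{k}\bigl(\zeta_{i}\overline{\zeta_{i+1}}\oplus\xi_{i+1}\overline{\zeta_{i+1}}\bigr)$. Since $X$ is totally normally split, $TX\in\mathcal{S}$, hence $\pi^{*}TX\in\mathcal{S}$ by the pullback clause of Lemma \ref{lm:66}. For the remaining summands the first stage gives $\zeta_{i},\zeta_{i+1}\in\mathcal{S}$; conjugation yields $\overline{\zeta_{i+1}}\in\mathcal{S}$, and the tensor-product clause then gives $\zeta_{i}\overline{\zeta_{i+1}},\,\xi_{i+1}\overline{\zeta_{i+1}}\in\mathcal{S}$. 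Closure under Whitney sum shows $TY\oplus\underline{\C}^{k}\in\mathcal{S}$; as adding a trivial summand does not affect membership in $\mathcal{S}$, we get $TY\in\mathcal{S}$, i.e. $Y$ is totally normally split.

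The delicate point — and the reason Lemma \ref{lm:66} and the identity \eqref{eq:14} are needed in place of a naive argument — is that an individual line bundle need \emph{not} lie in $\mathcal{S}$, and $\mathcal{S}$ is not visibly closed under cancellation of summands, so one cannot simply ``solve'' \eqref{eq:1.13} for $\zeta_{i+1}$. I expect this to be the main obstacle. The trick that dissolves it is that in \eqref{eq:14} every term accompanying $\zeta_{i+1}$ is itself linear, so those terms can be absorbed into the totally split inverse rather than cancelled; this is the step I would verify most carefully.
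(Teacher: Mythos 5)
Your proposal is correct and takes essentially the same route as the paper's own proof: identity \eqref{eq:14} plus the hypothesis on the $\xi_{i}$ shows each tautological bundle $\zeta_{i}$ has a totally split stable inverse, and then formula \eqref{eq:cstr} together with the closure properties of Lemma \ref{lm:66} finishes the argument. Your write-up merely makes explicit the absorption step (the accompanying $\zeta_{j+1}^{*}$ are linear, so they join the totally split inverse rather than needing to be cancelled) that the paper leaves implicit.
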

\begin{proof}
The identity \eqref{eq:14} and the property of being totally split for $\xi_{i}$'s implies that $\zeta_{i}\to BF(\xi_{k+1},\dots,\xi_{1})$ is stably normally splitting for $i=1,\dots,k+1$. Now the claim follows from the formula \eqref{eq:cstr} and Lemma \ref{lm:66}.
\end{proof}
Remind that $s_{n}(X)$ is the Milnor number of the stably complex manifold $X$:
\[
s_{n}(X^{n})=\la t_{1}+\cdots+t_{k},[X^{2n}]\ra\in\Z,
\]
where $t_{1},\dots,t_{k}$ are Chern roots of $X$ and the coupling is the natural one $H^{2n}(X;\Z)\times H_{2n}(X;\Z)\to\Z$. Let $c_{1}(\xi_{i})=x_{i}\in H^{2}(X;\Z)$.
\begin{pr}\label{pr:snbfb}
If $n+k$ is even, then $s_{n+k} (BF(\xi_{k+1},\dots,\xi_{1}))=0.$ Otherwise,
\[
s_{n+k}(BF(\xi_{k+1},\dots,\xi_{1}))=2\la(1+x_{k+1})^{n+k-1}(1+x_{k})^{-1}\cdots (1+x_{1})^{-1},[X^{n}]\ra.
\]
\end{pr}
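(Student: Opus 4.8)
The plan is to read off $s_{n+k}$ from the explicit complex structure \eqref{eq:cstr} and then evaluate the resulting characteristic class by integrating over the fibres of the tower of $\C P^{1}$-bundles defining $BF(\xi_{k+1},\dots,\xi_{1})$. The first step uses that the Milnor class $s_{m}$ (the $m$-th Newton power sum in the Chern roots) is additive over Whitney sums and vanishes on trivial summands. Writing $M:=BF(\xi_{k+1},\dots,\xi_{1})$ and $z_{i}:=c_{1}(\zeta_{i})$, formula \eqref{eq:cstr} gives, modulo the trivial bundle $\ul{\C}^{k}$,
\[
s_{n+k}(M)=s_{n+k}(TX)+\sum_{i=1}^{k}\bigl[(z_{i}-z_{i+1})^{n+k}+(x_{i+1}-z_{i+1})^{n+k}\bigr],
\]
each summand being the $(n+k)$-th power of the first Chern class of one of the line bundles $\zeta_{i}\ol{\zeta_{i+1}}$, $\xi_{i+1}\ol{\zeta_{i+1}}$. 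Since the Chern roots of $TX$ pull back from $H^{*}(X)$ and $X$ has real dimension $2n<2(n+k)$, the term $s_{n+k}(TX)$ vanishes on $M$. Denote the surviving sum by $S$.

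Next I would evaluate $\la S,[M]\ra$ by pushing down the tower $M=M_{k}\to M_{k-1}\to\cdots\to M_{0}=X$, where $M_{j}\to M_{j-1}$ is the $\C P^{1}$-bundle $\P(\zeta_{j}\oplus\xi_{j+1})$ with fibre class $z_{j+1}$. The Grothendieck relation here is $(z_{j+1}-z_{j})(z_{j+1}-x_{j+1})=0$, and on the fibre $\pi_{j*}(1)=0$, $\pi_{j*}(z_{j+1})=-1$ (as $\zeta_{j+1}$ restricts to $\mc O(-1)$ on $\C P^{1}$); hence $\pi_{j*}f(z_{j+1})=-\bigl(f(z_{j})-f(x_{j+1})\bigr)/(z_{j}-x_{j+1})$ for any polynomial $f$, by Lagrange interpolation. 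Applying $\pi_{k*}$ to $S$, every summand with $i<k$ is pulled back from $M_{k-1}$ and dies against $\pi_{k*}(1)=0$, so only $i=k$ survives, and the interpolation formula yields $\pi_{k*}(S)=\bigl[1-(-1)^{n+k}\bigr](z_{k}-x_{k+1})^{n+k-1}$. This is $0$ when $n+k$ is even, which proves the first case, and equals $2(z_{k}-x_{k+1})^{n+k-1}$ when $n+k$ is odd.

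For $n+k$ odd it remains to compute $\la(z_{k}-x_{k+1})^{n+k-1},[M_{k-1}]\ra$, i.e. to push $M_{k-1}=BF(\xi_{k},\dots,\xi_{1})$ down to $X$. Iterating the fibre-integration rule (equivalently, reading off the Segre classes of the successive $\C P^{1}$-bundles) I would establish $p_{*}(z_{k}^{m})=(-1)^{k-1}h_{m-k+1}(x_{1},\dots,x_{k})$, where $p:M_{k-1}\to X$ and $h_{d}$ is the complete homogeneous symmetric polynomial; a clean way to package this is the generating identity $p_{*}\bigl((1-tz_{k})^{-1}\bigr)=(-t)^{k-1}\prod_{i=1}^{k}(1-tx_{i})^{-1}$, using $z_{1}=x_{1}$. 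Expanding $(z_{k}-x_{k+1})^{n+k-1}$ binomially, pushing forward term by term, and rewriting $\prod_{i}(1+x_{i})^{-1}=\sum_{e}(-1)^{e}h_{e}(x_{1},\dots,x_{k})$, I would recognise the outcome as $(-1)^{n+k-1}$ times the degree-$2n$ part of $(1+x_{k+1})^{n+k-1}\prod_{i=1}^{k}(1+x_{i})^{-1}$; since $n+k$ is odd the sign is $+1$, giving exactly the claimed value.

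The routine but error-prone heart of the argument is the sign bookkeeping: the Gysin map contributes $\pi_{j*}(z_{j+1})=-1$ per $\C P^{1}$-factor, and matching the complete-homogeneous expansion of $p_{*}(z_{k}^{m})$ against the two-sided product $(1+x_{k+1})^{n+k-1}\prod(1+x_{i})^{-1}$ forces one to track the parities of $k-1$ and $n$ simultaneously. These conspire, precisely under the hypothesis that $n+k$ be odd, to produce the overall factor $+2$. I expect this final reconciliation of Gysin signs with the symmetric-function identities to be the main obstacle, whereas the vanishing for $n+k$ even falls out immediately from the factor $1-(-1)^{n+k}$ in the top fibre integration.
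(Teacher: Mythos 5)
Your proposal is correct and takes essentially the same approach as the paper: additivity of $s_{n+k}$ over the stable splitting \eqref{eq:cstr} (with the pulled-back terms killed for dimension/projection-formula reasons), followed by fibre integration down the tower of $\C P^{1}$-bundles, where your Gysin/Lagrange-interpolation identities and the generating function $p_{*}\bigl((1-tz_{k})^{-1}\bigr)=(-t)^{k-1}\prod_{i=1}^{k}(1-tx_{i})^{-1}$ are exactly the paper's Segre-class computations in repackaged form. The only organizational difference is that you merge the two surviving line-bundle terms at the top level, extracting the parity factor $1-(-1)^{n+k}$ before descending, whereas the paper pushes each of the two terms down the whole tower separately and observes the cancellation (for $n+k$ even) or the doubling (for $n+k$ odd) only at the end.
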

\begin{proof}
Let $X_{n+i}:=BF(\xi_{i+1},\dots,\xi_{1}), i=0\dots,k$. One has (see \cite{so-us-16})
\[
T X_{n+k}\oplus\underline{\C}\simeq (\zeta_{k}\oplus\xi_{k+1})\overline{\zeta_{k+1}}\oplus TX_{n+k-1}.
\]
Let $y_{i}:=c_{1}(\overline{\zeta_{i}})$. Then
\begin{equation}\label{eq:technical}
s_{n+k}(X_{n+k})=\la (y_{k+1}+x_{k+1})^{n+k}+(y_{k+1}-y_{k})^{n+k},X_{n+k}\ra.
\end{equation}
I use Segre class (i.e. the multiplicatively inverse to Chern class, see \cite{so-us-16}) to compute the summands in the above expression:
\begin{multline}\label{eq:2.1}
\la (y_{k+1}+x_{k+1})^{n+k},X_{n+k}\ra=\la (1+x_{k+1})^{n+k-1}(1-y_{k})^{-1},X_{n+k-1}\ra=\\
=\la (1+x_{k+1})^{n+k-1}(1+x_{k})^{-1}(1-y_{k-1})^{-1},X_{n+k-2}\ra=\dots=\\
=\la (1+x_{k+1})^{n+k-1}(1+x_{k})^{-1}\dots(1+x_{3})^{-1}(1-y_{2})^{-1},X_{n+1}\ra=\\
=\la (1+x_{k+1})^{n+k-1}(1+x_{k})^{-1}\dots(1+x_{2})^{-1}(1+x_{1})^{-1},X^{n}\ra.
\end{multline}
\begin{multline}\label{eq:2.2}
\la (y_{k+1}-y_{k})^{n+k},X_{n+k}\ra=\la (1-y_{k})^{n+k-1}(1+x_{k+1})^{-1},X_{n+k-1}\ra=\\
=\la\sum_{i=0}^{n+k-1}\binom{n+k-1}{i}(-y_{k}^{i})(-x_{k+1})^{n+k-1-i},X_{n+k-1}\ra=\\
=(-1)^{n+k-1}\la (1+x_{k+1})^{n+k-1}(1+x_{k})^{-1}(1-y_{k})^{-1},X_{n+k-1}\ra=\dots=\\
=(-1)^{n+k-1}\la (1+x_{k+1})^{n+k-1}(1+x_{k})^{-1}\dots(1+x_{2})^{-1}(1+x_{1})^{-1},X^{n}\ra.
\end{multline}
Substituting up the identities \eqref{eq:2.1},\eqref{eq:2.2} into \eqref{eq:technical} one obtains the claim of the Proposition.
\end{proof}

\subsection{Bounded flag varieties}

Let $X=pt$, so all $\xi_{i}$ are trivial linear vector bundles, $i=1,\dots,n+1$. Then $BF(\xi_{n+1},\dots,\xi_{1})= BF(\underbrace{\underline{\C},\dots,\underline{\C}}_{n+1})$ is a bounded flag toric variety of complex dimension $n$. The Proposition \ref{pr:split} implies that $BF_{n}$ is totally normally split. There is a natural projection $BF_{n}\to BF_{n-1}$. Denote the vector bundles $\zeta_{i+1},\zeta_{i+1}^{*}\to BF_{n}$ by $\beta_{i},\beta_{i}^{*},i=0,\dots,n$ resp. The identity \eqref{eq:1.13} becomes
\begin{equation}\label{eq:1.5}
\beta_{i+1}\oplus\beta_{i+1}^{*}\simeq\beta_{i}\oplus\underline{\C},
\end{equation}
and the identity \eqref{eq:14} becomes
\begin{equation}\label{eq:2}
\beta_{i}\oplus\bigoplus_{k=1}^{i}\beta_{k}^{*}\simeq\underline{\C}^{i+1},
\end{equation}
where $i=0,\dots,n$ (see \cite{ra-86}).
\begin{cor}\label{cor:bfsplit}
$BF_{n}$ is totally normally split.
\end{cor}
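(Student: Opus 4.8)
The plan is to obtain Corollary \ref{cor:bfsplit} as an immediate specialisation of Proposition \ref{pr:split} to the case $X=pt$. First I would verify the two hypotheses of that Proposition. The base $X=pt$ is totally normally split for trivial reasons: its stable normal bundle is a trivial bundle, which is (vacuously) a Whitney sum of complex line bundles. Next, each $\xi_i\to pt$ is a trivial linear vector bundle, so its stably inverse complex vector bundle is again trivial, hence totally split. Both hypotheses being satisfied, Proposition \ref{pr:split} applies verbatim and yields that $BF(\xi_{n+1},\dots,\xi_1)=BF_n$ is totally normally split, which is exactly the assertion.

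For completeness I would also indicate the direct route, which is just the proof of Proposition \ref{pr:split} written out in the present notation. The identity \eqref{eq:2} exhibits $\bigoplus_{k=1}^{i}\beta_k^{*}$ as a totally split stable inverse to $\beta_i$, so each $\beta_i\to BF_n$ is stably normally splitting. By Lemma \ref{lm:66} the conjugates $\overline{\beta_i}$ and the line-bundle products $(\beta_{i-1}\oplus\underline{\C})\overline{\beta_i}$ occurring in the complex structure formula \eqref{eq:cstr} then also admit totally split stable inverses, and since a Whitney sum of such bundles inherits a totally split stable inverse, the stable normal bundle of $BF_n$ is totally split.

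I do not expect any genuine obstacle here: the statement is a corollary in the strict sense, and the only content is checking that a point and trivial bundles over it meet the hypotheses of Proposition \ref{pr:split}, which is immediate. The substantive work has already been carried out in Proposition \ref{pr:split} and Lemma \ref{lm:66}; this step merely records the conclusion for the toric bounded flag manifold $BF_n$, whose notation \eqref{eq:1.5}--\eqref{eq:2} has just been introduced.
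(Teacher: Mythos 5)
Your proposal is correct and matches the paper's proof exactly: the paper also deduces the Corollary directly from Proposition \ref{pr:split} applied to $X=pt$ with trivial line bundles. Your verification of the (trivially satisfied) hypotheses and the optional unwinding via \eqref{eq:2}, Lemma \ref{lm:66} and \eqref{eq:cstr} are just explicit elaborations of that same argument.
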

\begin{proof}
Follows from Proposition \ref{pr:split}.
\end{proof}

Let $e_{0},\dots,e_{n}$ and $(z_{0},\dots,z_{n})$ be a basis and the corresponding coordinates in $\C^{n+1}$, resp. Let $\C_{i}:=\C\la e_{i}\ra,i=0,\dots,n$, where $\C\la e_{i}\ra$ is the linear hull of the vector $e_{i}$. Then $BF_{n}$ is identified with the set of tuples of lines $( l_{0},\dots,l_{n}), l_{0}:=\C_{0}$, lying in $\C^{n+1}$ and satisfying the identities
\begin{equation}\label{eq:1}
l_{i+1}\subset l_{i}\oplus \C_{i+1},\ i=0,\dots,n-1.
\end{equation}
It follows that there is inclusion
\begin{equation}\label{eq:incl}
l_{i}\subset\C\la e_{0},\dots, e_{i}\ra
\end{equation}
for $i=0,\dots,n$. Thus, $BF_{n}$ can be considered as the subvariety of $\prod_{i=1}^{n}\C P^{i}$ given by the (algebraic) conditions \eqref{eq:1}. Let $[z_{i,0}:\dots:z_{i,i}]$ be the homogeneous coordinates of the $i$-th factor of $\prod_{i=1}^{n}\C P^{i}$. Then the standard linear algebra theorem implies that the conditions \eqref{eq:1} are exactly the vanishing equations for all $2\times 2$-minors of the matrices:
\begin{equation}\label{eq:bfeq}
\rk
\left(
\begin{matrix}
z_{i+1,0} & z_{i+1,1} & \dots & z_{i+1,i}\\
z_{i,0} & z_{i,1} & \dots & z_{i,i}
\end{matrix}
\right)=1,
\end{equation}
where $i=1,\dots,n-1$.

Let $f_{n}$ be the map
\begin{equation}\label{eq:fi}
f_{n}:\ BF_{n}\to\C P^{n},\ (l_{0},\dots, l_{n})\mapsto l_{n},
\end{equation}
i.e. the restriction of the projection morphism $\prod_{i=1}^{n}\C P^{i}\to \C P^{n}$ to $BF_{n}$. Then
\begin{equation}\label{eq:2.4}
f_{n}^{*}\eta_{n}=\beta_{n},
\end{equation}
where $\eta_{n}$ is the tautological line bundle over $\C P^{n}$. One can show that $f_{n}$ is a composition of sequential blow-ups at the strict transforms of the projective subspaces $\lb z_{0}=\dots=z_{n-1}=0\rb\subset\dots\subset\lb z_{0}=z_{1}=0\rb\subset\C P^{n}$ given in the homogeneous coordinates $[z_{0}:\dots:z_{n}]$ of $\C P^{n}$.
\begin{rem}
The above description of the bounded flag variety differs from the standard one (cf. \cite{bu-ra-98}, \cite[p. 292]{bu-pa-15}) which is denoted by $BF'_{n}$. However, these varieties are isomorphic. Firstly, one has to change the basis of $\C^{n+1}$ to $\lb v_{1},\dots,v_{n+1}\rb,v_{i} = e_{n+1-i}, i=0,\dots,n$. Then \eqref{eq:1} becomes
\[
l_{i+1}\subset l_{i}\oplus \C\la v_{n-i}\ra
\]
for $i=0,\dots,n$. Secondly, the change of the order of the lines $L_{i}=l_{n-i+1}$ leads to
\[
L_{n-i}\subset L_{n-i+1}\oplus \C\la v_{n-i}\ra
\]
for $i=0,\dots,n$. It remains to substitute the parameter $i=j-1$ in the above to obtain the definition of $BF_{n}'$.
\end{rem}



\section{Constructing generators of $\Omega^{*}_{U}$}\label{sec:gens}

In this Section I explain why the known generators of the unitary cobordism ring $\Omega^{*}_{U}$ are inappropriate for the proof of Theorem \ref{thm:main}. I introduce some non-singular projective toric varieties $X_{i,j}, 0\leq i\leq j$, of complex dimension $i+j$. These are bounded flag fibre bundles over $BF_{i}$ (hence, Bott towers). $X_{i,j}$ are totally normally split. However, $X_{i,j}$ lack the necessary properties of their respective Milnor numbers. A construction of the modification in the class of stably normally splitting toric varieties is then given. It is applied to $X_{i,j}$ to obtain the manifolds $M_{i,j}$ which are (together with $BF_{i+j}$) shown in the following Sections to be the quasigenerators of $\Omega^{U}_{*}$ having all the necessary properties. The pullback of the vector bundle $\xi\to Y$ under the natural projection map $X\times Y\to Y$ is denoted below with a prime, $\xi'\to X\times Y$.

\subsection{Milnor hypersurfaces}

Remind that Milnor hypersurface $H_{i,j}, 0\leq i\leq j,$ is the $(1,1)$-bidegree hypersurface of $\C P^{i}\times \C P^{j}$ (of complex dimension $i+j-1$) given by the equation
\begin{equation}\label{eq:mi}
\sum_{k=0}^{i} z_{k}w_{k}=0
\end{equation}
in the homogeneous coordinates $[z_{0}:\dots: z_{i}], [w_{0}:\dots: w_{j}]$ of $\C P^{i}$ and $\C P^{j}$, respectively. (Notice, that $H_{0,0}=\varnothing$.) 
The fiber of the tautological line bundle $\eta_{i}\to\C P^{i}$ over $[l]\in\C P^{i}, l\subset V$, consists by definition of all vectors $v\in l$. Let $\eta_{i}^{*}$ be the vector bundle over $\C P^{i}$ with the fiber over $[l]\in\C P^{i}, l\subset V$, consisting of the vectors from the orthogonal complement $l^\perp$ in $\C^{i+1}$. One has $\rk \eta_{i}^{*}=i$ and $\eta_{i}\oplus\eta_{i}^{*}=\underline{\C}^{i+1}$.

\begin{pr}\label{pr:mh}
The restriction of the projection map $\C P^{i}\times \C P^{j}\to \C P^{i}$ induces the fibre bundle structure
\[
H_{i,j}=\P (\overline{\eta_{i}^{*}}\oplus \underline{\C}^{j-i})\to \C P^{i}.
\]
\end{pr}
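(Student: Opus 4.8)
The plan is to analyse the projection $p : H_{i,j} \to \C P^{i}$ fibrewise and then recognise the resulting family of projective spaces as the projectivisation of an explicit rank-$j$ subbundle of the trivial bundle $\underline{\C}^{j+1}$ over $\C P^{i}$.

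First I would fix a point $[z]=[z_{0}:\dots:z_{i}]\in\C P^{i}$ corresponding to the line $l=\C\la z\ra\subset\C^{i+1}$ and describe $p^{-1}([z])$. A point $([z],[w])\in\C P^{i}\times\C P^{j}$ lies on $H_{i,j}$ precisely when $w=(w_{0},\dots,w_{j})\in\C^{j+1}$ lies in the kernel of the linear functional $\phi_{z}:\C^{j+1}\to\C$, $w\mapsto\sum_{k=0}^{i}z_{k}w_{k}$. Since $z\neq 0$ forces $\phi_{z}\neq 0$, this functional has rank $1$ for every $[z]$, so $\ker\phi_{z}$ has constant dimension $j$ and $p^{-1}([z])=\P(\ker\phi_{z})\cong\C P^{j-1}$; this already shows $p$ is surjective with the expected fibre dimension $(i+j-1)-i=j-1$.

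Second, I would identify the family $\lb\ker\phi_{z}\rb$ with $\overline{\eta_{i}^{*}}\oplus\underline{\C}^{j-i}$. Splitting $\C^{j+1}=\C^{i+1}\oplus\C^{j-i}$ and noting that $\phi_{z}$ involves only the first $i+1$ coordinates gives $\ker\phi_{z}=K_{z}\oplus\C^{j-i}$ with $K_{z}=\lb w'\in\C^{i+1}:\sum_{k=0}^{i}z_{k}w'_{k}=0\rb$; the trivial summand $\C^{j-i}$ accounts for $\underline{\C}^{j-i}$. For the other summand I would check that complex conjugation $v\mapsto\overline{v}$ carries $l^{\perp}$ (the Hermitian orthogonal complement, i.e.\ the fibre of $\eta_{i}^{*}$) bijectively onto $K_{z}$, because $\sum_{k}v_{k}\overline{z_{k}}=0$ is equivalent to $\sum_{k}z_{k}\overline{v_{k}}=0$. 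As $v\mapsto\overline{v}$ is conjugate-linear, it realises the standard identification of the conjugate bundle, so the subbundle with fibre $K_{z}$ is precisely $\overline{\eta_{i}^{*}}$. To upgrade this from fibres to bundles I would note that $\phi_{z}$ varies algebraically with $z$ and has constant rank, whence $\lb\ker\phi_{z}\rb$ is an algebraic subbundle of $\underline{\C}^{j+1}$ and the fibrewise identity becomes a bundle isomorphism.

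Finally, with $\mc E:=\overline{\eta_{i}^{*}}\oplus\underline{\C}^{j-i}\subset\underline{\C}^{j+1}$ established, the projectivisation $\P(\mc E)\subset\P(\underline{\C}^{j+1})=\C P^{i}\times\C P^{j}$ is exactly the locus $\lb([z],[w]):w\in\ker\phi_{z}\rb=H_{i,j}$, and its bundle projection is the restriction of the first projection, giving the claim. The main subtlety — the step I expect to require the most care — is the appearance of the \emph{conjugate} bundle $\overline{\eta_{i}^{*}}$ rather than $\eta_{i}^{*}$ itself: it arises because the defining equation $\sum z_{k}w_{k}=0$ is $\C$-bilinear in $(z,w)$, whereas $\eta_{i}^{*}$ is defined through the Hermitian inner product, so passing between the two forces a complex conjugation. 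Keeping track of this conjugation consistently, and confirming that it is the standard isomorphism with the conjugate bundle rather than merely an abstract conjugate-linear map, is where the argument must be watched most closely.
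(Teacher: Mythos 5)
Your proposal is correct and follows essentially the same route as the paper's proof: fix $[z]\in\C P^{i}$, split the $w$-coordinates into the $j-i$ free ones and the $i+1$ constrained ones, and identify the constrained part of the fibre with the complex conjugate of the fibre $l^{\perp}$ of $\eta_{i}^{*}$, so that the fibre over $[z]$ is $\P\bigl(\overline{l^{\perp}}\oplus\C^{j-i}\bigr)$. Your write-up is in fact more careful than the paper's (which argues purely fibrewise and is terse about both the conjugation and the passage from fibres to a bundle isomorphism), but it is the same argument, not a different one.
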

\begin{Proof}
Consider a point $(z,w)=([z_{0}:\dots: z_{i}], [w_{0}:\dots: w_{j}])\in H_{i,j}$. The fiber of the map $\pi_{i,j}$ at a point $\pi(z,w)=z$ consists of the points $[w_{0}:\dots: w_{j}]$ with arbitrary values of $w_{k}, k=0,\dots, j-i-1$. The values of $w_{k}, k=j-i,\dots, j$, are such that the vector $(w_{j-i},\dots,w_{j})\in(\C^{i+1})^{\times}$ is non-zero and belongs to the conjugate vector space to the orthogonal complement of the line $[z]\subset \C^{i+1}$. Hence, the fiber of the projection map at over $z$ is identified with the corresponding fiber of the desired projective bundle.
\end{Proof}
The Milnor numbers of $H_{i,j}$ and complex projective spaces are easily computed. This leads to
\begin{pr}[\cite{mi-65}]\label{pr:qgmi}
$H_{i,j},0\leq i\leq j,i+j=n+1$, constitute a family of multiplicative generators of the ring $\Omega^{*}_{U}$ in degree $2n$.
\end{pr}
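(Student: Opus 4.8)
The plan is to use the classical Milnor criterion for multiplicative generators of $\Omega^*_U$, which states that a $2n$-dimensional family generates $\Omega^*_U$ in degree $2n$ precisely when the greatest common divisor of their Milnor numbers $s_n$ equals $p$ if $n+1 = p^s$ for a prime $p$, and equals $1$ otherwise. So the entire proof reduces to computing $s_{n}(H_{i,j})$ for all $i+j = n+1$ together with $s_n(\C P^n)$, and checking that the set of these integers attains the required gcd. First I would recall that $s_n(\C P^n) = n+1$, which handles one end of the family.

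Next I would compute $s_n(H_{i,j})$. Since Proposition \ref{pr:mh} exhibits $H_{i,j} = \P(\overline{\eta_i^*}\oplus\underline{\C}^{j-i})\to\C P^i$ as a projective bundle, I can read off the tangent bundle of $H_{i,j}$ via the standard splitting formula for projectivisations (the same type of formula as \eqref{eq:cstr}), express the total Chern class in terms of the hyperplane classes of the two projective factors, and extract the $s_n$-characteristic number by pairing $t_1^n + \cdots$ with the fundamental class. Equivalently, and more cleanly, one computes $s_n$ directly from the defining $(1,1)$-hypersurface in $\C P^i\times\C P^j$ using the adjunction/normal-bundle exact sequence: if $u,v$ are the hyperplane classes, then $T H_{i,j}$ sits in the sequence relating $T(\C P^i\times\C P^j)|_{H}$, $TH_{i,j}$, and the normal line bundle with first Chern class $u+v$. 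The well-known outcome of this calculation is
\[
s_n(H_{i,j}) = -\binom{n+1}{i}, \qquad i+j = n+1,\ i\geq 1,
\]
with the sign and binomial coefficient coming from the Chern-root bookkeeping; this is exactly the computation attributed to Milnor in \cite{mi-65}.

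The arithmetic heart of the argument is then to verify that $\gcd$ of the numbers $\{\,n+1\,\}\cup\{\binom{n+1}{i} : 1\leq i\leq n/2\,\}$ equals $p$ when $n+1=p^s$ and equals $1$ otherwise. For the prime-power case I would invoke the standard fact that $p \mid \binom{p^s}{i}$ for all $0 < i < p^s$ while $\binom{p^s}{1} = p^s$ is not divisible by $p^2$ only after noting $s_n(\C P^n)=n+1=p^s$; combined with a single binomial coefficient whose $p$-adic valuation is exactly $1$ (for instance $\binom{p^s}{p^{s-1}}$, handled by Kummer's theorem on carries), this pins the gcd to $p$. For the non-prime-power case, I would use that $n+1$ has at least two distinct prime factors and apply Kummer's theorem to produce, for each prime $q\mid(n+1)$, some index $i$ with $q\nmid\binom{n+1}{i}$, forcing the gcd down to $1$.

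The main obstacle is the number-theoretic gcd computation rather than the geometry: the projective-bundle description in Proposition \ref{pr:mh} makes the Chern-class calculation of $s_n(H_{i,j})$ essentially mechanical, but assembling the divisibility facts about binomial coefficients into the exact Milnor gcd criterion requires care, particularly in producing, in the prime-power case, an explicit coefficient of $p$-adic valuation exactly one so that no larger power of $p$ survives as a common factor. Once that arithmetic is in place, Milnor's criterion yields the claim immediately.
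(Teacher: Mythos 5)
Your proposal follows the same route the paper implicitly relies on: the paper gives no proof of this Proposition at all, citing Milnor, and its lead-in sentence (``The Milnor numbers of $H_{i,j}$ and complex projective spaces are easily computed'') points exactly to the argument you reconstruct, namely the gcd criterion for multiplicative generators (the paper's reference for it is \cite[Theorem 9.1.8]{bu-pa-15}) combined with the adjunction computation of $s_n$ and binomial arithmetic via Kummer's theorem. In outline your argument is the classical one and it does prove the statement.

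One intermediate formula you assert is wrong, though the error turns out to be harmless. The identity $s_n(H_{i,j})=-\binom{n+1}{i}$ holds only for $i\geq 2$; for $i=1<j$ one has $s_{j}(H_{1,j})=0$. Indeed, writing $u,v$ for the hyperplane classes and $h=u+v$, the adjunction computation gives $s_n(H_{i,j})=\langle\bigl((i+1)u^{n}+(j+1)v^{n}-h^{n}\bigr)h,[\C P^{i}\times\C P^{j}]\rangle$ with $n=i+j-1$; when $i=1$ the term $(j+1)v^{n}=(j+1)v^{j}$ does \emph{not} vanish and exactly cancels $\langle h^{n+1},[\C P^{1}\times\C P^{j}]\rangle=\binom{j+1}{1}$. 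A sanity check: $H_{1,2}$ is $\C P^{2}$ blown up at a point, so $s_{2}=c_{1}^{2}-2c_{2}=8-8=0$. The correct table is therefore $s_n(H_{0,n+1})=n+1$, $s_n(H_{1,n})=0$ (for $n\geq 2$; and $s_1(H_{1,1})=2$), and $s_n(H_{i,j})=-\binom{n+1}{i}$ for $i\geq 2$. Your gcd argument survives this correction because the value $\binom{n+1}{1}=n+1$ is supplied by $H_{0,n+1}=\C P^{n}$ rather than by $H_{1,n}$, and every index your Kummer argument produces for a prime $q\mid n+1$ automatically satisfies $2\leq i\leq n-1$ (since $q$ divides $\binom{n+1}{1}=\binom{n+1}{n}$); likewise in the prime-power case the distinguished index $p^{s-1}$ is at least $2$ whenever $s\geq 2$, while for $s=1$ the value $p$ comes from $\C P^{p-1}$ itself. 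With that repair your write-up is a complete proof of the Proposition.
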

Notice that $H_{0,j}=\C P^{j-1}$ and $H_{1,j}=\P (\eta_{1}\oplus \underline{\C}^{j-1})\to \C P^{1}$ are toric varieties. In fact, this is the only case for Milnor hypersurfaces. (The cohomology ring $H^{*}(H_{i,j};\Z)$ of $H_{i,j}$ is the obstruction to be a toric variety.)
\begin{pr}\normalfont{(\cite[Theorem 9.1.5]{bu-pa-15})}
Let $1\leq i\leq j;\ 2<j$. Then $H_{i,j}$ is not a toric variety.
\end{pr}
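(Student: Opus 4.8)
The plan is to compute the integral cohomology ring of $H_{i,j}$ explicitly and then to show that it cannot be the cohomology ring of a smooth complete toric variety, by exploiting the rigid shape of such rings when the second Betti number equals $2$.

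First I would read off $H^{*}(H_{i,j};\Z)$ from the projective bundle structure of Proposition \ref{pr:mh}. Writing $a\in H^{2}(\C P^{i})$ for the hyperplane class and $u$ for the first Chern class of the relative line bundle $\mathcal O(1)$ on $\P(\overline{\eta_{i}^{*}}\oplus\underline{\C}^{j-i})$, the identity $\eta_{i}\oplus\eta_{i}^{*}\simeq\underline{\C}^{i+1}$ together with conjugation gives the total Chern class $(1+a)^{-1}=\sum_{k=0}^{i}(-1)^{k}a^{k}$, so the projective bundle formula yields
\[
H^{*}(H_{i,j};\Z)\simeq\Z[a,u]/(a^{i+1},\,f),\qquad f:=\sum_{k=0}^{i}(-1)^{k}a^{k}u^{j-k}.
\]
In particular $H^{2}$ has rank $2$, and the ring is a graded complete intersection of socle degree $n=i+j-1$ with relation degrees $i+1$ and $j$.

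Next I would invoke the classification of smooth complete projective toric varieties with second Betti number $2$: each such $X$ is a projectivisation $\P(\bigoplus_{k}\mathcal O(a_{k}))\to\C P^{s}$ of a sum of line bundles over a projective space, so that
\[
H^{*}(X;\Q)\simeq\Q[x,y]/\bigl(x^{s+1},\ \textstyle\prod_{k}(y-a_{k}x)\bigr),
\]
and, after a rational change of the degree-$2$ generators, the relation ideal is generated by two elements each of which is a \emph{product of linear forms}. If $H_{i,j}$ were toric, a graded isomorphism would identify $\{a,u\}$ with another $\Q$-basis of $H^{2}$ in which $(a^{i+1},f)$ is generated by two products of $\Q$-linear forms; comparing Hilbert series forces their degrees to be again $i+1$ and $j$. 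A short bookkeeping argument in each of the cases $i+1<j$, $i+1=j$, $i=j$ then shows that the degree-$j$ generator has nonzero $u^{j}$-coefficient and, up to a nonzero scalar and modulo the ideal $(a^{i+1})$, coincides with $f$ itself.

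The crux is then a one-line positivity obstruction. Normalising this degree-$j$ product of linear forms (its $u^{j}$-coefficient being nonzero) as $\prod_{s=1}^{j}(u+\gamma_{s}a)$ with $\gamma_{s}\in\Q$, and comparing the coefficient of $u^{j-m}a^{m}$ with that of $f$ for $m\le i$, gives $e_{m}(\gamma)=(-1)^{m}$; in particular $e_{1}(\gamma)=-1$ and, since $i\ge 2$, also $e_{2}(\gamma)=1$, whence
\[
\sum_{s=1}^{j}\gamma_{s}^{2}=e_{1}(\gamma)^{2}-2e_{2}(\gamma)=1-2=-1<0,
\]
which is impossible for real numbers $\gamma_{s}$. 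This contradiction shows $H_{i,j}$ is not toric. I expect the main obstacle to be not this final inequality but the reduction preceding it: establishing in full the ``two products of linear forms'' normal form for Picard-number-two toric cohomology and verifying that the degree-$j$ generator must reduce to $f$. I also note that the appearance of the hypothesis $i\ge 2$ (which we must assume here, in view of the toricity of $H_{1,j}$ recorded above) is precisely what singles out $H_{1,j}$ as the exceptional toric case.
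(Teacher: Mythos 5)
Your proof is correct and takes essentially the same route as the source the paper relies on for this statement (the paper itself offers no argument beyond the reference to \cite{bu-pa-15} and the remark that $H^{*}(H_{i,j};\Z)$ is the obstruction): the projective-bundle presentation $\Z[a,u]/(a^{i+1},f)$, the Picard-number-two (Kleinschmidt) normal form forcing the relation ideal of a smooth complete toric variety to be generated by two products of rational linear forms, and the impossibility of factoring $f$ that way. I checked that your compressed case analysis does close: in each of the cases $i+1<j$, $i+1=j$, $i=j$, one of the two product-of-linear-forms generators has degree $j$ and the form $cf+a^{i+1}h$ with $c\neq 0$, so the coefficient comparison $e_{m}(\gamma)=(-1)^{m}$ for $m\le i$ and the inequality $\sum_{s}\gamma_{s}^{2}=e_{1}^{2}-2e_{2}=-1<0$ apply verbatim; and you are right that the hypothesis must be read as $2\le i\le j$, since as printed ($1\le i$, $2<j$) the statement would contradict the paper's own observation that $H_{1,j}$ is a toric variety.
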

A well-known construction in unitary cobordism theory is the dualization $D$ of the complex linear vector bundle $\xi\to X$ over a complex compact manifold (see \cite[p.78]{st-68},\cite{th-54}). $D$ is a certain stably complex submanifold of $X$ of (real) codimension $2$. The normal complex linear vector bundle of the inclusion $D\subset X$ coincides with the restriction of $\xi$ to $D$. In topological terms, the Poincar\'e duality sends the first Chern class $c_{1}(\xi)\in H^{2}(X;\Z)$ to a certain (real) codimension $2$ subvariety $D$ of $X$ representing the dual cycle $[D]\in H_{2(n-1)}(X;Z)$. (Also this construction can be formulated in the category of algebraic varieties in terms of the cycle map $Pic(X)\to Cl(X)$.)
\begin{pr}\normalfont{(\cite{mi-65},\cite[Proposition D.6.3]{bu-pa-15})}
$H_{i,j}$ is the dualization of the complex linear vector bundle $\overline{\eta_{i}}\overline{\eta'_{j}}$.
\end{pr}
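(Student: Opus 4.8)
The plan is to realise $H_{i,j}$ directly as the transverse zero locus of an explicit section of a smooth model of $\overline{\eta_{i}}\overline{\eta'_{j}}$ over $\C P^{i}\times\C P^{j}$, and then to check that the stably complex structure which the dualization construction places on that zero locus is exactly the one $H_{i,j}$ carries as a complex submanifold. Since dualization is a smooth construction depending only on the isomorphism class of the line bundle, and any section transverse to the zero section produces a representative, it suffices to produce one convenient transverse section.

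First I would make the line bundle explicit. A Hermitian metric gives the smooth isomorphism $\overline{L}\simeq L^{*}$ for every complex line bundle $L$; applied to the tautological bundles this yields $\overline{\eta_{i}}\simeq\eta_{i}^{*}$ and $\overline{\eta'_{j}}\simeq(\eta'_{j})^{*}$, hence a smooth isomorphism
\[
\overline{\eta_{i}}\,\overline{\eta'_{j}}\simeq\eta_{i}^{*}(\eta'_{j})^{*},
\]
which is the bundle $O(1,1)$ on $\C P^{i}\times\C P^{j}$ whose holomorphic sections are the bilinear forms $\sum_{k,l}c_{kl}z_{k}w_{l}$. In particular the form $s=\sum_{k=0}^{i}z_{k}w_{k}$ defining $H_{i,j}$ in \eqref{eq:mi} is such a section with zero locus $H_{i,j}$. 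Moreover $c_{1}(\overline{\eta_{i}}\,\overline{\eta'_{j}})=-c_{1}(\eta_{i})-c_{1}(\eta'_{j})$ is the bidegree $(1,1)$ class, which equals the divisor class of $H_{i,j}$; so the homological requirement that $[H_{i,j}]$ be Poincar\'e dual to $c_{1}(\overline{\eta_{i}}\,\overline{\eta'_{j}})$ is immediate.

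Second I would verify transversality. The partial derivatives of $s$ with respect to the second group of homogeneous coordinates are $\partial s/\partial w_{k}=z_{k}$ for $k\leq i$. Since a point $[z_{0}:\dots:z_{i}]\in\C P^{i}$ always has some $z_{k}\neq0$, the Jacobian of $s$ never vanishes on $\{s=0\}$, so $s$ is a regular section and $H_{i,j}$ is smooth of the expected real codimension $2$. Consequently the normal bundle of the inclusion $H_{i,j}\subset\C P^{i}\times\C P^{j}$ is $(\eta_{i}^{*}(\eta'_{j})^{*})|_{H_{i,j}}\simeq\overline{\eta_{i}}\,\overline{\eta'_{j}}|_{H_{i,j}}$, precisely the bundle prescribed by the dualization of $\overline{\eta_{i}}\,\overline{\eta'_{j}}$.

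Finally I would match the stably complex structures. The dualization $D$ of a line bundle $\xi$ is equipped with $TD\oplus\xi|_{D}\simeq TX|_{D}$, i.e. $TD\simeq TX|_{D}\ominus\xi|_{D}$ stably, whereas $H_{i,j}$, being cut out by the holomorphic section $s$, satisfies $TH_{i,j}\simeq T(\C P^{i}\times\C P^{j})|_{H_{i,j}}\ominus(\eta_{i}^{*}(\eta'_{j})^{*})|_{H_{i,j}}$; under the isomorphism of the second step these coincide, giving $H_{i,j}$ as the dualization of $\overline{\eta_{i}}\,\overline{\eta'_{j}}$. The step I would treat most carefully is exactly this last identification: one must confirm that it is the \emph{conjugate} bundle $\overline{\eta_{i}}\,\overline{\eta'_{j}}$ (with $c_{1}=+a_{i}+a_{j}$), and not $\eta_{i}\eta'_{j}$, that governs the normal direction, so that the orientation and normal-bundle conventions of the dualization agree with those of the complex hypersurface. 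The metric isomorphism $\overline{L}\simeq L^{*}$ is what guarantees this sign, and hence the entire argument.
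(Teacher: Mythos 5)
Your proposal is correct and is exactly the standard argument: realising $H_{i,j}$ as the transverse zero locus of the section $\sum_{k=0}^{i}z_{k}w_{k}$ of $\eta_{i}^{*}(\eta'_{j})^{*}\simeq\overline{\eta_{i}}\,\overline{\eta'_{j}}$, identifying the normal bundle with the restriction of this line bundle, and matching the adjunction stably complex structure with the one prescribed by dualization. The paper gives no proof of its own here --- it cites \cite{mi-65} and \cite[Proposition D.6.3]{bu-pa-15} --- and your argument coincides with the proof found in those references, so there is nothing to correct.
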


\subsection{Buchstaber-Ray varieties}
In \cite{bu-ra-98}, V.M.~Buchstaber and N.~Ray introduced smooth projective toric varieties $BR_{i,j}$ to construct multiplicative generators of the unitary cobordism ring $\Omega^{*}_{U}$. (The original denotation for these manifolds from \cite{bu-ra-98} is $B_{i,j}$. I replace it to avoid confusion with the denotation of \cite{ra-86}.)

\begin{definition}[See \cite{bu-ra-98}]\label{def:brij}
Let $0\leq i\leq j$. Then $BR_{i,j}=\P (f_{i}^{*}\overline{\eta_{i}^{*}}\oplus\underline{\C}^{j-i})\to BF_{i}$ is the pullback of $H_{i,j}=\P (\overline{\eta_{i}^{*}}\oplus \underline{\C}^{j-i})\to \C P^{i}$ under the map $f_{i}:BF_{i}\to\C P^{i}$. In particular, $BR_{0,j}=\C P^{j-1}$. (Notice, that $BR_{0,0}=\varnothing$.)
\end{definition}

\begin{pr}
$BR_{i,j}\subset BF_{i}\times \C P^{j}$ is a dualization of the linear vector bundle $\overline{\beta_{i}}\overline{\eta'_{j}}$.
\end{pr}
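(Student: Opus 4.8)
The plan is to realise $BR_{i,j}$ as the transverse zero locus of a tautological section of the line bundle $\overline{\beta_i}\,\overline{\eta'_j}$ over $BF_i\times\C P^j$, and then to read off the three defining pieces of dualization data (real codimension $2$, normal bundle, and Poincar\'e dual class) directly from that description. Throughout I identify $BF_i\times\C P^j$ with the trivial projective bundle $\P(\underline{\C}^{j+1})\to BF_i$, whose relative tautological line bundle is $\eta'_j$, the pullback of $\eta_j$ from the $\C P^j$-factor.

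First I would recast Proposition \ref{pr:mh} as a statement about an exact sequence of bundles over $\C P^i$. The description $H_{i,j}=\P(\overline{\eta_i^{*}}\oplus\underline{\C}^{j-i})$ exhibits $\overline{\eta_i^{*}}\oplus\underline{\C}^{j-i}$ as a corank-one subbundle of $\underline{\C}^{j+1}$: since $\overline{\eta_i^{*}}\oplus\overline{\eta_i}=\overline{\underline{\C}^{i+1}}=\underline{\C}^{i+1}$, the quotient of the inclusion $\overline{\eta_i^{*}}\oplus\underline{\C}^{j-i}\hookrightarrow\underline{\C}^{j+1}$ is the line bundle $\overline{\eta_i}$. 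Let $\phi\colon\underline{\C}^{j+1}\twoheadrightarrow\overline{\eta_i}$ denote this surjection; it is the bundle map whose fibrewise kernel encodes the (conjugate) orthogonality condition from the proof of Proposition \ref{pr:mh}, so that $H_{i,j}=\P(\ker\phi)$ is the dualization of $\overline{\eta_i}\,\overline{\eta'_j}$.

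Next I would pull this sequence back along $f_i\colon BF_i\to\C P^i$. Because pullback preserves short exact sequences of vector bundles, and $f_i^{*}\eta_i=\beta_i$ by \eqref{eq:2.4}, applying $f_i^{*}$ yields a surjection $f_i^{*}\phi\colon\underline{\C}^{j+1}\twoheadrightarrow\overline{\beta_i}$ of bundles over $BF_i$ with kernel $f_i^{*}\overline{\eta_i^{*}}\oplus\underline{\C}^{j-i}$. By Definition \ref{def:brij} this kernel is exactly the bundle whose projectivisation is $BR_{i,j}$, so under the embedding $BR_{i,j}=\P(\ker f_i^{*}\phi)\hookrightarrow\P(\underline{\C}^{j+1})=BF_i\times\C P^j$ the variety $BR_{i,j}$ is the locus of lines $\ell\subset\underline{\C}^{j+1}$ with $\phi(\ell)=0$. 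Composing the tautological inclusion $\eta'_j\hookrightarrow\underline{\C}^{j+1}$ with $f_i^{*}\phi$ then produces a section $\sigma\in\Gamma(\mathrm{Hom}(\eta'_j,\overline{\beta_i}))=\Gamma(\overline{\eta'_j}\,\overline{\beta_i})$ whose zero locus is precisely $BR_{i,j}$.

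Finally I would verify transversality and extract the dualization data. Since $f_i^{*}\phi$ is a surjective bundle map, on each $\C P^j$-fibre $\sigma$ restricts to a nonzero relative linear form and hence vanishes transversally along a hyperplane; thus $\sigma$ is transverse to the zero section, and $BR_{i,j}$ is a smooth complex hypersurface of $BF_i\times\C P^j$ with normal bundle $(\overline{\beta_i}\,\overline{\eta'_j})|_{BR_{i,j}}$ and $[BR_{i,j}]$ Poincar\'e dual to $c_1(\overline{\beta_i}\,\overline{\eta'_j})$, the stably complex structure as a complex divisor agreeing via $T(BR_{i,j})\oplus N=T(BF_i\times\C P^j)|_{BR_{i,j}}$ with that of the dualization. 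The only genuinely delicate point is the one just flagged: a pointwise transversality check would involve $df_i$ and would fail over the exceptional loci of the blow-ups comprising $f_i$, so the argument must be organised around the pulled-back \emph{surjection of bundles}, where fibrewise surjectivity makes transversality automatic; the rest is bookkeeping of the identifications $\overline{\eta}\cong\eta^{*}$.
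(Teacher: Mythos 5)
Your proof is correct, but it takes a genuinely different route from the paper's. The paper's own argument is a two-line pullback: it quotes the previously stated fact that $H_{i,j}\subset\C P^{i}\times\C P^{j}$ is the dualization of $\overline{\eta_{i}}\overline{\eta'_{j}}$ (so its normal bundle is the restriction of that line bundle), observes that $BR_{i,j}$ is the preimage of $H_{i,j}$ under $f_{i}\times Id_{j}$, and concludes that the normal bundle of $BR_{i,j}\subset BF_{i}\times\C P^{j}$ is the restriction of $(f_{i}^{*}\overline{\eta_{i}})\overline{\eta'_{j}}=\overline{\beta_{i}}\overline{\eta'_{j}}$ via \eqref{eq:2.4}. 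You instead rebuild the dualization from scratch: you package Proposition \ref{pr:mh} as a bundle surjection $\phi:\underline{\C}^{j+1}\twoheadrightarrow\overline{\eta_{i}}$, pull it back along $f_{i}$, and exhibit $BR_{i,j}$ as the transverse zero locus of the induced section of $\overline{\beta_{i}}\overline{\eta'_{j}}$ --- which is exactly the section $\sum_{k}z_{i,k}w_{k}$ appearing in \eqref{eq:br}. Your version is longer but more self-contained: it reproves the Milnor-hypersurface case rather than citing it, it verifies both pieces of the dualization data (the normal bundle \emph{and} the Poincar\'e dual class) rather than only the normal bundle, and the transversality argument is sound, since fibrewise transversality of a section at a zero implies transversality of the full section there.

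One point to correct: your closing claim that a pointwise transversality check ``would fail over the exceptional loci of the blow-ups comprising $f_{i}$'' is mistaken. The map $f_{i}\times Id_{j}$ is transverse to $H_{i,j}$ everywhere, no matter where $df_{i}$ degenerates: because $H_{i,j}\to\C P^{i}$ is a submersion, any tangent vector of $\C P^{i}\times\C P^{j}$ can be corrected by a vector tangent to $H_{i,j}$ so as to lie in the $\C P^{j}$-factor, and that factor is entirely in the image of $d(f_{i}\times Id_{j})$. So the classical preimage-of-a-transverse-submanifold argument --- which is essentially what the paper's proof uses implicitly --- goes through at every point; your bundle-level reformulation is a clean way to organise the proof, not a necessary repair of a failing one.
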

\begin{Proof}
Remind that the normal bundle of $H_{i,j}\subset\C P^{i}\times\C P^{j}$ is equal to the corresponding restriction of $\overline{\eta_{i}}\overline{\eta'_{j}}$. Hence, by the identity \eqref{eq:2.4} the normal bundle of $BR_{i,j}\subset BF_{i}\times\C P^{j}$ equals to the restriction of $(f_{i}^{*}\overline{\eta_{i}})\overline{\eta'_{j}}=\overline{\beta_{i}}\overline{\eta'_{j}}$. Q.E.D.
\end{Proof}
$BR_{i,j}$ is the preimage of the Milnor hypersurface $H_{i,j}\subset\C P^{i}\times\C P^{j}$ under the map $f_{i}\times Id_{j}:BF_{i}\times\C P^{j}\to \C P^{i}\times\C P^{j}$. Hence, $BR_{i,j}$ is the hypersurface in $BF_{i}\times \C P^{j}$ given by the equation:
\begin{equation}\label{eq:br}
\sum_{k=0}^{i} z_{i,k}w_{k}=0,
\end{equation}
where $[w_{0}:\dots:w_{j}]\in\C P^{j}$ are the homogeneous coordinates on $\C P^{j}$ and $z_{k,l}$ are the coordinates on $BF_{i}$. There is a map (birational morphism) $BR_{i,j}\to H_{i,j}$ which is a restriction of $(f_{i}, Id_{j}):\ BF_{i}\times \C P^{j}\to \C P^{i}\times \C P^{j}$ and has degree $1$. Using the properties of the Milnor number $s_{i+j-1}$ (\cite[Theorem 9.1.8]{bu-pa-15}) and Proposition \ref{pr:qgmi} one justifies

\begin{pr}
\[
s_{i+j-1}(BR_{i,j})=s_{i+j-1}(H_{i,j}).
\]
The varieties $BR_{i,j},0\leq i\leq j;\ i+j=n+1$, are multiplicative generators of the ring $\Omega^{*}_{U}$ in degree $2n$.
\end{pr}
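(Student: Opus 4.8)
The plan is to establish the displayed equality of Milnor numbers first, and then to read off the generation statement from the characterisation of generators via $s_{n}$ together with Proposition \ref{pr:qgmi}. For the equality I would work with the degree-$1$ birational morphism $g=(f_{i}\times Id_{j})|_{BR_{i,j}}\colon BR_{i,j}\to H_{i,j}$ produced just above, and set $n:=i+j-1=\dim_{\C}BR_{i,j}=\dim_{\C}H_{i,j}$. The point is that, both varieties being dualizations of line bundles, their stable tangent bundles are governed by the same normal data. Concretely, I would first record that $g$ pulls the normal bundle $\ol{\eta_{i}}\,\ol{\eta'_{j}}$ of $H_{i,j}$ back to the normal bundle $\ol{\beta_{i}}\,\ol{\eta'_{j}}$ of $BR_{i,j}$, using \eqref{eq:2.4}; comparing the two adjunction descriptions $TM\oplus\nu_{M}\simeq TX|_{M}$ of the stable tangent bundles then gives, in reduced $K$-theory,
\[
TBR_{i,j}-g^{*}TH_{i,j}\simeq\bigl(TBF_{i}-f_{i}^{*}T\C P^{i}\bigr)\big|_{BR_{i,j}},
\]
a class pulled back from $BF_{i}$, since the fibre directions along $\C P^{j}$ and the two normal bundles cancel.

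Writing the power-sum class $s_{n}(TX)=\sum_{l}t_{l}^{n}\in H^{2n}(X)$, so that $s_{n}(M)=\la s_{n}(TM),[M]\ra$, I would next use that $s_{n}$ is additive on $K$-theory and that $\deg g=1$. Degree one yields $\la g^{*}s_{n}(TH_{i,j}),[BR_{i,j}]\ra=\la s_{n}(TH_{i,j}),g_{*}[BR_{i,j}]\ra=s_{n}(H_{i,j})$, so it suffices to show that the remaining summand $s_{n}\bigl(TBF_{i}-f_{i}^{*}T\C P^{i}\bigr)$ contributes nothing. This is the heart of the matter, and the step I expect to be the main (indeed essentially the only substantive) obstacle: the class lies in $H^{2n}(BF_{i})=H^{2(i+j-1)}(BF_{i})$, and since $\dim_{\C}BF_{i}=i$, this group vanishes as soon as $i+j-1>i$, that is, $j\ge 2$. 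In the range of the theorem one has $2n>2$, hence $i+j\ge 3$, which together with $i\le j$ forces $j\ge 2$; thus the correction vanishes identically and $s_{n}(BR_{i,j})=s_{n}(H_{i,j})$. This dimension count is exactly what renders the blow-ups concealed in $f_{i}$ invisible to $s_{n}$, and it is precisely where the hypothesis that the degree exceeds $2$ is used. (Equivalently, one may invoke the standard formula $s_{n}(D)=\la(s_{n}(TX)-c_{1}(\xi)^{n})\,c_{1}(\xi),[X]\ra$ for the dualization $D$ of $\xi\to X$, and observe that the base contributions $s_{n}(TBF_{i})$ and $(i+1)c_{1}(\ol{\eta_{i}})^{n}$ both vanish by the same dimension reason, after which $\deg f_{i}=1$ transfers the pairing from $BF_{i}\times\C P^{j}$ down to $\C P^{i}\times\C P^{j}$.)

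Finally I would deduce that the $BR_{i,j}$ with $i+j=n+1$ are multiplicative generators in degree $2n$. By Proposition \ref{pr:qgmi} the hypersurfaces $H_{i,j}$ with $i+j=n+1$ form such a family, which by the characterisation of generators through the Milnor number (\cite[Theorem 9.1.8]{bu-pa-15}) amounts to the greatest common divisor of the integers $s_{n}(H_{i,j})$ being $p$ when $n+1=p^{s}$ is a prime power and $1$ otherwise. Since the previous step gives $s_{n}(BR_{i,j})=s_{n}(H_{i,j})$ for every admissible pair $(i,j)$, the family $\{BR_{i,j}\}$ satisfies the same divisibility condition and therefore also consists of multiplicative generators of $\Omega^{*}_{U}$ in degree $2n$.
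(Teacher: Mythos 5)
Your proposal is correct and follows essentially the route the paper itself indicates (its proof body is left empty): the degree-one birational morphism $(f_{i}\times Id_{j})|_{BR_{i,j}}\colon BR_{i,j}\to H_{i,j}$, the behaviour of the Milnor number $s_{n}$ under such a map, and the reduction of the generation claim to Proposition \ref{pr:qgmi} via the $s_{n}$-criterion of \cite[Theorem 9.1.8]{bu-pa-15}; your $K$-theoretic comparison of stable tangent bundles and the dimension-vanishing of $s_{n}\bigl(TBF_{i}-f_{i}^{*}T\C P^{i}\bigr)$ supply exactly the details the paper omits. The only cases your dimension count excludes are $j\leq 1$, i.e. $(i,j)\in\{(0,0),(0,1),(1,1)\}$, where the equality is trivial since $f_{0}$ and $f_{1}$ are isomorphisms and hence $BR_{i,j}=H_{i,j}$.
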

\begin{Proof}
\end{Proof}

The varieties $BR_{i,j}$ as well as $H_{i,j}$ are spaces of projective fiber bundles. Let's specify this structure.

\begin{lm}\label{lm:pb2}
For any $k=1,\dots,i$ one has an identity over $BF_{i}$:
\[
f_{i}^{*}\eta_{k}^{*}\simeq\bigoplus_{q=1}^{k}\beta_{q}^{*}.
\]
\end{lm}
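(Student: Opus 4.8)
The plan is to prove the identity over $BF_k$ and then pull it back, since both $f_i^{*}\eta_k^{*}$ and each $\beta_q^{*}$ with $q\leq k$ are pulled back to $BF_i$ along the natural projection $BF_i\to BF_k$: the map $BF_i\to\C P^k$ sending a flag $(l_0,\dots,l_i)$ to $l_k$ factors as $BF_i\to BF_k\xrightarrow{f_k}\C P^k$. Hence it suffices to treat the case $i=k$.

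The first step is to recognise $f_k^{*}\eta_k^{*}$ as a quotient bundle. Over $\C P^k$ one has $\eta_k\oplus\eta_k^{*}=\underline{\C}^{k+1}$, so $\eta_k^{*}$ is isomorphic to the quotient $\underline{\C}^{k+1}/\eta_k$; pulling back along $f_k$ and using \eqref{eq:2.4} gives
\[
f_k^{*}\eta_k^{*}\cong\underline{\C}^{k+1}/\beta_k,
\]
where $\beta_k\hookrightarrow\underline{\C}^{k+1}$ is the tautological sub-line-bundle with fibre $l_k\subset\C^{k+1}$.

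Then I would induct on $k$. The base case $k=1$ is immediate, since $BF_1=\C P^1$, the map $f_1$ is an isomorphism, and $\beta_1=\eta_1$, $\beta_1^{*}=\eta_1^{*}$. For the inductive step I exploit the flag structure: writing $\underline{\C}^{k+1}=\underline{\C}^{k}\oplus\underline{\C_k}$ with $\C_k=\C\langle e_k\rangle$, the bounded flag condition $l_k\subset l_{k-1}\oplus\C_k$ gives an inclusion of subbundles $\beta_k\subset\beta_{k-1}\oplus\underline{\C_k}$. By \eqref{eq:1.5} the orthocomplement realises $(\beta_{k-1}\oplus\underline{\C_k})/\beta_k\cong\beta_k^{*}$, while dividing out the trivial summand yields $\underline{\C}^{k+1}/(\beta_{k-1}\oplus\underline{\C_k})\cong\underline{\C}^{k}/\beta_{k-1}\cong f_{k-1}^{*}\eta_{k-1}^{*}$. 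Combining these identifications with the previous step produces a short exact sequence of complex vector bundles over $BF_k$,
\[
0\to\beta_k^{*}\to f_k^{*}\eta_k^{*}\to\bigoplus_{q=1}^{k-1}\beta_q^{*}\to 0,
\]
where the quotient has been rewritten using the induction hypothesis.

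It remains to split this sequence. Over the compact manifold $BF_k$ every short exact sequence of complex vector bundles splits (choose a Hermitian metric and take orthogonal complements), so $f_k^{*}\eta_k^{*}\cong\beta_k^{*}\oplus\bigoplus_{q=1}^{k-1}\beta_q^{*}=\bigoplus_{q=1}^{k}\beta_q^{*}$, as required. I expect the main obstacle to be precisely this last point: the stable identities \eqref{eq:2.4} and \eqref{eq:2} only yield $\beta_k\oplus f_k^{*}\eta_k^{*}\cong\beta_k\oplus\bigoplus_{q=1}^{k}\beta_q^{*}$, and one cannot cancel the non-trivial summand $\beta_k$ directly. It is the genuine (non-stable) filtration of $\underline{\C}^{k+1}/\beta_k$ coming from the flag, together with the splitting of short exact sequences over a compact base, that upgrades this to an honest isomorphism of vector bundles.
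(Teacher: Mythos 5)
Your proof is correct, but it takes a genuinely different route from the paper's. The paper's argument is a three-line formal manipulation: pulling back $\eta_{k}\oplus\eta_{k}^{*}=\underline{\C}^{k+1}$ along $f_{i}$ and using \eqref{eq:2.4} gives $\beta_{k}\oplus f_{i}^{*}\eta_{k}^{*}\simeq\underline{\C}^{k+1}$, and then it invokes \eqref{eq:2}, i.e. $\beta_{k}\oplus\bigoplus_{q=1}^{k}\beta_{q}^{*}\simeq\underline{\C}^{k+1}$, concluding in effect by cancelling $\beta_{k}$ from both sides. As you correctly observe, naive cancellation of a non-trivial line summand is not valid for vector bundles in general; the paper's step is legitimate only because both \eqref{eq:2.4} and \eqref{eq:2} arise from one and the same tautological embedding $\beta_{k}\subset\underline{\C}^{k+1}$, so that $f_{i}^{*}\eta_{k}^{*}$ and $\bigoplus_{q=1}^{k}\beta_{q}^{*}$ are complements of the same subbundle and hence both isomorphic to the quotient $\underline{\C}^{k+1}/\beta_{k}$ (alternatively, after your reduction to the base $BF_{k}$, tensoring with $\overline{\beta_{k}}$ puts the two rank-$k$ bundles in the stable range over a $2k$-dimensional complex, where cancellation of a trivial summand does hold). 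Your proof makes precisely this geometric content explicit instead of leaving it implicit: the reduction to $i=k$, the quotient description $f_{k}^{*}\eta_{k}^{*}\cong\underline{\C}^{k+1}/\beta_{k}$, and the induction along the flag filtration $\beta_{k}\subset\beta_{k-1}\oplus\underline{\C}\subset\underline{\C}^{k+1}$, with the resulting short exact sequence split by a Hermitian metric; in effect you re-derive \eqref{eq:2} from \eqref{eq:1.5} rather than citing it. What each approach buys: the paper's proof is shorter given the identities already established, but its final step needs the common-complement (or stable-range) justification to be airtight; yours is longer but self-contained and supplies exactly the justification for the step you identified as the main obstacle.
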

\begin{Proof}
By the identity \eqref{eq:2.4} one has $f_{i}^{*}\eta_{k}\simeq\beta_{k}$. Next, $f_{i}^{*}(\underline{\C}^{k+1})=\underline{\C}^{k+1}$. Consequently, $f_{i}^{*}\eta_{k}^{*}\oplus\beta_{k}\simeq\underline{\C}^{k+1}$. It remains to use formula \eqref{eq:2}.
\end{Proof}

\begin{pr}\label{pr:br}
\[
BR_{i,j}=\P(\bigoplus_{k=1}^{i}\overline{\beta_{k}^{*}}\oplus\underline{\C}^{j-i})\to BF_{i}.
\]
\end{pr}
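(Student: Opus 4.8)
The plan is to combine the two results established just before the statement, namely Proposition~\ref{pr:mh} describing the fibre bundle structure of the Milnor hypersurface and Definition~\ref{def:brij} realising $BR_{i,j}$ as a pullback, together with the new Lemma~\ref{lm:pb2}. First I would recall from Definition~\ref{def:brij} that
\[
BR_{i,j}=\P(f_{i}^{*}\overline{\eta_{i}^{*}}\oplus\underline{\C}^{j-i})\to BF_{i},
\]
so the entire computation reduces to identifying the pulled-back bundle $f_{i}^{*}\overline{\eta_{i}^{*}}$ over $BF_{i}$. Since projectivisation is insensitive to the precise model of a bundle up to isomorphism, it suffices to produce an isomorphism of complex vector bundles $f_{i}^{*}\overline{\eta_{i}^{*}}\simeq\bigoplus_{k=1}^{i}\overline{\beta_{k}^{*}}$ over $BF_{i}$.

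Next I would invoke Lemma~\ref{lm:pb2} in the special case $k=i$, which gives $f_{i}^{*}\eta_{i}^{*}\simeq\bigoplus_{q=1}^{i}\beta_{q}^{*}$. Applying complex conjugation (which commutes with pullback and with Whitney sums) yields
\[
f_{i}^{*}\overline{\eta_{i}^{*}}\simeq\overline{f_{i}^{*}\eta_{i}^{*}}\simeq\overline{\bigoplus_{q=1}^{i}\beta_{q}^{*}}\simeq\bigoplus_{q=1}^{i}\overline{\beta_{q}^{*}}.
\]
Substituting this isomorphism into the projectivised bundle from Definition~\ref{def:brij} immediately gives
\[
BR_{i,j}=\P\Bigl(\bigoplus_{k=1}^{i}\overline{\beta_{k}^{*}}\oplus\underline{\C}^{j-i}\Bigr)\to BF_{i},
\]
which is exactly the claimed identity.

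The proof is therefore essentially a one-line bookkeeping exercise once Lemma~\ref{lm:pb2} is in hand, and the genuinely substantive content lives in that lemma rather than in this proposition. The only point demanding the slightest care is the compatibility of conjugation with the pullback and the direct sum, but these are standard functorial properties of the bar operation and present no real obstacle. I should also confirm that in Definition~\ref{def:brij} the trivial summand $\underline{\C}^{j-i}$ and the bundle $f_{i}^{*}\overline{\eta_{i}^{*}}$ are already the conjugated objects, so that no stray conjugation is introduced or lost; given the conventions fixed in Proposition~\ref{pr:mh}, everything matches, and the edge cases $i=0$ (where the direct sum is empty and $BR_{0,j}=\C P^{j-1}$) and $i=j$ (where the trivial summand disappears) are consistent with the formula.
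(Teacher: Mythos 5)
Your proposal is correct and follows exactly the paper's route: the paper's own proof is a one-line citation of Lemma~\ref{lm:pb2}, Proposition~\ref{pr:mh} and Definition~\ref{def:brij}, and your argument simply spells out that combination (applying Lemma~\ref{lm:pb2} at $k=i$, conjugating, and substituting into the projectivisation from Definition~\ref{def:brij}). No gaps; your version is just a more explicit write-up of the same argument.
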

\begin{Proof}
Follows from Lemma \ref{lm:pb2}, Proposition \ref{pr:mh} and Definition \ref{def:brij}.
\end{Proof}

Unlike Milnor hypersurfaces, the Buchstaber-Ray varieties are toric.

\begin{cor}[See \cite{bu-ra-98}]\label{cor:toric}
$BR_{i,j}$ is a non-singular projective toric variety.
\end{cor}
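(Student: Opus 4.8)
The plan is to deduce toricity directly from the projective-bundle description in Proposition \ref{pr:br}. By that Proposition, $BR_{i,j}=\P(\bigoplus_{k=1}^{i}\overline{\beta_{k}^{*}}\oplus\underline{\C}^{j-i})\to BF_{i}$ is the projectivisation of a Whitney sum of complex line bundles over $BF_{i}$. The base $BF_{i}=BF(\underline{\C},\dots,\underline{\C})$ is, by Definition \ref{def:bott_glob} with $X=pt$, an iterated $\C P^{1}$-bundle each of whose stages is the projectivisation $\P(\zeta_{k}\oplus\underline{\C})$ of a split rank-$2$ bundle; hence $BF_{i}$ is a Bott tower, which is a well-known nonsingular projective toric variety. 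Thus $BR_{i,j}$ is obtained from the toric variety $BF_{i}$ by one further projectivisation of a sum of line bundles, i.e. it is a generalised Bott manifold.

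First I would invoke the general principle that the projectivisation $\P(L_{1}\oplus\dots\oplus L_{r})\to Y$ of a Whitney sum of complex line bundles over a nonsingular projective toric variety $Y$ is again a nonsingular projective toric variety: each line bundle over a toric variety admits a torus linearisation, the fibrewise torus $(\C^{\times})^{r-1}$ of $\C P^{r-1}$ together with the torus of $Y$ acts with a dense orbit of the right dimension $\dim Y+(r-1)$, and the fan of the total space is assembled from the fan of $Y$ and that of $\C P^{r-1}$ in the standard way for toric projective bundles. Applying this with $Y=BF_{i}$ and the line bundles $\overline{\beta_{1}^{*}},\dots,\overline{\beta_{i}^{*}},\underline{\C},\dots,\underline{\C}$ would yield the claim, provided these summands can be taken algebraic and torus-equivariant.

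The main obstacle is the complex conjugation in the summands $\overline{\beta_{k}^{*}}$: conjugation is not an algebraic operation, so a priori $\overline{\beta_{k}^{*}}$ is only a smooth complex line bundle. I would circumvent this by the observation that a complex line bundle is determined up to smooth isomorphism by its first Chern class, and $c_{1}(\overline{\beta_{k}^{*}})=-c_{1}(\beta_{k}^{*})=c_{1}((\beta_{k}^{*})^{-1})$; hence $\overline{\beta_{k}^{*}}$ is smoothly isomorphic to the dual $(\beta_{k}^{*})^{-1}$ of the algebraic line bundle $\beta_{k}^{*}=\zeta_{k+1}^{*}$ over $BF_{i}$. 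Consequently
\[
\P\Bigl(\bigoplus_{k=1}^{i}\overline{\beta_{k}^{*}}\oplus\underline{\C}^{j-i}\Bigr)\cong\P\Bigl(\bigoplus_{k=1}^{i}(\beta_{k}^{*})^{-1}\oplus\underline{\C}^{j-i}\Bigr)
\]
as smooth projective bundles over $BF_{i}$, and the right-hand side is the projectivisation of a Whitney sum of algebraic line bundles over the toric variety $BF_{i}$, to which the principle of the previous paragraph applies, realising $BR_{i,j}$ as a nonsingular projective toric variety. Finally I would check that this toric structure agrees with the algebraic structure of $BR_{i,j}$ coming from the defining equation \eqref{eq:br}: both present $BR_{i,j}$ as the pullback of the $H_{i,j}$-bundle along $f_{i}$, and $f_{i}$ is a toric morphism by its description after \eqref{eq:fi} as a composition of blow-ups at torus-invariant projective subspaces.
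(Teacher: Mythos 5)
Your proposal is correct and follows essentially the same route as the paper: the corollary is deduced from Proposition \ref{pr:br}, which exhibits $BR_{i,j}$ as the projectivisation of a Whitney sum of line bundles over the toric base $BF_{i}$, combined with the standard fact that such projectivisations (generalised Bott towers) are nonsingular projective toric varieties --- the paper leaves this implicit, citing \cite{bu-ra-98}, and treats the resulting torus action and its compatibility with the embedding $BR_{i,j}\subset BF_{i}\times\C P^{j}$ in Remark \ref{rem:breq}. Your additional care in replacing the conjugate bundles $\overline{\beta_{k}^{*}}$ by smoothly isomorphic algebraic (dual) line bundles is precisely the point the paper glosses over, not a departure from its method.
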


\begin{rem}\label{rem:breq}
Let $0\leq i\leq j$. The equation \eqref{eq:br} is not invariant under the usual torus action on $BF_{i}\times\C P^{j}$, unless $i=0$. Hence, for $1\leq i\leq j$, $BR_{i,j}$ is not an invariant divisor of $BF_{i}\times \C P^{j}$. However, due to formula \eqref{eq:2} one can identify the trivial $\C P^{j}$-bundle over $BF_{i}$ with the projectivisation:
\[
\P(\overline{\beta_{i}}\oplus\bigoplus_{k=1}^{i}\overline{\beta_{k}^{*}}\oplus\underline{\C}^{j-i})\to BF_{i}.
\]
Now endow $BF_{i}\times\C P^{j}$ with the $(\C^{\times})^{n}$-action coming from the above fiber bundle structure. (It is equivariantly isomorphic to the previous one. This follows, for example, from the main result of \cite{flo-03}.) The embedding
\[
BR_{i,j}=\P(\bigoplus_{k=1}^{i}\overline{\beta_{k}^{*}}\oplus\underline{\C}^{j-i})\subset
 \P(\overline{\beta_{i}}\oplus\bigoplus_{k=1}^{i}\overline{\beta_{k}^{*}}\oplus\underline{\C}^{j-i})=BF_{i}\times\C P^{j}
\]
is equivariant.
\end{rem}

It is important that $BR_{i,j}$ are not totally normally split, in general. To prove this fact, an auxiliary statement is necessary.

\begin{lm}\normalfont{(\cite[Theorem 1.5]{mit-06}).}\label{lm:cpsp}
Let $n>1$. Then $\C P^{n}$ is not totally normally split.
\end{lm}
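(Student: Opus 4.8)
The plan is to extract a purely cohomological obstruction from the total Chern class of the stable normal bundle of $\C P^{n}$. Recall that the Euler sequence gives $T\C P^{n}\oplus\ul\C\simeq\overline{\eta_{n}}^{\oplus(n+1)}$, so that, writing $x:=c_{1}(\overline{\eta_{n}})\in H^{2}(\C P^{n};\Z)$ for the hyperplane generator, one has $c(T\C P^{n})=(1+x)^{n+1}$. Consequently the stable normal bundle $\nu$ of $\C P^{n}$ has total Chern class $c(\nu)=c(T\C P^{n})^{-1}=(1+x)^{-(n+1)}$. I would then argue by contradiction, assuming $\C P^{n}$ to be totally normally split.

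Suppose $\nu\oplus\ul\C^{N}\simeq\bigoplus_{i=1}^{r}L_{i}$ with each $L_{i}$ a complex line bundle. Since $H^{2}(\C P^{n};\Z)=\Z\la x\ra$, every first Chern class is $c_{1}(L_{i})=a_{i}x$ for a unique integer $a_{i}$, and adjoining a trivial summand does not change Chern classes, so $c(\nu)=\prod_{i=1}^{r}(1+a_{i}x)$. The elements $a_{i}x$ are thus the Chern roots of $\nu$, and the key observation is that the second Newton power sum of these roots, $p_{2}=\sum_{i=1}^{r}(a_{i}x)^{2}=\bigl(\sum_{i=1}^{r}a_{i}^{2}\bigr)x^{2}$, must have a \emph{non-negative} integer coefficient, being a sum of squares of integers.

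It remains to compute $p_{2}$ directly from $c(\nu)$ and to reach a contradiction. Using $p_{2}=c_{1}(\nu)^{2}-2c_{2}(\nu)$ together with $c_{1}(\nu)=-(n+1)x$ and $c_{2}(\nu)=\binom{n+2}{2}x^{2}$ (the degree one and degree two parts of $(1+x)^{-(n+1)}$), one finds $p_{2}=\bigl((n+1)^{2}-(n+1)(n+2)\bigr)x^{2}=-(n+1)x^{2}$. For $n>1$ the class $x^{2}$ generates $H^{4}(\C P^{n};\Z)\cong\Z$, so the coefficient $\sum_{i}a_{i}^{2}=-(n+1)$ would be a negative integer, which is impossible. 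This is the sought contradiction. I expect no serious obstacle here: the only points requiring care are the bookkeeping that every line-bundle Chern root is an integer multiple of $x$ (so that the positivity of $\sum_{i}a_{i}^{2}$ may be invoked) and the consistency check that the argument must fail for $n=1$ — which it does, since there $x^{2}=0$ and $\C P^{1}\cong S^{2}$ is indeed totally normally split.
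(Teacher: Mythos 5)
Your proof is correct, and it takes a genuinely different route from the paper: the paper does not prove this lemma at all, but imports it as an external result (\cite[Theorem 1.5]{mit-06}), whereas you give a short, self-contained Chern-class argument. Your steps check out: from $T\C P^{n}\oplus\ul\C\simeq\overline{\eta_{n}}^{\oplus(n+1)}$ one gets $c(\nu)=(1+x)^{-(n+1)}$, hence $c_{1}(\nu)=-(n+1)x$ and $c_{2}(\nu)=\binom{n+2}{2}x^{2}$; if $\nu$ were stably isomorphic to $\bigoplus_{i=1}^{r}L_{i}$, then since $H^{2}(\C P^{n};\Z)=\Z\la x\ra$ the Chern roots are $a_{i}x$ with $a_{i}\in\Z$, and Newton's identity $p_{2}=c_{1}^{2}-2c_{2}$ forces $\sum_{i}a_{i}^{2}=(n+1)^{2}-(n+2)(n+1)=-(n+1)$, which is impossible as soon as $x^{2}\neq 0$, i.e. for $n>1$. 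The bookkeeping is handled correctly: adding $\ul{\C}^{N}$ changes no Chern classes, so the obstruction applies to any representative of the stable normal class, and your $n=1$ sanity check agrees with the paper's own remark that $\C P^{1}$ (like every Riemann surface) is totally normally split. An equivalent, slightly slicker phrasing of the same idea: the power sum $p_{2}$ is additive and vanishes on trivial bundles, so $p_{2}(\nu)=-p_{2}(T\C P^{n})=-(n+1)x^{2}$, while $p_{2}$ of a sum of line bundles is a sum of squares of integral degree-$2$ classes. As for what each approach buys: your argument makes the lemma elementary and self-contained (ordinary cohomology and one symmetric-function identity), which is all this paper actually needs; the paper's citation outsources the proof to the literature, where the quoted theorem sits in a broader study of splitting of bundles over projective spaces, none of which is otherwise used here.
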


Notice that $\C P^{1}$, as well as any Riemannian surface $\Sigma_{g}$ of arbitrary genus $g$, is totally normally split. This follows from the fact that any complex vector bundle over $\Sigma_{g}$ is topologically isomorphic to the Whitney sum of a complex linear vector bundle and a trivial one.

\begin{pr}\label{pr:nsp}
Let $\xi\to X$ be a complex vector bundle of complex rank $k>2$ over a smooth compact stably complex manifold $X$. Then the fiberwise projectivisation $\P(\xi)\to X$ is not totally normally split.
\end{pr}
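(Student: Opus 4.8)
The plan is to reduce to the already-established Lemma \ref{lm:cpsp} by restricting the projectivisation to a single fibre. Since $k>2$, each fibre of $\pi\colon\P(\xi)\to X$ is a copy of $\C P^{k-1}$ with $k-1\geq 2$, which is \emph{not} totally normally split, and I would transport this failure to $\P(\xi)$ itself.

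First, I would fix a point $x\in X$ and consider the fibre $F:=\pi^{-1}(x)\cong\C P^{k-1}$ together with its inclusion $\iota\colon F\hookrightarrow\P(\xi)$. The crucial geometric observation is that the normal bundle of $F$ inside $\P(\xi)$ is trivial: being the fibre over a point, its normal bundle is the pull-back $\pi^{*}(T_{x}X)$ of the (constant) tangent space at $x$, which is a trivial bundle over $F$. Combining this with the standard splitting $T\P(\xi)|_{F}\simeq TF\oplus\pi^{*}(T_{x}X)$ of the tangent bundle of a fibre bundle along a fibre, and using the compatible stably complex structures, I obtain the identity $[T\P(\xi)|_{F}]=[TF]$ in reduced complex $K$-theory $\widetilde{K}^{0}(F)$. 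Passing to stable normal bundles (negation in $K$-theory) yields that $\nu_{\P(\xi)}|_{F}$ and $\nu_{F}$ agree stably.

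Second, I would derive the contradiction. Assume $\P(\xi)$ is totally normally split, so that its stable normal bundle is stably isomorphic to a Whitney sum $\bigoplus_{i}L_{i}$ of complex line bundles. Restricting along $\iota$, each $L_{i}|_{F}$ is again a complex line bundle, so $\nu_{\P(\xi)}|_{F}$ is a Whitney sum of complex line bundles; by the previous step so is $\nu_{F}$ (stably), i.e. $F\cong\C P^{k-1}$ is totally normally split. Since $k>2$ forces $k-1\geq 2>1$, this contradicts Lemma \ref{lm:cpsp}, which proves the Proposition.

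The main point requiring care --- and the only genuine obstacle --- is the second sentence of the second paragraph: one must verify both that the normal bundle of a fibre is trivial and that the resulting identification of stable normal bundles is compatible with the chosen stably complex structures, so that the abstract $\C P^{k-1}$ and the fibre $F\subset\P(\xi)$ really carry the \emph{same} stable normal bundle (one may double-check this against the Euler-sequence formula $T^{\mathrm{vert}}\oplus\underline{\C}\simeq\overline{\gamma}\otimes\pi^{*}\xi$ for the relative tangent bundle, which on $F$ restricts to the Euler sequence of $\C P^{k-1}$). Once this triviality of the fibrewise normal directions is in place, the implication ``totally normally split restricts to totally normally split'' is purely formal, and the whole argument collapses onto Lemma \ref{lm:cpsp}.
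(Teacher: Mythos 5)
Your proof is correct and follows essentially the same route as the paper's: restrict the (assumed split) stable normal bundle of $\P(\xi)$ to a fibre $\C P^{k-1}_{x}$, identify this restriction with the stable normal bundle of $\C P^{k-1}$ (which the paper asserts in one line and you justify via the triviality of the fibre's normal directions), and contradict Lemma \ref{lm:cpsp}. The only difference is that you make explicit the verification the paper leaves implicit, which is a sound elaboration rather than a new argument.
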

\begin{Proof}
Assume the contrary, i.e. the sum $N\P(\xi)\oplus\underline{\C}^{q}$ is isomorphic to a sum of complex linear vector bundles for some $q\geq 0$. Let $x\in X$. Denote the corresponding fiber inclusion map by $\iota:\C P^{k-1}_{x}\to \P (\xi)$. Then the pull-back $\iota^{*}(N\P(\xi)\oplus\underline{\C}^{q})$ splits into a sum of line bundles. By definition, $\iota^{*}N\P(\xi)=N\C P^{k-1}_{x}$. Hence, we obtain a contradiction to Lemma \ref{lm:cpsp}. Q.E.D. 
\end{Proof}

In \cite{lu-pa-14}, Z.~L\"{u} and T.~Panov introduced another family of multiplicative generators of $\Omega^{*}_{U}$. Namely, these are projective toric varieties $L(i,j):=\P(\eta_{i}\oplus\underline{\C}^{j})\to\C P^{i}$ of complex dimension $i+j$.

\begin{cor}\label{cor:nspl}
$BR_{i,j}$ is not totally normally split for $j>2$. $L(i,j)$ is not totally normally split for $j>1$.
\end{cor}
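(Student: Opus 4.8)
The plan is to deduce both statements directly from Proposition \ref{pr:nsp}, which says that a fiberwise projectivisation $\P(\xi)\to X$ of a complex vector bundle of rank $k>2$ is never totally normally split. The key is simply to identify each of $BR_{i,j}$ and $L(i,j)$ as such a projectivisation and to check the rank condition $k>2$ in the stated range of indices.

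First I would treat $BR_{i,j}$. By Proposition \ref{pr:br} we have the fibre bundle description
\[
BR_{i,j}=\P\Bigl(\bigoplus_{k=1}^{i}\overline{\beta_{k}^{*}}\oplus\underline{\C}^{j-i}\Bigr)\to BF_{i},
\]
so $BR_{i,j}$ is the projectivisation of a complex vector bundle $\xi\to BF_{i}$. I need the rank of $\xi$. Since $\rk\beta_{k}^{*}=1$ for each $k=1,\dots,i$, the rank is $i+(j-i)=j$. Hence $\xi$ has complex rank $j$, and when $j>2$ Proposition \ref{pr:nsp} applies immediately and gives that $BR_{i,j}$ is not totally normally split. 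For $L(i,j)=\P(\eta_{i}\oplus\underline{\C}^{j})\to\C P^{i}$ the bundle being projectivised has complex rank $1+j$, so the hypothesis $k>2$ of Proposition \ref{pr:nsp} reads $1+j>2$, i.e. $j>1$; in that range the proposition again forbids total normal splitting. This matches the two stated ranges exactly.

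The only genuine point to verify is the rank bookkeeping, since the indices have a constraint $0\le i\le j$ that I should not lose track of. For $BR_{i,j}$ the relevant rank is $j$, independent of $i$, and the given condition $j>2$ is precisely what makes $j\ge 3>2$, so the proposition fires regardless of $i$. (When $i=0$ one has $BR_{0,j}=\C P^{j-1}$ by Definition \ref{def:brij}, and $\C P^{j-1}$ is not totally normally split for $j-1>1$ by Lemma \ref{lm:cpsp}, consistent with $j>2$; so no exceptional case arises.) Similarly for $L(i,j)$ the rank is $j+1$ and $j>1$ is equivalent to $j+1\ge 3$.

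I expect no real obstacle here: this corollary is an immediate specialisation of Proposition \ref{pr:nsp}, and the whole content of the proof is recognising the correct fibre-bundle presentations and reading off the ranks. The subtlest step, if any, is making sure the rank of the projectivised bundle — rather than the dimension of the total space — is what enters the hypothesis $k>2$, and confirming that the stated inequalities $j>2$ and $j>1$ are exactly the translations of $k>2$ in the two cases.
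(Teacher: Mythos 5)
Your proof is correct and takes exactly the route the paper intends: the paper leaves the proof of this corollary empty precisely because it is an immediate consequence of the projective-bundle description of $BR_{i,j}$ (Proposition \ref{pr:br}), the definition of $L(i,j)$, and Proposition \ref{pr:nsp}, with the rank bookkeeping ($\rk = j$ for $BR_{i,j}$ since each $\overline{\beta_k^*}$ is a line bundle, and $\rk = j+1$ for $L(i,j)$) translating $k>2$ into $j>2$ and $j>1$ respectively. Your check of the edge case $i=0$ via Lemma \ref{lm:cpsp} is a sensible extra verification, consistent with the general argument.
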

\begin{Proof}
\end{Proof}
\subsection{Construction of representatives and necessary manifolds}\label{ss:const}

Remind that the blow-up of a non-singular projective toric variety $X$ at an invariant toric subvariety is again a projective toric variety. I present a particular case of equivariant blow-up which respects the property of being totally normally split. I use equivariant blow-ups of codimension $2$ subvarieties of totally normally split toric varieties to find quasitoric totally normally split manifolds with ``better'' Milnor numbers. 
\begin{pr}\label{pr:constr}
Let $X$ be a nonsingular projective complex variety of complex dimension $n$. Let $Z\subset X$ be a closed smooth subvariety in $X$ of (complex) codimension $2$. Suppose that $X$ is stably normally split. Then $Bl_{Z} X$ is also stably normally split.
\end{pr}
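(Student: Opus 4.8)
The plan is to understand the blow-up $Bl_Z X$ geometrically and then compute its normal bundle in terms of data that are already known to be totally split. Since $Z\subset X$ has complex codimension $2$, the exceptional divisor $E$ of the blow-up $\pi:Bl_Z X\to X$ is the projectivisation $\P(N_{Z/X})$ of the (rank $2$) normal bundle $N_{Z/X}$ of $Z$ in $X$; thus $E\to Z$ is a $\C P^1$-bundle. The key point to exploit is that $Bl_Z X$ is covered, away from $E$, by a copy of $X\setminus Z$, on which $\pi$ is an isomorphism, while a neighbourhood of $E$ carries the tautological bundle of the projectivisation. I would first record the standard formula for the stable tangent (equivalently, the stable normal) bundle of a blow-up: over $Bl_Z X$ one has a stable relation expressing $T(Bl_Z X)$ in terms of $\pi^* TX$ and a correction supported on the exceptional divisor, where the correction is built from the tautological line bundle of $E=\P(N_{Z/X})$ and the pulled-back normal data of $Z$.

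First I would fix notation for the normal bundle $NX$ of the stably complex manifold $X$ and, assuming $X$ is stably normally split, write $NX\oplus\underline{\C}^q\simeq \bigoplus_{s} \lambda_s$ with each $\lambda_s$ complex linear. Pulling back along $\pi$, the bundle $\pi^*(NX)$ is then a Whitney sum of complex linear bundles, so the ``bulk'' contribution to $N(Bl_Z X)$ coming from $\pi^* NX$ is automatically totally split. The remaining work is the exceptional correction term. Writing $\mathcal{O}(E)$ for the line bundle associated to the exceptional divisor and $\gamma$ for the tautological line bundle of the $\C P^1$-bundle $E\to Z$, I would express the difference $N(Bl_Z X)-\pi^*(NX)$ (in reduced $K$-theory, or as an honest stable isomorphism after adding trivial summands) as a combination of $\mathcal{O}(E)$, $\gamma$, and restrictions of bundles on $Z$. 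The crucial structural fact is that each of these pieces is either a complex line bundle outright or the pullback of a bundle on $Z$ whose stable inverse is totally split; for the latter I invoke Lemma \ref{lm:66}, which guarantees that conjugates, pullbacks, Whitney sums, and tensor products of bundles with totally split stable inverses again have totally split stable inverses.

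The main obstacle, and the step I expect to require the most care, is the explicit identification of the exceptional correction term as a sum whose stable inverse is totally split. Concretely I must verify two things: that the normal bundle $N_{Z/X}$ of $Z$ in $X$, together with $TZ$, inherits splitting data from the hypothesis on $X$ (so that the $\C P^1$-bundle $E\to Z$ is itself totally normally split, via Proposition \ref{pr:split} or a direct argument using that rank-$2$ projectivisations are well-behaved), and that the line bundle $\mathcal{O}(E)$ and the tautological $\gamma$ combine with $\pi^* NX$ so that the total $N(Bl_Z X)$ becomes a Whitney sum of complex linear bundles after stabilisation. Because the codimension is exactly $2$, the projectivised normal bundle has one-dimensional fibres and the analysis of $E$ stays within the totally-split regime, in contrast to the higher-codimension case where Proposition \ref{pr:nsp} would obstruct splitting; this is precisely why the hypothesis $\codim_{\C} Z=2$ is essential. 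Once the correction term is pinned down, the conclusion follows by assembling the split pieces and one final application of Lemma \ref{lm:66}.
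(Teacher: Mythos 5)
Your plan defers exactly the step that constitutes the entire content of Proposition \ref{pr:constr}, and the intermediate claims you propose to verify along the way are not available. The correction term in the blow-up relation is not a vector bundle on $Bl_{Z}X$: it is the pushforward $j_{*}Q$ of a line bundle $Q\simeq g^{*}\det N_{Z/X}\otimes\gamma^{*}$ supported on the exceptional divisor $E=\P(N_{Z/X})$, where $j:E\hookrightarrow Bl_{Z}X$ and $g:E\to Z$. To convert such a class into an honest statement of the form ``stable normal bundle plus trivial is a Whitney sum of line bundles'' you would have to (a) resolve $j_{*}Q$ by line bundles on $Bl_{Z}X$, which already requires $g^{*}\det N_{Z/X}$ to extend to a line bundle on the blow-up, equivalently $\det N_{Z/X}$ to extend from $Z$ to $X$ --- this can fail (already for a twisted cubic $Z\subset\C P^{3}$ no line bundle on $X$ restricts to $\det N_{Z/X}$), and the splitness hypothesis on $X$ gives no control over it; and (b) manage signs: lying in the subgroup of $K$-theory generated by line bundles is strictly weaker than stable splitness, because the stable inverse of a line bundle need not be split (this is why Lemma \ref{lm:66} carries its hypotheses, and is the mechanism behind Lemma \ref{lm:cpsp} and Corollary \ref{cor:nspl}). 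Worse, your first ``verification item'' --- that $TZ$ and $N_{Z/X}$ inherit splitting data from $X$, so that $E\to Z$ is totally normally split via Proposition \ref{pr:split} --- cannot be established: stably one has $NZ\simeq NX|_{Z}\oplus N_{Z/X}$, so the hypothesis splits $NX|_{Z}$ but says nothing about the rank-two bundle $N_{Z/X}$; and Proposition \ref{pr:split} applies to projectivisations of sums of line bundles with split stable inverses, which $N_{Z/X}$ is not given to be. The Remark in the paper immediately after the proposition stresses precisely this point: the statement holds for embeddings $Z\subset X$ with non-split normal bundle, so no correct proof can take such splitting as an input.

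The paper's proof avoids the exceptional divisor altogether, and this is where the codimension-$2$ hypothesis is used constructively rather than as a mere non-obstruction. Since $\codim_{\C}Z=2$, the blow-up $Bl_{Z}X$ sits as a hypersurface $\iota: Bl_{Z}X\hookrightarrow X\times\C P^{1}$ (the paper cites Shafarevich for this). As $X$ is stably normally split and $\C P^{1}$ is, so is $X\times\C P^{1}$: there is a totally split bundle $\xi=\bigoplus_{i=1}^{k}\xi_{i}$ with $T(X\times\C P^{1})\oplus\xi\simeq\underline{\C}^{n+k+1}$. Restricting along $\iota$ gives
\[
T(Bl_{Z}X)\oplus\bigl(\nu\oplus\iota^{*}\xi\bigr)\simeq\underline{\C}^{n+k+1},
\]
where $\nu$, being the normal bundle of a hypersurface, is a line bundle; hence the stable inverse of $T(Bl_{Z}X)$ is totally split and the proof ends. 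Your instinct about why codimension $2$ is essential is sound --- for $\codim_{\C}Z\geq 3$ the exceptional divisor is a $\C P^{\geq 2}$-bundle, hence not totally normally split by Proposition \ref{pr:nsp}, while any divisor in a stably normally split manifold is stably normally split since its normal bundle is a line bundle, so such a blow-up could never be stably normally split --- but in the paper the hypothesis enters positively, through the hypersurface embedding, and no analysis of $E$, of $N_{Z/X}$, or of any exceptional correction term is ever needed.
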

\begin{proof}
By the definition (see \cite[\S 6.2.1]{sh-13}), $Bl_{Z} X$ is a hypersurface in $ X\times\C P^{1}$. Denote the corresponding inclusion morphism by $\iota$. Let $\nu\to Bl_{Z} X$ be the normal (linear) vector bundle of this embedding. $\C P^{1}$ is stably splitting, so is $X\times \C P^{1}$. Hence, there exists a totally split vector bundle $\xi=\bigoplus_{i=1}^{k}\xi_{i}\to X\times \C P^{1}$, $\rk_{\C} \xi_{i}=1$, s.t.
\[
T(X\times \C P^{1})\oplus \xi\simeq \underline{\C}^{n+k+1}.
\]
Restrict this identity to $Bl_{Z} X$:
\[
T (Bl_{Z} X)\oplus \bigl(\nu\oplus\iota^{*}\xi\bigr)\simeq \underline{\C}^{n+k+1}.
\]
Q.E.D.
\end{proof}
\begin{rem}
Notice that the previous Proposition holds for an embedding $Z\subset X$ with non-split normal vector bundle.
\end{rem}
Remind that a smooth manifold $X$ of real dimension $2n$ is called \emph{quasitoric}, if it admits a smooth, locally standard action of $n$-dimensional torus, with orbit space an $n$-dimensional simple convex polytope. A quasitoric manifold is endowed with a natural stably complex structure (see \cite[\S 7.3]{bu-pa-15}). Hence, one can consider the unitary cobordism classes of quasitoric manifolds. We need the following fact about quasitoric manifolds (see \cite{bu-pa-ra-07}, \cite{bu-pa-15}).

\begin{pr}\normalfont{(\cite[\S 9.1]{bu-pa-15},\cite[Lemma 3.5]{ra-86}).}\label{pr:conn}
Let $M_{1},M_{2}$ be quasitoric $2n$-dimensional manifolds. Then there exist quasitoric manifolds $M, M'$ representing the unitary cobordism classes $-[M_{1}],[M_{1}]+[M_{2}]$, resp. Moreover, if $M_{1}$ ($M_{1},M_{2}$, resp.) is totally normally split, then $M$ ($M'$, resp.) is also totally normally split.
\end{pr}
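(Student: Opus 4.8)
The plan is to prove the two assertions separately, taking the \emph{existence} of quasitoric representatives of $-[M_1]$ and of $[M_1]+[M_2]$ as provided by the box sum calculus of \cite{bu-pa-ra-07} (see also \cite[\S 9.1]{bu-pa-15}) and by Ray's connected sum construction \cite[Lemma 3.5]{ra-86}. The substance to be added is the preservation of total normal splitting, and for this the work will be done by Lemma \ref{lm:66} together with the local triviality of complex line bundles over the connecting sphere.

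For the additive inverse I would realise $M$ as the same underlying smooth manifold as $M_1$, equipped with the stably complex structure coming from the reversed (minus) omniorientation; this keeps the torus action and the quotient polytope, so $M$ is again quasitoric. Writing the tangential splitting of $M_1$ as $TM_1\oplus\underline{\C}^{m-n}\simeq\bigoplus_{i=1}^{m}\rho_i=:\alpha$ (with $m$ the number of facets), the reversed omniorientation replaces $\alpha$ by its conjugate $\overline{\alpha}=\bigoplus_i\overline{\rho_i}$, which is the stable tangent bundle of $M$. Since $M_1$ is totally normally split, the stably inverse bundle to $\alpha$ is totally split; by the $\overline{\alpha}$ case of Lemma \ref{lm:66} the stably inverse bundle to $\overline{\alpha}$ is then totally split as well, i.e. $M$ is totally normally split. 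The only delicate point is the sign bookkeeping ensuring $[M]=-[M_1]$; this I would import verbatim from \cite[\S 9.1]{bu-pa-15}, the splitting being insensitive to it.

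For the sum I would take $M'$ to be the equivariant connected (box) sum $M_1\# M_2$, which is quasitoric and represents $[M_1]+[M_2]$ by \cite{bu-pa-ra-07}. As a smooth manifold $M'=(M_1\setminus B_1)\cup_{S^{2n-1}}(M_2\setminus B_2)$, where $B_i\subset M_i$ are open balls, and away from the connecting collar the stable normal bundle of $M'$ agrees with those of $M_1,M_2$. Writing $\nu(M_i)\simeq\bigoplus_{j=1}^{N}\theta_j^{(i)}$ as Whitney sums of line bundles (after stabilising to a common rank $N$), I note that each $\theta_j^{(i)}$ restricts to a trivial bundle on the gluing sphere, since $H^2(S^{2n-1};\Z)=0$. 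Choosing diagonal trivialisations on the two sides, I would glue $\theta_j^{(1)}$ to $\theta_j^{(2)}$ over $S^{2n-1}$ to obtain line bundles $\Theta_j\to M'$; the resulting stably complex structure on $M'$ then has stable normal bundle $\bigoplus_{j=1}^{N}\Theta_j$, so $M'$ is totally normally split.

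The main obstacle is this final gluing step: one must verify that the splittings of $\nu(M_1)$ and $\nu(M_2)$ can be matched summand by summand over the connecting sphere compatibly with the two stably complex structures, so that the orientations dictated by the complex structures glue correctly across the orientation-reversing identification of boundary spheres and the resulting Whitney sum of line bundles is genuinely a stable normal bundle representing $[M_1]+[M_2]$. This is exactly the content of Ray's Lemma \cite[Lemma 3.5]{ra-86}, which I would invoke to conclude; the triviality of the line summands over $S^{2n-1}$ is precisely what permits the matching and is the reason total normal splitting survives the connected sum.
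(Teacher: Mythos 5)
First, a point of reference: the paper contains no proof of this Proposition at all --- it is imported wholesale from \cite[\S 9.1]{bu-pa-15} and \cite[Lemma 3.5]{ra-86} --- so your attempt is a reconstruction of the cited arguments rather than something to be checked against text in the paper. Your treatment of $[M_{1}]+[M_{2}]$ and, in both halves, of the preservation of total normal splitness is the intended mechanism: each facet bundle $\rho_{i}$ has totally split stable inverse (namely $\bigoplus_{j\neq i}\rho_{j}\oplus\nu(M_{1})$), so Lemma \ref{lm:66} applies, and deferring the matching of the two stably complex structures across the connecting sphere to \cite[Lemma 3.5]{ra-86} and the box sum of \cite{bu-pa-ra-07} is exactly what the paper itself does (note only that gluing the line summands by arbitrarily chosen trivialisations over $S^{2n-1}$ need not reproduce the normal bundle of the glued structure --- two bundles agreeing on both halves may differ by a clutching class in $\pi_{2n-1}(U(N))\cong\Z$ --- which is why that step really does have to be quoted rather than waved through).

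The genuine gap is in your construction of $M$ with $[M]=-[M_{1}]$. Reversing the whole omniorientation replaces $\alpha=\bigoplus_{i=1}^{m}\rho_{i}$ by $\overline{\alpha}=\bigoplus_{i=1}^{m}\overline{\rho_{i}}$; this multiplies every top-degree Chern monomial by $(-1)^{n}$, but it \emph{also} reverses the orientation induced by the stably complex structure by $(-1)^{m}$, so the resulting class is $(-1)^{n+m}[M_{1}]$, not $-[M_{1}]$. This fails already for $M_{1}=\C P^{1}\times\C P^{1}$ (where $n=2$, $m=4$): the minus omniorientation is the product of two copies of the structure $\eta\oplus\eta$ on $S^{2}$, hence represents $(-[\C P^{1}])\cdot(-[\C P^{1}])=+[M_{1}]$. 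So the ``sign bookkeeping'' you propose to import verbatim is not a formality: in general the omniorientation change realising $-[M_{1}]$ is \emph{not} wholesale negation (one must negate only part of the characteristic data, or combine negation with an orientation change), and producing it is precisely the content of the cited reference. Fortunately your splitting argument is insensitive to which omniorientation is chosen: any omniorientation change leaves the stable tangent bundle a Whitney sum of bundles each equal to some $\rho_{i}$ or $\overline{\rho_{i}}$, so Lemma \ref{lm:66} still yields total normal splitness. The correct shape of part (a) is therefore: quote from \cite[\S 9.1]{bu-pa-15} that \emph{some} omniorientation on the same quasitoric manifold represents $-[M_{1}]$, and then run your Lemma \ref{lm:66} argument; your specific recipe, as stated, proves the wrong statement.
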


For $1\leq i\leq j$ consider
\[
X_{i,j}=BF(\overline{\beta}_{i},\overline{\beta}^{*}_{i},\dots \overline{\beta}^{*}_{2},\overline{\beta}^{*}_{1},\underbrace{\underline{\C},\dots,\underline{\C}}_{j-i})\to BF_{i}.
\]
Notice that $X_{0,j}=BF_{j}$. For $i>0$ let $Z_{i,j}$ be the codimension $2$ subvariety of $X_{i,j}$ given by the conditions on the tautological line bundles: $\beta_{i}=\beta_{i-1}, \zeta_{j+1}=\zeta_{j}$. Denote $M_{i,j}=Bl_{Zi,j} X_{i,j}$.

\begin{pr}\label{pr:desc}
For $i>0$ one has:
\[
Z_{i,j}=BF(\underline{\C},\overline{\beta}^{*}_{i-1},\dots \overline{\beta}^{*}_{2},\overline{\beta}^{*}_{1},\underbrace{\underline{\C},\dots,\underline{\C}}_{j-i})\to BF_{i-1}.
\]
The normal bundle of the above inclusion $Z_{i,j}\subset X_{i,j}$ is equal to $\overline{\zeta_{j}}\overline{\beta_{i-1}}\oplus \overline{\beta_{i-1}}\to Z_{i,j}$.
\end{pr}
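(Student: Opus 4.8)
The plan is to realise $Z_{i,j}$ as the transverse intersection of two divisors, each a section of one of the $\C P^{1}$-bundles occurring in the tower \eqref{eq:bffbdef} that defines $X_{i,j}$, and then to read off both its bounded flag bundle structure and its normal bundle from this description. First I would identify the two divisors. The top story of the tower is $X_{i,j}=\P(\zeta_{j}\oplus\overline{\beta}_{i})$ over $BF(\overline{\beta}_{i}^{*},\overline{\beta}_{i-1}^{*},\dots,\overline{\beta}_{1}^{*},\underline{\C}^{j-i})\to BF_{i}$; the condition $\zeta_{j+1}=\zeta_{j}$ picks out the section $D_{2}$ on which the tautological line coincides with the first summand $\zeta_{j}$, and its image is canonically isomorphic to that base. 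The condition $\beta_{i}=\beta_{i-1}$ is a condition on $BF_{i}$ alone: writing $BF_{i}=\P(\beta_{i-1}\oplus\underline{\C})\to BF_{i-1}$ (the case $\xi_{i+1}=\underline{\C}$ of Definition \ref{def:bott_glob}), it cuts out the section $S\cong BF_{i-1}$ where the tautological line is the first summand $\beta_{i-1}$, and $D_{1}\subset X_{i,j}$ is the full preimage of $S$.

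Next I would combine the two restrictions to obtain the asserted form of $Z_{i,j}=D_{1}\cap D_{2}$. Restricting $D_{2}\cong BF(\overline{\beta}_{i}^{*},\dots,\overline{\beta}_{1}^{*},\underline{\C}^{j-i})\to BF_{i}$ to $S\cong BF_{i-1}$, every bundle $\overline{\beta}_{k}^{*}$ with $k\le i-1$ and every trivial summand is pulled back unchanged, while $\overline{\beta}_{i}^{*}$ becomes trivial: on $S$ one has $\beta_{i}=\beta_{i-1}$, so \eqref{eq:1.5} gives $\beta_{i-1}\oplus\beta_{i}^{*}\simeq\beta_{i-1}\oplus\underline{\C}$, whence $\beta_{i}^{*}|_{S}\simeq\underline{\C}$ (this is also geometrically clear, as $\beta_{i}^{*}$ is the orthogonal complement line, which over $S$ is exactly the trivial summand $\C_{i}$). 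Substituting, $Z_{i,j}=BF(\underline{\C},\overline{\beta}_{i-1}^{*},\dots,\overline{\beta}_{1}^{*},\underline{\C}^{j-i})\to BF_{i-1}$, as claimed. I expect the bookkeeping here — keeping the tower indices straight and, above all, verifying the collapse $\overline{\beta}_{i}^{*}|_{S}\simeq\underline{\C}$ via \eqref{eq:1.5} — to be the genuinely delicate part of the argument.

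Finally I would compute the normal bundle. Since $D_{1}$ constrains only the base $BF_{i}$ and $D_{2}$ constrains only the top fibre $\C P^{1}$, the two divisors meet transversally, so that $N(Z_{i,j}\subset X_{i,j})\simeq N(D_{1})|_{Z_{i,j}}\oplus N(D_{2})|_{Z_{i,j}}$. For a section $\P(L_{1})\hookrightarrow\P(L_{1}\oplus L_{2})$ of a rank-$2$ projectivisation the normal bundle equals the relative tangent bundle at the section, $\mathrm{Hom}(L_{1},L_{2})\simeq\overline{L_{1}}L_{2}$. For $D_{1}$, with $L_{1}=\beta_{i-1}$ and $L_{2}=\underline{\C}$, this gives $\overline{\beta}_{i-1}$; for $D_{2}$, with $L_{1}=\zeta_{j}$ and $L_{2}=\overline{\beta}_{i}$, this gives $\overline{\zeta_{j}}\,\overline{\beta}_{i}$, which on $Z_{i,j}$ (where $\beta_{i}=\beta_{i-1}$) restricts to $\overline{\zeta_{j}}\,\overline{\beta}_{i-1}$. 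Their Whitney sum is $\overline{\zeta_{j}}\,\overline{\beta}_{i-1}\oplus\overline{\beta}_{i-1}$, which is precisely the asserted normal bundle.
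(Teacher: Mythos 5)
Your proof is correct. There is nothing in the paper to compare it against: the paper states Proposition \ref{pr:desc} with an empty proof environment, so your write-up supplies exactly the verification the author omitted. All three ingredients check out against the paper's conventions: realising $Z_{i,j}$ as the transverse intersection of $D_{1}$ (the preimage of the section $\lb\beta_{i}=\beta_{i-1}\rb\cong BF_{i-1}$ of $BF_{i}=\P(\beta_{i-1}\oplus\underline{\C})$) with the section $D_{2}=\lb\zeta_{j+1}=\zeta_{j}\rb$ of the top fibration $\P(\zeta_{j}\oplus\overline{\beta}_{i})$; the collapse $\beta_{i}^{*}|_{S}\simeq\underline{\C}$ (your geometric argument via the orthogonal complement $\C_{i}$ is cleaner than invoking \eqref{eq:1.5}, since cancellation of a common summand needs a word about first Chern classes); and the normal-bundle computation $N\bigl(\P(L_{1})\subset\P(L_{1}\oplus L_{2})\bigr)\simeq\overline{L_{1}}L_{2}$ applied to both sections, which yields $\overline{\zeta_{j}}\,\overline{\beta}_{i-1}\oplus\overline{\beta}_{i-1}$ as claimed.
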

\begin{proof}
\end{proof}
\begin{pr}
$X_{i,j}$ and $M_{i,j}$ are toric totally tangentially and normally split manifolds.
\end{pr}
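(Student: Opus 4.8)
The plan is to treat $X_{i,j}$ and $M_{i,j}$ separately and, for each, to establish the three properties (toric, totally tangentially split, totally normally split) in turn, reusing the machinery developed above. I begin with $X_{i,j}$. By construction it is a bounded flag fibre bundle over $BF_{i}$ in the sense of Definition \ref{def:bott_glob}, built from the line bundles $\overline{\beta}_{i},\overline{\beta}^{*}_{i},\dots,\overline{\beta}^{*}_{1}$ together with $j-i$ trivial summands. Since $BF_{i}$ is a non-singular projective toric variety and each of these line bundles is equivariant (the tautological bundles $\beta_{k}$, their orthogonal complements $\beta_{k}^{*}$, and the trivial bundles all carry natural torus-equivariant structures, the conjugations being absorbed into the omniorientation exactly as for $BR_{i,j}$ in Corollary \ref{cor:toric} and Remark \ref{rem:breq}), the iterated projectivisation \eqref{eq:bffbdef} endows $X_{i,j}$ with the structure of a non-singular projective toric variety, as recalled after Definition \ref{def:bott_glob}. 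Being toric, $X_{i,j}$ is quasitoric and hence totally tangentially split; alternatively, formula \eqref{eq:cstr} exhibits $TX_{i,j}$ stably as a sum of line bundles once the base $BF_{i}$ is known to be tangentially split, which is \eqref{eq:cstr} applied with $X=pt$.

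It remains to prove that $X_{i,j}$ is totally normally split, and for this I would apply Proposition \ref{pr:split}. Its two hypotheses are that the base $BF_{i}$ is totally normally split, which is Corollary \ref{cor:bfsplit}, and that the stably inverse bundle to each structure line bundle is totally split. The trivial summands are immediate. For $\overline{\beta}_{i}$ and each $\overline{\beta}^{*}_{k}$ I would first treat the holomorphic bundles $\beta_{i}$ and $\beta_{k}^{*}$ and then pass to conjugates via Lemma \ref{lm:66}. The key input is identity \eqref{eq:2}, namely $\beta_{i}\oplus\bigoplus_{k=1}^{i}\beta_{k}^{*}\simeq\underline{\C}^{i+1}$: it shows at once that the stable inverse of $\beta_{i}$ is the totally split bundle $\bigoplus_{k=1}^{i}\beta_{k}^{*}$, and that the stable inverse of any single $\beta_{k}^{*}$ is $\beta_{i}\oplus\bigoplus_{m\neq k}\beta_{m}^{*}$, again a Whitney sum of line bundles. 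Lemma \ref{lm:66} then yields the same conclusion for the conjugates $\overline{\beta}_{i}$ and $\overline{\beta}^{*}_{k}$, so Proposition \ref{pr:split} applies and $X_{i,j}$ is totally normally split.

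For $M_{i,j}=Bl_{Z_{i,j}}X_{i,j}$ the argument is shorter. Blowing up the non-singular projective toric variety $X_{i,j}$ along the invariant subvariety $Z_{i,j}$ (defined by the torus-invariant incidence conditions $\beta_{i}=\beta_{i-1}$, $\zeta_{j+1}=\zeta_{j}$) again yields a non-singular projective toric variety, so $M_{i,j}$ is toric and therefore totally tangentially split. For the normal splitting I would invoke Proposition \ref{pr:constr}: by Proposition \ref{pr:desc} the centre $Z_{i,j}$ has complex codimension $2$ in $X_{i,j}$, and $X_{i,j}$ has just been shown to be stably normally split, so Proposition \ref{pr:constr} gives that $M_{i,j}=Bl_{Z_{i,j}}X_{i,j}$ is stably normally split as well.

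The main obstacle I expect is the verification of the second hypothesis of Proposition \ref{pr:split} in a form robust enough to cover every structure bundle at once, i.e. producing an explicit totally split stable inverse for each conjugated tautological or orthogonal bundle; this is exactly where identity \eqref{eq:2} together with the closure properties of Lemma \ref{lm:66} does the work. A secondary point worth care is the meaning of ``toric'' in the presence of the conjugate bundles $\overline{\beta}_{i},\overline{\beta}^{*}_{k}$: the underlying smooth projective variety is genuinely toric, while the conjugations only twist the omnioriented stably complex structure, exactly as in the treatment of $BR_{i,j}$, so the toric conclusion is unaffected.
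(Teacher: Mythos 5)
Your proposal is correct and follows essentially the same route as the paper: Corollary \ref{cor:bfsplit} plus identity \eqref{eq:2} to verify the hypotheses of Proposition \ref{pr:split} for $X_{i,j}$, then Proposition \ref{pr:constr} for the blow-up $M_{i,j}$, with tangential splitting coming from the toric structure. Your extra care with the conjugate bundles (via Lemma \ref{lm:66}) and with the equivariance of the blow-up centre only makes explicit what the paper leaves implicit.
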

\begin{proof}
Due to the Corollary \ref{cor:bfsplit}, $BF_{i}$ is totally normally split. The formulas \eqref{eq:1.5},\eqref{eq:2} imply that $\overline{\beta}_{k},\overline{\beta^{*}}_{k},k=1,\dots,i$, have totally split stably inverse vector bundles. Then by Proposition \ref{pr:split} $X_{i,j}$ is totally normally splitting. The Proposition \ref{pr:constr} then implies that $M_{i,j}$ is also totally normally split. Remind that any smooth projective toric variety is always totally tangentially split (see \cite[Theorem 6.6]{da-ja-91} or \cite[Theorem 7.3.15]{bu-pa-15})
\end{proof}

The proof of the next statement is postponed until Section \ref{sec:compmi}.

\begin{pr}\label{pr:comp}
Let $1\leq i\leq j;\ n:=i+j$. Then one has:
\[
s_{n}(M_{i,j})=
\begin{cases}
(-1)^{n+1}\binom{n}{j}-\sum_{k=j}^{n-1}\binom{k}{j},\ \mbox{for } i\geq 2;\\
(-1)^{n+1}(n-1)-2,\ \mbox{for } i=1.
\end{cases}
\]
\end{pr}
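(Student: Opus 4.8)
The plan is to compute the Milnor number $s_n(M_{i,j})$ by relating it to the Milnor number of the blow-up center and the ambient bounded flag bundle $X_{i,j}$, using the behaviour of $s_n$ under blow-ups of codimension-$2$ subvarieties. Since $M_{i,j}=Bl_{Z_{i,j}}X_{i,j}$ with $Z_{i,j}\subset X_{i,j}$ of complex codimension $2$, I would first recall (or establish) the blow-up formula for the characteristic number $s_n$. For a blow-up of a nonsingular variety along a smooth center $Z$ of codimension $2$, the exceptional divisor is a $\C P^1$-bundle over $Z$, and the tangent bundle of $Bl_Z X$ differs from that of $X$ by a controlled term supported on the exceptional divisor. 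Thus I expect a formula of the shape
\begin{equation}
s_n(M_{i,j}) = s_n(X_{i,j}) + (\text{correction from the exceptional divisor over } Z_{i,j}),
\end{equation}
where the correction term is itself a characteristic number evaluated on $Z_{i,j}$ and its normal bundle $\overline{\zeta_j}\,\overline{\beta_{i-1}}\oplus\overline{\beta_{i-1}}$ (given in Proposition \ref{pr:desc}).

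Next I would evaluate $s_n(X_{i,j})$ directly. Here Proposition \ref{pr:snbfb} is the main tool: $X_{i,j}$ is the bounded flag bundle $BF(\overline{\beta}_i,\overline{\beta}^*_i,\dots,\overline{\beta}^*_1,\underline{\C},\dots,\underline{\C})$ over $BF_i$, so its Milnor number is expressed as a coupling against $[BF_i]$ of a product of the form $(1+x_{k+1})^{n+k-1}\prod(1+x_m)^{-1}$, where the $x_m$ are the first Chern classes of the constituent bundles. The Chern classes here are $c_1(\overline{\beta}^*_q)$ and $c_1(\overline{\beta}_i)$, which by the identities \eqref{eq:1.5} and \eqref{eq:2} satisfy linear relations in $H^2(BF_i)$; in particular many of the trivial-bundle factors contribute $x_m=0$, collapsing the corresponding $(1+x_m)^{-1}$ factors to $1$. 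The cohomology ring $H^*(BF_i)$ is that of an iterated $\C P^1$-bundle (a Bott tower), so the relevant couplings reduce to combinatorial identities, and I expect the binomial sums $\sum_{k=j}^{n-1}\binom{k}{j}$ and the term $(-1)^{n+1}\binom{n}{j}$ to emerge from expanding the relevant power series and extracting the top-degree coefficient.

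The correction term requires computing a characteristic number on $Z_{i,j}$, which by Proposition \ref{pr:desc} is again a bounded flag bundle (over $BF_{i-1}$), with a known normal bundle. I would again apply Proposition \ref{pr:snbfb} (or its underlying Segre-class computation) to $Z_{i,j}$, together with the explicit normal data, to express the correction as another binomial coupling. The case split between $i\geq 2$ and $i=1$ in the statement signals that the structure of $Z_{i,j}$ degenerates when $i=1$: there $BF_{i-1}=BF_0=pt$, so $Z_{1,j}$ is an honest bounded flag variety over a point and several of the $(1+x_m)^{-1}$ factors become trivial, forcing a separate evaluation that produces the cleaner expression $(-1)^{n+1}(n-1)-2$. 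Finally I would assemble the two pieces and simplify; the main obstacle will be bookkeeping the Chern-class substitutions via \eqref{eq:1.5}, \eqref{eq:2} correctly and keeping track of signs coming from conjugation (the bars on $\beta$, $\beta^*$) when extracting the top binomial coefficients, since a single sign or index slip would corrupt the final closed form.
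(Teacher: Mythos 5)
Your plan is essentially the paper's own proof: the paper invokes Hitchin's blow-up formula (Proposition \ref{prop:hit}) to write $[M_{i,j}]=[X_{i,j}]-[\P(\nu(Z_{i,j}\subset X_{i,j})\oplus\overline{\C})]$, computes $s_n(X_{i,j})$ from Proposition \ref{pr:snbfb} together with the relation \eqref{eq:id1} in $H^*(BF_i;\Z)$ (Proposition \ref{pr:snbott}), and computes the correction term by a Segre-class calculation over $Z_{i,j}$ using the normal data of Proposition \ref{pr:desc} (Proposition \ref{pr:sndiff}), with exactly the $i=1$ degeneration ($Z_{1,j}=BF_{j-1}$ over $BF_0=pt$) that you identify. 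The one detail to watch when you "recall (or establish)" the blow-up formula is that the correction is not the exceptional divisor but the $\C P^{2}$-bundle $\P(\nu\oplus\overline{\C})$ equipped with Hitchin's non-standard stably complex structure \eqref{eq:stably-complex-nonstandard}, which is what produces the sign and the tangent-bundle formula used in Proposition \ref{pr:sndiff}.
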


For $n$ even, let $a_{0,n}:=s_{n}(-[M_{1,n}]),a_{1,n-1}=s_{n}([M_{1,n}])$. For $n$ odd, let $a_{0,n}:=s_{n}([M_{1,n}]+2[BF_{n}]),a_{1,n-1}=s_{n}([M_{1,n}]+[BF_{n}])$. It follows from \eqref{eq:1.1} and Proposition \ref{pr:comp} that $a_{0,n}=n+1$. For $0<i\leq j$ let $a_{i,j}:=s_{i+j}([M_{i,j}])$. That is, for $0<i\leq j$ one has:
\[
a_{i,j}=(-1)^{n+1}\binom{n}{j}-\sum_{k=j}^{n-1}\binom{k}{j},\ n=i+j.
\]
For any $0\leq i\leq j$, let $N_{i,j}$ be a quasitoric manifold representing the unitary cobordism classes above, so that $s_{n}([N_{i,j}])=a_{i,j}$, $i+j=n$. (See Proposition \ref{pr:conn}.)

\section{Proof of the Theorem}
In this Section the proof of Theorem \ref{thm:main} is given. We use standard facts from unitary cobordism theory (for example, see \cite{so-us-16}). An auxiliary statement from Number Theory is needed (see \cite{fi-47} for the proof).

\begin{thm}[Lucas]\label{lucas}
Let $p$ be prime, and let
$$
\begin{gathered}
n=n_{0}+n_{1}p+\dots+n_{r-1}p^{r-1}+n_{r}p^{r},\\
m=m_{0}+m_{1}p+\dots+m_{r-1}p^{r-1}+m_{r}p^{r}
\end{gathered}
$$
be the base $p$ expansions of the positive integers $n$ and $m$. Then one has
$$
\binom{n}{m}\equiv\binom{n_{0}}{m_{0}}\binom{n_{1}}{m_{1}}\dots\binom{n_{r}}{m_{r}}\pmod{p}.
$$
\end{thm}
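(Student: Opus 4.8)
The plan is to prove this via the generating-function (Frobenius) method in the polynomial ring over $\Z$, reducing everything modulo $p$. The foundational observation I would establish first is the ``freshman's dream'': for a prime $p$ one has $\binom{p}{k}\equiv 0\pmod p$ whenever $0<k<p$, since $p$ divides the numerator of $\binom{p}{k}=\frac{p!}{k!\,(p-k)!}$ but divides neither factor in the denominator. Consequently $(1+x)^{p}\equiv 1+x^{p}\pmod p$, and iterating this congruence gives
\[
(1+x)^{p^{i}}\equiv 1+x^{p^{i}}\pmod p
\]
for every $i\ge 0$.

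Next I would exploit the base-$p$ expansion $n=\sum_{i=0}^{r}n_{i}p^{i}$ to factor
\[
(1+x)^{n}=\prod_{i=0}^{r}(1+x)^{n_{i}p^{i}}\equiv\prod_{i=0}^{r}\bigl(1+x^{p^{i}}\bigr)^{n_{i}}\pmod p,
\]
where the congruence applies the previous step factorwise. Expanding each factor by the binomial theorem,
\[
\bigl(1+x^{p^{i}}\bigr)^{n_{i}}=\sum_{k_{i}=0}^{n_{i}}\binom{n_{i}}{k_{i}}x^{k_{i}p^{i}},
\]
so the coefficient of $x^{m}$ in the product is $\sum\prod_{i=0}^{r}\binom{n_{i}}{k_{i}}$, the sum ranging over all tuples $(k_{0},\dots,k_{r})$ with $0\le k_{i}\le n_{i}$ and $\sum_{i}k_{i}p^{i}=m$.

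Finally I would match coefficients. On the left-hand side the coefficient of $x^{m}$ is $\binom{n}{m}$. On the right-hand side, the crucial point --- and the step I expect to carry the real content --- is the \emph{uniqueness of the base-$p$ representation}: since each $k_{i}$ satisfies $0\le k_{i}\le n_{i}\le p-1$, the equation $\sum_{i}k_{i}p^{i}=m$ forces $k_{i}=m_{i}$ for all $i$, where $m=\sum_{i}m_{i}p^{i}$ is the base-$p$ expansion of $m$. Thus at most one tuple contributes, giving coefficient $\prod_{i=0}^{r}\binom{n_{i}}{m_{i}}$; the degenerate case where some $m_{i}>n_{i}$ admits no valid tuple, which is consistent, since then $\binom{n_{i}}{m_{i}}=0$ and the product vanishes. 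Comparing the two expressions for the coefficient of $x^{m}$ yields $\binom{n}{m}\equiv\prod_{i=0}^{r}\binom{n_{i}}{m_{i}}\pmod p$, as claimed. The only genuine obstacle is keeping the bookkeeping of the base-$p$ digit matching precise, including the boundary case $m_{i}>n_{i}$; everything else is a formal manipulation of congruences in $\Z[x]$.
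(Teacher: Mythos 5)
Your proof is correct: the freshman's-dream congruence $(1+x)^{p^{i}}\equiv 1+x^{p^{i}}\pmod p$, the digit-wise factorization of $(1+x)^{n}$, and the coefficient matching via uniqueness of base-$p$ representation (including the degenerate case $m_{i}>n_{i}$) together constitute a complete argument. The paper gives no proof of its own, deferring to the reference \cite{fi-47}, and the proof found there is precisely this generating-function argument, so your approach coincides with the one the paper relies on.
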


The proof of the next technical Proposition is postponed until Section \ref{sec:nt}.
\begin{pr}\label{pr:pmain}
Let $s\geq 2$. Then for any prime $p$ one has:
\[
\sum_{k=p^{s}-p^{s-1}-1}^{p^{s}-1}\binom{k}{p^{s}-p^{s-1}-1}\equiv p\pmod {p^{2}}.
\]
\end{pr}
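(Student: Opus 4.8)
The plan is to collapse the sum to a single binomial coefficient and then analyze that coefficient modulo $p^2$. Writing $m := p^s - p^{s-1} - 1$, the summation runs over $k$ from $m$ to $p^s - 1$, so the hockey-stick identity $\sum_{k=m}^{N}\binom{k}{m} = \binom{N+1}{m+1}$ with $N = p^s - 1$ gives
\[
\sum_{k=p^s-p^{s-1}-1}^{p^s-1}\binom{k}{p^s-p^{s-1}-1} = \binom{p^s}{p^s - p^{s-1}} = \binom{p^s}{p^{s-1}},
\]
the last equality being the symmetry $\binom{n}{k}=\binom{n}{n-k}$. Thus the entire problem reduces to computing $\binom{p^s}{p^{s-1}}\pmod{p^2}$.

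Next I would extract a single factor of $p$ from this coefficient using the absorption identity $\binom{n}{k}=\tfrac{n}{k}\binom{n-1}{k-1}$, namely
\[
\binom{p^s}{p^{s-1}} = \frac{p^s}{p^{s-1}}\binom{p^s-1}{p^{s-1}-1} = p\,\binom{p^s-1}{p^{s-1}-1}.
\]
This is the crucial step: it isolates exactly one power of $p$ (consistent with Kummer's theorem, since adding $p^{s-1}$ and $p^s-p^{s-1}$ in base $p$ produces a single carry), reducing matters to showing that the cofactor $\binom{p^s-1}{p^{s-1}-1}$ is congruent to $1$ modulo $p$. Indeed, if $\binom{p^s-1}{p^{s-1}-1}=1+pt$, then multiplying by $p$ gives $p+p^2 t\equiv p\pmod{p^2}$.

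To verify the cofactor I would invoke Lucas' theorem (Theorem \ref{lucas}). In base $p$, the number $p^s-1$ has all of its $s$ lowest digits equal to $p-1$, while $p^{s-1}-1$ has its lowest $s-1$ digits equal to $p-1$ and its digit in position $s-1$ equal to $0$. Hence every digitwise factor is either $\binom{p-1}{p-1}=1$ or $\binom{p-1}{0}=1$, so that $\binom{p^s-1}{p^{s-1}-1}\equiv 1\pmod{p}$. Combining this with the previous display yields $\binom{p^s}{p^{s-1}}\equiv p\pmod{p^2}$, which is the assertion of the Proposition.

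The only genuinely delicate point — and the step I would check most carefully — is the passage from the mod-$p$ congruence furnished by Lucas' theorem to the desired mod-$p^2$ statement. This passage is legitimate precisely because the absorption identity strips off \emph{exactly one} factor of $p$; were the extracted $p$-power different, the reduction to a mod-$p$ computation would fail, so confirming that $v_p\bigl(\binom{p^s}{p^{s-1}}\bigr)=1$ (via Kummer's single-carry count) is the key structural fact underpinning the whole argument.
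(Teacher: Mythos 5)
Your proof is correct, and it takes a genuinely different --- and substantially more elementary --- route than the paper. You collapse the entire sum via the hockey-stick identity to the single coefficient $\binom{p^{s}}{p^{s-1}}$, extract exactly one factor of $p$ by the absorption identity $\binom{p^{s}}{p^{s-1}}=p\binom{p^{s}-1}{p^{s-1}-1}$, and then only need the \emph{mod-$p$} congruence $\binom{p^{s}-1}{p^{s-1}-1}\equiv 1\pmod p$, which Lucas' theorem (Theorem \ref{lucas}, already available in the paper) gives immediately since every digitwise factor is $\binom{p-1}{p-1}$ or $\binom{p-1}{0}$; writing the cofactor as $1+pt$ and multiplying by $p$ then legitimately upgrades this to the desired mod-$p^{2}$ statement. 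The paper instead keeps the sum intact and evaluates it piecewise modulo $p^{2}$: the initial term contributes $1$, the final term $\binom{p^{s}-1}{p^{s}-p^{s-1}-1}$ is shown to be $\equiv p-1\pmod{p^{2}}$ (Lemma \ref{lm:3.2}), and the middle block is shown to vanish modulo $p^{2}$ (Lemma \ref{lm:3.1}); those computations rely on Granville's theorem on binomial coefficients modulo prime powers (Theorem \ref{thm:nt}), Kummer's theorem, and a chain of auxiliary congruences for products modulo $p^{2}$ (Lemmas \ref{lm:tech}, \ref{lm:tech4}, \ref{lm:ressq}). What the paper's heavier machinery buys is finer term-by-term residue information (e.g.\ that the middle block vanishes mod $p^{2}$, not merely that the total is $p$), but none of that extra precision is used elsewhere in the paper, so your argument could replace essentially all of Section \ref{sec:nt}. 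One small remark on framing: your closing appeal to Kummer's single-carry count, while a nice consistency check, is not actually load-bearing --- absorption plus Lucas already force $v_{p}\bigl(\binom{p^{s}}{p^{s-1}}\bigr)=1$, so nothing further needs to be confirmed.
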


\begin{pr}
Suppose that $n=p^{s}-1$ for $s\geq 1$ and prime $p$. Then there exists a totally tangentially and normally split quasitoric manifold which represents a multiplicative generator in the unitary cobordism ring $\Omega^{*}_{U}$ in degree $2n$.
\end{pr}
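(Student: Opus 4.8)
The plan is to reduce everything to a single characteristic number. Since $n+1=p^{s}$ is a prime power, a standard form of Milnor's criterion (see \cite{so-us-16}) tells us that a cobordism class of degree $2n$ is a multiplicative generator of $\Omega^{*}_{U}$ if and only if its Milnor number equals $\pm p$: the homomorphism $s_{n}$ annihilates decomposables and its image on the degree-$2n$ component is exactly $p\Z$, so a class generates the free summand $\Omega^{*}_{U}/(\text{decomposables})\cong\Z$ in that degree precisely when $s_{n}=\pm p$. Hence it suffices to exhibit a quasitoric, totally normally split manifold of complex dimension $n$ whose Milnor number equals $p$; such a manifold is automatically totally tangentially split, being quasitoric.

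First I would bring in the manifolds $N_{i,j}$, $0\le i\le j$, $i+j=n$, which are quasitoric and totally normally split with $s_{n}(N_{i,j})=a_{i,j}$. By Proposition \ref{pr:conn}, reversing omniorientations and iterating box sums produces, for any integer combination $\sum c_{i,j}a_{i,j}$, a single quasitoric totally normally split manifold whose Milnor number is that combination (as $s_{n}$ is additive). Therefore the whole statement collapses to the arithmetic identity
\[
\gcd_{0\le i\le j,\ i+j=n}(a_{i,j})=p.
\]
Granting it, I would realize $p$ as such a combination, assemble the corresponding manifold, and conclude by Milnor's criterion.

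The gcd computation is the core of the argument, and I would organize it in three steps. Divisibility of every $a_{i,j}$ by $p$ is automatic, since $a_{i,j}=s_{n}(N_{i,j})$ lies in the image $p\Z$ of $s_{n}$; and no other prime can divide the gcd because $a_{0,n}=n+1=p^{s}$. It then remains to produce a single index with $p^{2}\nmid a_{i,j}$, pinning the $p$-adic valuation of the gcd to $1$. Summing the formula of Proposition \ref{pr:comp} by the hockey-stick identity $\sum_{k=j}^{n-1}\binom{k}{j}=\binom{n}{j+1}$ and then applying Pascal's rule gives, for $n$ even and $2\le i\le j$, the clean expression $a_{i,j}=-\binom{n+1}{j+1}$. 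For odd $p$ (so $n$ is even) and $s\ge2$, the choice $i=p^{s-1}$, $j=p^{s}-p^{s-1}-1$ is admissible (the inequality $i\le j$ holds precisely because $p\ge3$) and yields $a_{i,j}=-\binom{p^{s}}{p^{s-1}}\equiv-p\pmod{p^{2}}$ by Proposition \ref{pr:pmain}; thus $p\mid a_{i,j}$ but $p^{2}\nmid a_{i,j}$.

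The boundary cases I would dispatch by hand. For odd $p$ and $s=1$ one already has $a_{0,n}=p$, so the gcd is $p$ at once. For $p=2$ the index above is inadmissible ($i>j$), so instead I use $a_{1,n-1}=n-1=2^{s}-2=2(2^{s-1}-1)$ (which follows from Proposition \ref{pr:comp} together with $s_{n}(BF_{n})=2$ for odd $n$, cf. Proposition \ref{pr:snbfb}); for $s\ge2$ this has $2$-adic valuation exactly $1$, while the case $n=1$ is represented by $\C P^{1}$. In every case the $p$-adic valuation of the gcd is $1$ and no further prime occurs, so $\gcd=p$ and the proof closes. The delicate point, and the main obstacle, is exactly this last step: converting the Milnor-number formula of Proposition \ref{pr:comp} into a single central binomial coefficient and controlling its residue modulo $p^{2}$, which forces the split between odd $p$ (where Proposition \ref{pr:pmain}, resting on Theorem \ref{lucas}, is indispensable) and $p=2$ (where a direct and simpler estimate suffices).
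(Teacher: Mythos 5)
Your proof is correct and takes essentially the same route as the paper: the same family $N_{i,j}$, the same choice $i=p^{s-1}$, $j=p^{s}-p^{s-1}-1$ for odd $p$ with $s\ge 2$ (your hockey-stick reduction of $a_{i,j}$ to $-\binom{p^{s}}{p^{s-1}}$ is just a restatement of the sum in Proposition \ref{pr:pmain}), and the same assembly of the final manifold via Proposition \ref{pr:conn}. The only divergence is cosmetic: for $p=2$, $s\ge 2$ the paper simply takes $BF_{n}$, whose Milnor number is $2$ by \eqref{eq:1.1}, instead of your gcd argument with $a_{0,n}=2^{s}$ and $a_{1,n-1}=2^{s}-2$.
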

\begin{proof}
If $s=1$, then the desired manifold is $N_{0,n}$ with Milnor number equal to $n+1=p$. Otherwise, for $p=2$ we take $BF_{n}$ with Milnor number equal to $2$ (see \eqref{eq:1.1}). Now suppose $s\geq 2,p>2$. Then $p^{s-1}<p^{s}-p^{s-1}-1$. According to Proposition \ref{pr:pmain}, $gcd(a_{0,n},a_{p^{s-1},p^{s}-p^{s-1}-1})=p$. Hence, the required quasitoric manifold with Milnor number $p$ can be constructed from $N_{0,n},N_{p^{s-1},p^{s}-p^{s-1}-1}$ (see Proposition \ref{pr:conn}).
\end{proof}

Further we use denotation $n=n_{0}+n_{1}p+\dots+n_{d}p^{d}=\overline{n_{d},\dots, n_{0}}_{p}$ in base $p$. Also the number of digits of the numbers written in base $p$ is $s$ below, so it is omitted.

\begin{pr}
Let $n$ be s.t. $n+1$ is not a power of a prime. Then there exists a quasitoric totally tangentially and normally split manifold which represents a multiplicative generator in the unitary cobordism ring $\Omega^{*}_{U}$ in degree $2n$.
\end{pr}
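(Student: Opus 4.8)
The plan is to reduce the statement to a single arithmetic fact about the Milnor numbers $a_{i,j}$ and then to settle that fact with Lucas' theorem. By Milnor's criterion, a class in $\Omega^{*}_{U}$ of degree $2n$ is a multiplicative generator exactly when its Milnor number $s_{n}$ equals $\pm 1$; this is where the hypothesis that $n+1$ is not a power of a prime enters (for $n+1=p^{k}$ one would instead need $\pm p$, as in the preceding Proposition). The manifolds $N_{i,j}$ with $0\le i\le j,\ i+j=n$ are quasitoric and totally tangentially and normally split, with $s_{n}([N_{i,j}])=a_{i,j}$, and by Proposition \ref{pr:conn} the box sum and the sign change preserve this class while acting additively on $s_{n}$. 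Hence any integer combination $\sum c_{i,j}[N_{i,j}]$ is realised by a quasitoric totally tangentially and normally split manifold whose Milnor number is $\sum c_{i,j}a_{i,j}$. Therefore it suffices to prove $\gcd\{a_{i,j}:0\le i\le j,\ i+j=n\}=1$: Bezout then furnishes a combination with $s_{n}=1$.

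First I would record a closed form for the $a_{i,j}$. The hockey-stick identity $\sum_{k=j}^{n-1}\binom{k}{j}=\binom{n}{j+1}$ turns Proposition \ref{pr:comp} into
\[
a_{i,j}=(-1)^{n+1}\binom{n}{j}-\binom{n}{j+1}\qquad(i\ge 2),
\]
so that $a_{i,j}=-\binom{n+1}{j+1}$ for $n$ even (Pascal's rule) and $a_{i,j}=\binom{n}{j}-\binom{n}{j+1}$ for $n$ odd. Since $a_{0,n}=n+1$, the number $d:=\gcd\{a_{i,j}\}$ divides $n+1$, so it is enough to show $d$ is coprime to every prime $p\mid n+1$; equivalently, for each such $p$ I must exhibit one index $(i,j)$ with $2\le i\le j$ and $p\nmid a_{i,j}$.

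The heart of the matter is producing these indices via Lucas' theorem, organised by the parity of $n$. If $n$ is even then $n+1$ is odd, so $p$ is odd and $n+1=p^{v}m$ with $m>1$ coprime to $p$, whence $m\ge 3$; deleting the lowest nonzero base-$p$ digit gives $k:=n+1-p^{v}$, which is digitwise dominated by $n+1$, so $\binom{n+1}{k}\not\equiv 0\pmod p$, and one checks that $(i,j)=(p^{v},n-p^{v})$ lies in the admissible range, making $a_{i,j}=-\binom{n+1}{k}$ coprime to $p$. If $n$ is odd I dispatch the odd primes uniformly: using $n\equiv -1\pmod p$,
\[
a_{2,n-2}=\binom{n}{2}-n=\tfrac{n(n-3)}{2}\equiv 2\pmod p,
\]
which is nonzero for every odd $p\mid n+1$. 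The remaining prime $p=2$ (which divides $n+1$ because $n$ is odd) is handled as in the even case: modulo $2$ one has $a_{i,j}\equiv\binom{n}{j}+\binom{n}{j+1}=\binom{n+1}{j+1}$, and taking $(i,j)=(2^{v},n-2^{v})$, where $2^{v}$ is the largest power of $2$ dividing $n+1$, again yields an odd binomial coefficient by Lucas.

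I expect the main obstacle to be exactly this digit bookkeeping: in each case one must confirm that the index chosen to make $\binom{n+1}{\cdot}$ (or $\binom{n}{j}$) a nonzero residue simultaneously obeys the geometric constraint $2\le i\le j\le n-2$ under which the formula for $a_{i,j}$ is valid. This is precisely where the hypothesis that $n+1$ is not a prime power is used a second time: it forces $m\ge 2$ (indeed $m\ge 3$ in the relevant subcases), which is what keeps $n+1-p^{v}$ inside the admissible window. Once $d=1$ is established, choosing integers $c_{i,j}$ with $\sum c_{i,j}a_{i,j}=1$ and assembling the corresponding manifold through box sums and sign changes (Proposition \ref{pr:conn}) yields a quasitoric totally tangentially and normally split representative with $s_{n}=1$, which is a multiplicative generator in degree $2n$, completing the proof.
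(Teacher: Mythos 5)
Your proof is correct, and it shares the paper's skeleton: both use Milnor's criterion, Proposition \ref{pr:conn} and the value $a_{0,n}=n+1$ to reduce the statement to exhibiting, for each prime $p\mid n+1$, an index $(i,j)$ with $i\ge 2$, $i+j=n$ and $p\nmid a_{i,j}$, and both settle this with Lucas' theorem. Where you genuinely diverge is the combinatorial core. The paper keeps the raw formula $a_{i,j}=(-1)^{n+1}\binom{n}{j}-\sum_{k=j}^{n-1}\binom{k}{j}$ and splits into three cases according to the base-$p$ digits of $n$: top digit $1$ with all lower digits $p-1$; top digit $x>1$ with all lower digits $p-1$; and the generic case, where $j$ is chosen so that every $\binom{k}{j}$ with $j<k\le n$ vanishes modulo $p$ — yielding the residues $2$, $\pm x-1$ and $-1$ respectively. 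You instead compress the sum by the hockey-stick identity into the closed forms $-\binom{n+1}{j+1}$ for $n$ even and $\binom{n}{j}-\binom{n}{j+1}$ for $n$ odd, and organise the cases by parity: for $n$ even, and for $p=2$ when $n$ is odd, a single Lucas evaluation at $j+1=n+1-p^{v}$ (with $p^{v}$ the exact power of $p$ dividing $n+1$) suffices, since the relevant digit of $n+1$ is nonzero; for $n$ odd and $p$ odd, the fixed index $(2,n-2)$ gives $a_{2,n-2}=n(n-3)/2\equiv 2\pmod{p}$ with no digit analysis at all. Both arguments need the non-prime-power hypothesis beyond the reduction step — the paper to exclude degenerate digit patterns and force $q>2$ in its first case, you to guarantee $m\ge 3$ and hence $2\le p^{v}\le n-p^{v}$ (and $n\ge 5$ for the index $(2,n-2)$) — and both are complete; yours is the more economical, mainly because of the hockey-stick/Pascal compression, which the paper never performs, and the digit-free treatment of odd primes for odd $n$.
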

\begin{proof}
It is enough to show that for any prime divisor $q$ of $n+1$ there exists a quasitoric totally tangentially and normally split manifold with Milnor number not divisible by $q$. Quasitoric manifolds $N_{i,j}$ from the previous Section are used. 
Consider the following cases.

$1)$ $n=\overline{1, q-1\dots,q-1}_{q}$. Then $n+1=2q^{s-1}$ is even, and $q>2$ due to the assumption on $n$. Let
\[
j=\overline{1,q-1,\dots,q-1,0}_{q}.
\]
Notice that $n-j<j$. Using Lucas theorem one obtains $a_{n-j,j}\equiv 1-(q-1)\equiv2\not\equiv 0\pmod q$.

$2)$ $n=\overline{x_{s-1}, q-1\dots,q-1}_{q}, x_{s-1}>1$. Then $1<x_{s-1}<q-1$. Let
\[
j=x_{s-1}q^{s-1}-1=\overline{x_{s-1}-1,q-1,\dots,q-1}_{q}.
\]
Notice that $n-j=q^{s-1},$ so $n-j<j$, because $1<x_{s_{1}}$. Using Lucas theorem one obtains $a_{n-j,j}\equiv \pm x_{s-1}-1\not\equiv 0\pmod q$.

$3)$
\[
n=\overline{x_{s-1},\dots,x_{a},\dots,x_{b},q-1,\dots,q-1}_{q},
\]
where $0<x_{a};\ x_{b}<q-1;\ b<a;\ 0<x_{s-1}$ (so $q^{b}$ is the highest power of $q$ dividing $n+1$). Let
\[
j=\overline{x_{s-1},\dots,x_{a}-1,q-1\dots,q-1}_{q},
\]
where $x_{a}-1$ is in the $a$-th digit. Then
\[
n-j=\overline{0,\dots,x_{a-1},\dots,x_{b}+1,0,\dots,0}_{q}<j.
\]
By Lucas Theorem
\[
\binom{k}{j}\equiv 0\pmod q\ \mbox{for}\  j<k\leq n,
\]
and $\binom{j}{j}=1$. Hence, $a_{n-j,j}\equiv-1\pmod q$.
\end{proof}

\section{Computation of Milnor numbers}\label{sec:compmi}
In the following Section I compute the Milnor number of the previously defined toric varieties $X_{i,j}$ and their equivariant blow-ups $M_{i,j}$. For this purpose, some results are used (cf. \cite{so-us-16} for more details).

\begin{pr}[{Hitchin \cite[\S 4.5]{hi-74}}]\label{prop:hit}
Let $X$ and $Z$, with $Z\subset X$, be smooth compact complex manifolds of dimensions $n$ and $k$, resp. Consider a blow-up $\pi: Bl_{Z}X\rightarrow X$ along $Z$. Then the difference of classes of manifolds $Bl_{Z}X$ and $X$ in the unitary cobordism ring is:
\begin{equation}\label{eq:cobordism_blowup_identity}
[Bl_{Z}X] - [X] = -[\mathbb{P}(\nu(Z\subset X)\oplus\overline{\mathbb{C}})],
\end{equation}
where $\nu(Z\subset X)$ is a normal bundle to $Z$, and the projectivisation $\mathbb{P}(\nu(Z\subset X)\oplus\overline{\mathbb{C}})$ is equipped with the non-standard stably complex structure
\begin{equation}\label{eq:stably-complex-nonstandard}
T\mathbb{P}(\nu(Z\subset X)\oplus\overline{\mathbb{C}})\oplus\underline{\C}\simeq (p^*\nu(Z\subset X)\otimes \gamma)\oplus\gamma^*\oplus p^*TB,
\end{equation}
where $\gamma\to \mathbb{P}(\nu(Z\subset X)\oplus\overline{\mathbb{C}})$ is the corresponding tautological line bundle.
\end{pr}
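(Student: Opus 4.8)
The plan is to compute the difference $[Bl_{Z}X]-[X]$ by degenerating $X$ to $Bl_{Z}X$ via the \emph{deformation to the normal cone}, and then to identify the resulting correction term with a suitably twisted projective bundle over $Z$. First I would note that the difference is \emph{local}: since $\pi\colon Bl_{Z}X\to X$ restricts to an isomorphism over $X\setminus Z$, the classes $[Bl_{Z}X]$ and $[X]$ can differ only by a contribution supported in an arbitrarily small tubular neighbourhood of $Z$, which is modelled on the total space of $\nu:=\nu(Z\subset X)$. Hence it suffices to produce a complex cobordism that is itself supported near $Z$ and depends only on $\nu$ and $TZ$.

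Second, I would form the blow-up $\widetilde{M}=Bl_{Z\times\{0\}}(X\times\C)$ together with its projection $f\colon\widetilde{M}\to\C$. For $t\neq 0$ the fibre $f^{-1}(t)$ is a copy of $X$, while the central fibre $f^{-1}(0)$ is the reduced normal-crossing divisor $Bl_{Z}X\cup\P(\nu\oplus\C)$, the two components meeting transversally along the exceptional divisor $\P(\nu)$; here $\P(\nu\oplus\C)$ is the exceptional divisor of $\widetilde{M}$ (the normal bundle of $Z\times\{0\}$ in $X\times\C$ being $\nu\oplus\C$), and $Bl_{Z}X$ is the proper transform of $X\times\{0\}$. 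Restricting $f$ over a closed disc yields a cobordism between the smooth fibre $X$ and this singular central fibre. I would then convert this into an identity in $\Omega^{*}_{U}$: for a semistable degeneration into two smooth components $A\cup_{D}B$ meeting along $D$ one has a relation of the shape $[X]=[A]+[B]-[\text{gluing term along }D]$, and in the present case the gluing term and the standard structure on $\P(\nu\oplus\C)$ combine so that the \emph{total} correction is a single copy of $\P(\nu\oplus\C)$ carrying a non-standard stably complex structure. This is the origin of the minus sign and of the conjugation $\overline{\C}$ in \eqref{eq:cobordism_blowup_identity}.

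Third, I would pin down that non-standard structure, i.e.\ establish \eqref{eq:stably-complex-nonstandard}. Applying the standard tangent formula to the projective bundle $p\colon\P(\nu\oplus\C)\to Z$, namely the relative Euler sequence $T_{p}\oplus\underline{\C}\simeq p^{*}(\nu\oplus\C)\otimes\gamma^{*}$ together with $T\P(\nu\oplus\C)\simeq T_{p}\oplus p^{*}TZ$, gives the \emph{standard} complex structure $T\P(\nu\oplus\C)\oplus\underline{\C}\simeq(p^{*}\nu\otimes\gamma^{*})\oplus\gamma^{*}\oplus p^{*}TZ$. The point is that the degeneration above does not induce this structure on the exceptional divisor: the normal-cone directions coming from $\nu$ enter with the opposite (conjugate) orientation, which replaces the summand $p^{*}\nu\otimes\gamma^{*}$ by $p^{*}\nu\otimes\gamma$ while leaving the $\gamma^{*}$ from the trivial factor untouched. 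This is precisely \eqref{eq:stably-complex-nonstandard} (with the base $Z$ being the $B$ of the statement), and it is recorded by writing $\overline{\C}$ in place of $\C$.

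The step I expect to be the main obstacle is exactly this last bookkeeping: turning the normal-crossing central fibre into an honest identity in $\Omega^{*}_{U}$, and verifying that the stably complex structure \emph{induced} by the degeneration on $\P(\nu\oplus\C)$ is the twisted one in \eqref{eq:stably-complex-nonstandard} rather than the standard one, with every conjugation and tautological twist in its correct place. As a sanity check on the signs I would test the formula in the simplest instance $Z=\mathrm{pt}\subset X=\C P^{2}$, where $Bl_{Z}X=\C P^{2}\#\,\overline{\C P^{2}}$ and $\P(\nu\oplus\C)=\C P^{2}$, and confirm that the right-hand side of \eqref{eq:cobordism_blowup_identity} reproduces the known cobordism class of the blow-up.
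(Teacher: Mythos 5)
Note first that the paper does not prove this Proposition: it is quoted verbatim from Hitchin \cite[\S 4.5]{hi-74} (a detailed account also appears in the cited \cite{so-us-16}), so there is no internal proof to compare against, and your attempt must stand on its own.

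As written, it does not stand: it is a strategy outline whose two load-bearing steps are missing. Your geometric setup is correct — in the deformation to the normal cone $\widetilde M=Bl_{Z\times\{0\}}(X\times\C)$ the central fibre is $Bl_{Z}X\cup_{\P(\nu)}\P(\nu\oplus\C)$ meeting along $\P(\nu)$, with the normal bundle data you state. But, first, restricting $f$ over a closed disc does \emph{not} yield a cobordism between $X$ and the central fibre: the central fibre is singular, the family is trivial away from $0$, and the preimage of the disc is merely a manifold with boundary $X\times S^{1}$, which by itself gives no relation in $\Omega^{*}_{U}$. Converting a normal-crossing degeneration into the identity $[X]=[A]+[B]-[\P_{D}(\underline{\C}\oplus N_{D/A})]$ is a genuine theorem (the double point relation of Levine--Pandharipande, or equivalently a perturbation-of-sections argument using the formal group law); you invoke ``a relation of the shape'' without stating or proving it. Second, even granting that relation, it produces the correction term $[\P_{\P(\nu)}(\underline{\C}\oplus\gamma)]$ — a $\C P^{1}$-bundle over $\P(\nu)$ with its \emph{standard} complex structure — so what would remain to prove is the identity $[\P(\nu\oplus\C)]-[\P_{\P(\nu)}(\underline{\C}\oplus\gamma)]=[\P(\nu\oplus\overline{\C})]$, the right-hand side carrying the structure \eqref{eq:stably-complex-nonstandard}. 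Your justification for this — that the degeneration ``induces'' a conjugate orientation on the normal-cone directions of the exceptional component — is not meaningful in this framework: a singular central fibre induces no stably complex structure on its components beyond their own complex structures, so the replacement of $p^{*}\nu\otimes\gamma^{*}$ by $p^{*}\nu\otimes\gamma$ must be extracted by an actual computation, or by a direct construction in the style of Hitchin (decompose $X=(X\setminus D(\nu))\cup_{S(\nu)}D(\nu)$ and $Bl_{Z}X=(X\setminus D(\nu))\cup_{S(\nu)}D(\gamma)$, build the cobordism by hand, and read off the stable tangent bundle of the resulting correction manifold). You flag exactly this bookkeeping as ``the main obstacle'', and that self-assessment is accurate: what is deferred there is not bookkeeping but the entire content of \eqref{eq:cobordism_blowup_identity} and \eqref{eq:stably-complex-nonstandard}.
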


Cohomology ring of $BF_{i}$ is easily computed using Leray-Hirsch Theorem:
\begin{pr}\normalfont{(See \cite[Theorem 7.8.2]{bu-pa-15}).}
One has an isomorphism of graded $\Z$-rings:
\[
H^{*}(BF_{i};\Z)\simeq\Z[t_{1},\dots,t_{i}]/(t_{a}^{2}-t_{a}t_{a-1}|\ a=1,\dots,i),
\]
where $t_{0}:=0$.
\end{pr}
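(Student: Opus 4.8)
The plan is to exploit the iterated $\C P^1$-bundle (Bott tower) structure of $BF_i$ that comes directly from Definition \ref{def:bott_glob} specialised to $X=pt$, and to run the Leray--Hirsch theorem together with the Borel--Hirzebruch relation one projectivisation at a time. Concretely, the projection $\pi_a\colon BF_a=\P(\beta_{a-1}\oplus\underline{\C})\to BF_{a-1}$ is a $\C P^1$-bundle, and over $BF_i$ we have the tautological line bundles $\beta_0=\zeta_1,\dots,\beta_i=\zeta_{i+1}$, with $\beta_0=\underline{\C}$ trivial. I would take as generators the classes $t_a:=c_1(\overline{\beta_a})\in H^2(BF_i;\Z)$, so that $t_0=0$ automatically because $\beta_0$ is trivial, and then prove that the natural ring map $\phi\colon\Z[t_1,\dots,t_i]/(t_a^2-t_at_{a-1})\to H^*(BF_i;\Z)$ sending $t_a\mapsto c_1(\overline{\beta_a})$ is an isomorphism.

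First I would establish the relations, which is where \eqref{eq:1.5} does the work: over $BF_a$ the line bundle $\beta_a$ is exactly the tautological subbundle of $\pi_a^*(\beta_{a-1}\oplus\underline{\C})$, since $\beta_a\oplus\beta_a^*\simeq\beta_{a-1}\oplus\underline{\C}$. Hence the Borel--Hirzebruch (Grothendieck) relation for this rank-$2$ bundle reads $x^2+c_1(\beta_{a-1})\,x+c_2(\beta_{a-1}\oplus\underline{\C})=0$ with $x=c_1(\beta_a^*)=t_a$. As one summand is trivial, $c_2(\beta_{a-1}\oplus\underline{\C})=0$, and $c_1(\beta_{a-1})=-t_{a-1}$, so the relation collapses to $t_a^2-t_at_{a-1}=0$. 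This shows $\phi$ is well defined; for $a=1$ it specialises to $t_1^2=0$, matching the bottom stage $BF_1=\C P^1$.

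Next I would prove $\phi$ is bijective by a rank count. Because $t_a$ restricts to a generator of $H^2$ of the $\C P^1$-fibre of $\pi_a$, Leray--Hirsch shows by induction on $a$ that $H^*(BF_i;\Z)$ is a free $\Z$-module on the squarefree monomials $t_1^{\varepsilon_1}\cdots t_i^{\varepsilon_i}$, $\varepsilon_a\in\{0,1\}$; in particular these classes generate the ring, so $\phi$ is surjective, and the Poincar\'e series of $BF_i$ is $(1+s)^i$ with $s$ in cohomological degree $2$. It then remains to check that the source ring is itself $\Z$-free of the same graded rank: the quadratic relations $t_a^2-t_at_{a-1}$ form a Gr\"obner basis (leading terms $t_a^2$) for a suitable monomial order, so the standard monomials are precisely the $2^i$ squarefree ones. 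Comparing graded ranks degree by degree upgrades the surjection $\phi$ to an isomorphism.

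The hard part is purely the bookkeeping rather than any topology: I must fix the conjugation/duality convention so that the Grothendieck relation comes out as $t_a^2-t_at_{a-1}$ (and not, say, $t_a^2+t_at_{a-1}$) with the normalisation $t_0=0$, and I must verify that the ideal of relations is generated by the quadratics alone, i.e.\ that the squarefree monomials form a $\Z$-basis on both sides. Once these normalisations are pinned down, everything reduces to the Leray--Hirsch rank count, with no residual geometric obstruction.
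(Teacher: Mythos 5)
Your proof is correct and follows exactly the route the paper indicates (``easily computed using Leray--Hirsch'') and that the cited \cite[Theorem 7.8.2]{bu-pa-15} carries out: the iterated $\C P^{1}$-bundle (Bott tower) structure, the Grothendieck relation at each stage to get $t_{a}^{2}=t_{a}t_{a-1}$, and a Leray--Hirsch basis/rank count to upgrade the well-defined surjection to an isomorphism. One notational caveat: in this paper $\beta_{a}^{*}$ denotes the orthogonal-complement line bundle of \eqref{eq:1.5}, whose first Chern class is $t_{a}-t_{a-1}$, so the class entering the Grothendieck relation should be written $x=c_{1}(\overline{\beta_{a}})=t_{a}$ rather than $c_{1}(\beta_{a}^{*})$ --- with that convention fixed (as you yourself anticipate), everything goes through.
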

The fundamental class of $BF_{i}$ is Poncar\'e dual to the element $t_{i}^{i}=t_{i}\cdots t_{1}\in H^{*}(BF_{i};\Z)$. One has the identity
\[
(1+t_{a})(1-t_{a}+t_{a-1})=1+t_{a-1}
\]
for $a=1,\dots,i$ in $H^{*}(BF_{i};\Z)$. Hence,
\begin{equation}\label{eq:id1}
(1+t_{i})\prod_{a=1}^{i}(1-t_{a}+t_{a-1})=1.
\end{equation}

Now we compute the Milnor numbers of the varieties $X_{i,j}$ introduced in Section \ref{ss:const}.
\begin{pr}\label{pr:snbott}
If $i+j$ is even, then $s_{i+j} (X_{i,j})=0.$ Otherwise,
\[
s_{i+j}(X_{i,j})=2\binom{i+j}{i}.
\]
\end{pr}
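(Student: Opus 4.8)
The plan is to recognise $X_{i,j}$ as a bounded flag fibre bundle over $BF_{i}$ and feed this directly into Proposition~\ref{pr:snbfb}. Reading off the tuple in the definition of $X_{i,j}$, the base is $X=BF_{i}$ of complex dimension $n=i$, and the fibre data are the $k+1=j+1$ line bundles $\xi_{j+1}=\overline{\beta}_{i}$, $\xi_{j-i+l}=\overline{\beta}^{*}_{l}$ for $l=1,\dots,i$, and $\xi_{1}=\dots=\xi_{j-i}=\underline{\C}$; thus $k=j$ and $n+k=i+j$. The parity dichotomy of Proposition~\ref{pr:snbfb} then gives $s_{i+j}(X_{i,j})=0$ when $i+j$ is even, so only the odd case requires work.

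In the odd case Proposition~\ref{pr:snbfb} yields
\[
s_{i+j}(X_{i,j})=2\la (1+c_{1}(\overline{\beta}_{i}))^{i+j-1}\prod_{m=1}^{j}(1+x_{m})^{-1},[BF_{i}]\ra,
\]
where the $j-i$ trivial summands contribute factors $(1+0)^{-1}=1$ and drop out, leaving $\prod_{m=1}^{j}(1+x_{m})^{-1}=\prod_{l=1}^{i}(1+c_{1}(\overline{\beta}^{*}_{l}))^{-1}$. The key step is that this product telescopes. Conjugating \eqref{eq:1.5} gives $\overline{\beta}_{l}\oplus\overline{\beta}^{*}_{l}\simeq\overline{\beta}_{l-1}\oplus\underline{\C}$, hence on total Chern classes $1+c_{1}(\overline{\beta}^{*}_{l})=(1+c_{1}(\overline{\beta}_{l-1}))(1+c_{1}(\overline{\beta}_{l}))^{-1}$; since $\overline{\beta}_{0}=\underline{\C}$ the product collapses to $\prod_{l=1}^{i}(1+c_{1}(\overline{\beta}^{*}_{l}))=(1+c_{1}(\overline{\beta}_{i}))^{-1}$ (this is exactly the identity \eqref{eq:id1} under the identification of $c_{1}(\overline{\beta}_{a})$ with $t_{a}$). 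Substituting the inverse product collapses the whole integrand to a single power:
\[
s_{i+j}(X_{i,j})=2\la (1+c_{1}(\overline{\beta}_{i}))^{i+j},[BF_{i}]\ra.
\]

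It then remains to evaluate the top class. Extracting the degree $2i$ part of $(1+c_{1}(\overline{\beta}_{i}))^{i+j}$, only the term $\binom{i+j}{i}c_{1}(\overline{\beta}_{i})^{i}$ survives the pairing with $[BF_{i}]$, so it suffices to show $\la c_{1}(\overline{\beta}_{i})^{i},[BF_{i}]\ra=1$. For this I use the degree $1$ morphism $f_{i}:BF_{i}\to\C P^{i}$ of \eqref{eq:fi}: by \eqref{eq:2.4} one has $\beta_{i}=f_{i}^{*}\eta_{i}$, so $c_{1}(\overline{\beta}_{i})=f_{i}^{*}(-c_{1}(\eta_{i}))=f_{i}^{*}h$, with $h$ the positive generator of $H^{2}(\C P^{i};\Z)$. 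The projection formula together with $f_{i*}[BF_{i}]=[\C P^{i}]$ gives $\la c_{1}(\overline{\beta}_{i})^{i},[BF_{i}]\ra=\la h^{i},[\C P^{i}]\ra=1$, whence $s_{i+j}(X_{i,j})=2\binom{i+j}{i}$.

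The only place where a wrong turn is easy is the bookkeeping of conjugations and signs: one must consistently carry $c_{1}(\overline{L})=-c_{1}(L)$ through the telescoping and fix the sign convention so that $c_{1}(\overline{\beta}_{i})=f_{i}^{*}h$ pairs to $+1$ (rather than $(-1)^{i}$) against the fundamental class. Once the telescoping is spotted the computation is short; its conceptual content is that the conjugate dual bundles $\overline{\beta}^{*}_{l}$ assemble into the single class $1+c_{1}(\overline{\beta}_{i})$, reducing an a priori intricate pairing over $BF_{i}$ to the elementary $\la h^{i},[\C P^{i}]\ra$.
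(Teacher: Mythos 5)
Your proof is correct and follows essentially the same route as the paper: both apply Proposition~\ref{pr:snbfb} to the tower $X_{i,j}\to BF_i$ and collapse the product of inverse Chern classes via the telescoping identity (your conjugated form of \eqref{eq:1.5} is exactly the identity \eqref{eq:id1}), reducing everything to $2\la(1+c_1(\overline{\beta}_i))^{i+j},[BF_i]\ra$. The only cosmetic difference is the final evaluation: the paper quotes that $[BF_i]$ is Poincar\'e dual to $t_i^i$, while you obtain $\la c_1(\overline{\beta}_i)^i,[BF_i]\ra=1$ from the projection formula along the degree-one map $f_i:BF_i\to\C P^i$ --- both standard and equally valid.
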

\begin{proof}
Use Proposition \ref{pr:snbfb}. Case of even $i+j$ is trivial. Suppose that $i+j$ is odd. Then
\[
s_{i+j}(X_{i,j})=2\la(1+t_{i})^{i+j}\bigl((1+t_{i})\prod_{a=1}^{i}(1-t_{a}+t_{a-1})\bigr)^{-1}, BF_{i}\ra\overset{\eqref{eq:id1}}{=}2\la(1+t_{i})^{i+j}, BF_{i}\ra=2\binom{i+j}{i}.
\]
Q.E.D.
\end{proof}

\begin{cor}
\begin{equation}\label{eq:1.1}
s_{n}(BF_{n})=1+(-1)^{n+1}.
\end{equation}
\end{cor}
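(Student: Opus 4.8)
The plan is to specialise the formula for the Milnor number of a bounded flag variety to $BF_{n}$ itself, and then to rewrite the two resulting values ($0$ or $2$, according to the parity of $n$) as the single expression $1+(-1)^{n+1}$. There is no genuine difficulty here: the entire statement is a direct substitution into an already-established Proposition, and the only content is the elementary parity bookkeeping at the end.

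First I would take the cleanest route and apply Proposition \ref{pr:snbfb} directly with base $X=pt$. In this case the base has real dimension $0$, the tuple consists of $k+1=n+1$ trivial linear bundles (so that $k=n$ and the total space is $BF(\underline{\C},\dots,\underline{\C})=BF_{n}$), and every first Chern class vanishes, $x_{i}=c_{1}(\underline{\C})=0$. Equivalently, one may simply observe that $BF_{n}=X_{0,n}$, as recorded when $X_{i,j}$ was introduced, and invoke Proposition \ref{pr:snbott} at $i=0$, $j=n$.

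Then, when $n$ is even the quantity $n+k=0+n$ is even, and Proposition \ref{pr:snbfb} gives $s_{n}(BF_{n})=0$; when $n$ is odd the pairing collapses, since all $x_{i}=0$, to $s_{n}(BF_{n})=2\la(1+0)^{n-1}(1+0)^{-1}\cdots(1+0)^{-1},[pt]\ra=2\la 1,[pt]\ra=2$ (in the $X_{0,n}$ formulation this reads $2\binom{n}{0}=2$). Finally I would match these against $1+(-1)^{n+1}$: for even $n$ one has $(-1)^{n+1}=-1$, whence $1+(-1)^{n+1}=0$, while for odd $n$ one has $(-1)^{n+1}=1$, whence $1+(-1)^{n+1}=2$. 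This reconciles both parities with the stated closed form. The only point deserving a word of care is confirming that $BF_{n}$ really is the $i=0$ instance of the bounded flag bundle construction, so that Proposition \ref{pr:snbfb} (or Proposition \ref{pr:snbott}) legitimately applies; once this is granted, the corollary is immediate.
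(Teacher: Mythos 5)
Your proposal is correct and follows essentially the same route as the paper, whose entire proof is to apply Proposition \ref{pr:snbott} with $i=0$ (equivalently, Proposition \ref{pr:snbfb} with $X=pt$, which is what Proposition \ref{pr:snbott} specialises anyway). The substitution $X_{0,n}=BF_{n}$, the resulting values $0$ and $2\binom{n}{0}=2$, and the parity identification with $1+(-1)^{n+1}$ all match the paper's argument.
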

\begin{proof}
Apply the previous Proposition for $i=0$.
\end{proof}

Let $Z_{i+j-2}=Z_{i,j}$ and denote the corresponding ``bases'' of complex dimension $k$ of the Bott tower $Z_{i,j}$ by $Z_{k}$ (see Subsection \ref{ss:const}). Notice that for $i>0$, $Z_{i-1}=BF_{i-1}$. For $i>0$ let $Y_{i,j}:=\P(\overline{\zeta_{j}}\overline{\beta_{i-1}}\oplus \overline{\beta_{i-1}}\oplus\overline{\C})\to Z_{i,j}$. 

\begin{pr}\label{pr:sndiff}
\[
s_{i+j}(Y_{i,j})=
\begin{cases}
\sum_{k=j}^{i+j}\binom{k}{j},\ \mbox{for } i\geq 2;\\
j+1+(-1)^{j+1},\ \mbox{for } i=1.
\end{cases}
\]
\end{pr}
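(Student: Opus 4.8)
The plan is to compute $s_{n}(Y_{i,j})$ ($n:=i+j$) directly from the non-standard stably complex structure \eqref{eq:stably-complex-nonstandard}. By Proposition \ref{prop:hit} together with Proposition \ref{pr:desc}, $Y_{i,j}=\P(\nu\oplus\overline{\C})$ is the projectivisation of $\nu\oplus\overline{\C}$, where $\nu=\overline{\zeta_{j}}\overline{\beta_{i-1}}\oplus\overline{\beta_{i-1}}$ is the normal bundle of $Z_{i,j}\subset X_{i,j}$, endowed with \eqref{eq:stably-complex-nonstandard}. Writing $p\colon Y_{i,j}\to Z_{i,j}$ for the projection, $h:=c_{1}(\gamma)$ for the tautological class, and $a_{1}:=c_{1}(\overline{\zeta_{j}})+c_{1}(\overline{\beta_{i-1}})$, $a_{2}:=c_{1}(\overline{\beta_{i-1}})$ for the Chern roots of $p^{*}\nu$, the formula \eqref{eq:stably-complex-nonstandard} shows that the Chern roots of $TY_{i,j}$ are $a_{1}+h$, $a_{2}+h$, $-h$ together with the pulled-back Chern roots of $TZ_{i,j}$.

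First I would discard the base contribution. Since $s_{n}$ is the pairing of the sum of $n$-th powers of the Chern roots with $[Y_{i,j}]$, and the roots coming from $p^{*}TZ_{i,j}$ contribute $p^{*}$ of a class in $H^{2n}(Z_{i,j})=0$ (because $\dim_{\R}Z_{i,j}=2n-4$), those terms drop out and
\[
s_{n}(Y_{i,j})=\la (a_{1}+h)^{n}+(a_{2}+h)^{n}+(-h)^{n},\,[Y_{i,j}]\ra .
\]
Next I would integrate along the $\C P^{2}$-fibre by the Gysin homomorphism, using $p_{*}(h^{m})=s_{m-2}(\nu)$ (Segre classes, $s(\nu)=c(\nu)^{-1}$) — the same Segre-class device as in the proof of Proposition \ref{pr:snbfb}. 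This rewrites the pairing as $\la P(v,t),[Z_{i,j}]\ra$ for an explicit polynomial $P$ of degree $n-2$ in $v:=c_{1}(\overline{\zeta_{j}})$ and $t:=c_{1}(\overline{\beta_{i-1}})$, obtained from the symmetric polynomial in $a_{1},a_{2}$ after substituting $a_{1}=v+t$, $a_{2}=t$.

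It then remains to evaluate $\la P(v,t),[Z_{i,j}]\ra$. The crucial point is that $Z_{i,j}$ is again a bounded flag bundle over $BF_{i-1}$ (Proposition \ref{pr:desc}), so I would compute this pairing by iterating the Segre-class reduction used for Proposition \ref{pr:snbfb}: express $v=c_{1}(\overline{\zeta_{j}})$ through the $\overline{\beta_{a}^{*}}$ via \eqref{eq:1.13} and \eqref{eq:14}, push the bounded-flag fibre classes down to $BF_{i-1}$, and finally pair against $[BF_{i-1}]$ using the multiplicative identity \eqref{eq:id1} and $\la t_{i-1}^{i-1},[BF_{i-1}]\ra=1$. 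Collecting the binomial coefficients produced when the $n$-th powers are expanded against these flag relations, the answer telescopes to $\sum_{k=j}^{i+j}\binom{k}{j}$.

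The case $i=1$ I would treat separately, where it is genuinely easier: then $BF_{i-1}=BF_{0}$ is a point, $\beta_{0}=\underline{\C}$ so that $a_{2}=0$, and $Z_{1,j}=BF_{j-1}$ with $v=t_{j-1}$. The fibre push-forward collapses to $p_{*}(h^{m})=(-1)^{m}v^{m-2}$, and using $\la v^{j-1},[BF_{j-1}]\ra=1$ the three terms integrate to $n+(-1)^{n}=j+1+(-1)^{j+1}$, the parity term $(-1)^{j+1}$ coming from the sign of the Segre class in the push-forward of $(a_{2}+h)^{n}=h^{n}$. The hard part will be the third step: carrying out the integration over the bounded flag base $Z_{i,j}$ through the full cohomology of $BF_{i-1}$ (via \eqref{eq:id1}, rather than a naive top-class argument, which already overcounts for $i\geq 3$), so that the partial binomial sums telescope to $\sum_{k=j}^{i+j}\binom{k}{j}$, while correctly tracking the signs from $(-h)^{n}$ that control the parity behaviour.
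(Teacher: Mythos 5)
Your proposal is correct and takes essentially the same route as the paper's own proof: the same three Chern roots $y_{j}+x_{i-1}+y$, $x_{i-1}+y$, $-y$ coming from Hitchin's non-standard structure \eqref{eq:stably-complex-nonstandard} (with the base contribution discarded for dimension reasons), the same Segre-class Gysin push-forward along the $\C P^{2}$-fibre followed by iterated push-forwards down the Bott tower to $BF_{i-1}$ and the identity \eqref{eq:id1}, and the same separate, easier treatment of $i=1$, which you carry out correctly. The only substantive difference is that where you merely assert the $i\geq 2$ answer ``telescopes,'' the paper evaluates the three terms individually, namely $\la(y+x_{i-1}+y_{j})^{i+j},Y_{i,j}\ra=\binom{i+j}{i}$, $\la(y+x_{i-1})^{i+j},Y_{i,j}\ra=0$ and $\la(-y)^{i+j},Y_{i,j}\ra=\sum_{a=0}^{i-1}\binom{j+a}{j}$, so that step of your plan remains to be executed.
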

\begin{proof}
Let $y_{k}:=c_{1}(\overline{\zeta_{k}})$ for $\zeta_{k}\to Z_{i,j}$, and let $y=c_{1}(\overline{\zeta})$ for the tautological line bundle $\zeta$ over $Y_{i,j}$ (remind that $Y_{i,j}$ is a projective bundle over $Z_{i,j}$). For the tangent bundle,
\[
T Y_{i,j}\oplus\underline{\C}\cong \overline{\zeta}\overline{\zeta_{j}}\overline{\beta_{i-1}}\oplus \overline{\zeta}\overline{\beta_{i-1}}\oplus\zeta.
\]
Then
\[
s_{i+j}(Y_{i,j})=\la(y+x_{i-1}+y_{j})^{i+j}+(y+x_{i-1})^{i+j}+(-y)^{i+j}, Y_{i,j}\ra.
\]
First, suppose that $i>1$. Below I conduct computations using Segre class.

\begin{multline}
\la (y+x_{i-1}+y_{j})^{i+j}, Y_{i,j}\ra=\la (1+x_{i-1}+y_{j})^{i+j-1}(1+x_{i-1})^{-1}, Z_{i+j-2}\ra=\\
=\la\sum_{k=0}^{i+j-1}\binom{i+j-1}{k}(1+x_{i-1})^{k-1}y_{j}^{i+j-1-k},Z_{i+j-2}\ra=
\la\sum_{k=1}^{i+j-1}\binom{i+j-1}{k}x_{i-1}^{k-1}y_{j}^{i+j-1-k},Z_{i+j-2}\ra=\\
\la\bigl(\sum_{k=1}^{i+j-1}\binom{i+j-1}{k}x_{i-1}^{k-1}\bigr)(1-x_{i-1}+x_{i-2})^{-1}\cdots(1-x_{1})^{-1},BF_{i-1}\ra \overset{\eqref{eq:id1}}{=}\\
=\la\bigl(\sum_{k=1}^{i+j-1}\binom{i+j-1}{k}x_{i-1}^{k-1}\bigr)(1+x_{i-1}),BF_{i-1}\ra=\binom{i+j-1}{i}+\binom{i+j-1}{i-1}=\binom{i+j}{i}.
\end{multline}

\begin{multline}
\la(y+x_{i-1})^{i+j}, Y_{i,j}\ra=\la(1+x_{i-1})^{i+j-2}(1+y_{j}(1+x_{i-1})^{-1})^{-1}, Z_{i+j-2}\ra=\\
=\la\sum_{k=0}^{i+j-2}(-1)^{k}(1+x_{i-1})^{i+j-2-k}y_{j}^{k}, Z_{i+j-2}\ra=
\la\sum_{k=0}^{i+j-2}(-1)^{k}x_{i-1}^{i+j-2-k}y_{j}^{k}, Z_{i+j-2}\ra=\\
=\la(1+x_{i-1})^{-1}(1+y_{j-1})^{-1}, Z_{i+j-3}\ra=
\la(1+x_{i-1})^{-1}(1-x_{i-1}+x_{i-2})^{-1}(1-y_{j-2})^{-1}, Z_{i+j-4}\ra=\dots=\\
=\la(1+x_{i-1})^{-1}(1-x_{i-1}+x_{i-2})^{-1}\cdots(1-x_{1})^{-1}, BF_{i-1}\ra\overset{\eqref{eq:id1}}{=}
\la(1+x_{i-1})^{-1}(1+x_{i-1}), BF_{i-1}\ra=\\
=(-1)^{j-1}+(-1)^{j}=0.
\end{multline}


\begin{multline}
(-1)^{i+j}\la y^{i+j}, Y_{i,j}\ra=(-1)^{i+j}\la (1+x_{i-1})^{-1}(1+x_{i-1}+y_{j})^{-1}, Z_{i+j-2}\ra=\\
=\la \sum_{a=0}^{i+j-2}x_{i-1}^{a}(x_{i-1}+y_{j})^{i+j-2-a}, Z_{i+j-2}\ra=
\la \sum_{a=0}^{i+j-2}\sum_{b=0}^{i+j-2-a}\binom{i+j-2-a}{b}x_{i-1}^{a+b}y_{j}^{i+j-2-a-b}, Z_{i+j-2}\ra=\\
=\dots=\la \bigl(\sum_{a=0}^{i+j-2}x_{i-1}^{a}(1+x_{i-1})^{i+j-2-a}\bigr)(1-x_{i-1}+x_{i-2})^{-1}\cdots(1-x_{1})^{-1}, BF_{i-1}\ra\overset{\eqref{eq:id1}}{=}\\
=\la \bigl(\sum_{a=0}^{i+j-2}x_{i-1}^{a}(1+x_{i-1})^{i+j-2-a}\bigr)(1+x_{i-1}), BF_{i-1}\ra=
\sum_{a=0}^{i-1}\binom{i+j-1-a}{i-1-a}=\sum_{a=0}^{i-1}\binom{j+a}{j}.
\end{multline}
Now the claim follows for $i>1$.

Suppose that $i=1$. By Proposition \ref{pr:desc}, $Z_{1,j}=BF_{j-1}$. So:
\begin{multline}
s_{1+j}(Y_{1,j})=\la(y+y_{j})^{1+j}+(1+(-1)^{j+1})y^{1+j},Y_{1,j}\ra=\\
=\la(1+y_{j})^{j}+(1+(-1)^{j+1})(1+y_{j})^{-1},BF_{j-1}\ra=
j+1+(-1)^{j+1}.
\end{multline}

Q.E.D.
\end{proof}

\begin{proof}[Proof of Proposition~\ref{pr:comp}]
Follows from the previous Propositions \ref{prop:hit}, \ref{pr:snbott}, \ref{pr:sndiff} and additivity of the Milnor number.
\end{proof}

\section{Number-theoretical computations}\label{sec:nt}
In this Section the proof of Proposition \ref{pr:pmain} is given. 

\begin{lm}
Let $0\leq r<p$ be an integer. Then:
\begin{equation}\label{eq:1.2}
\prod_{k=1}^{p-1}(pr+k)\equiv (p-1)! \pmod {p^{2}}.
\end{equation}
\end{lm}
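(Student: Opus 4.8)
The plan is to expand the product and discard every term that is divisible by $p^{2}$. Writing
\[
\prod_{k=1}^{p-1}(pr+k)=\sum_{S\subseteq\{1,\dots,p-1\}}(pr)^{|S|}\prod_{k\notin S}k,
\]
each summand with $|S|\geq 2$ carries a factor $p^{2}$ and hence vanishes modulo $p^{2}$. Only the terms with $|S|=0$ and $|S|=1$ survive, so
\[
\prod_{k=1}^{p-1}(pr+k)\equiv (p-1)!+pr\sum_{j=1}^{p-1}\frac{(p-1)!}{j}\pmod{p^{2}},
\]
where $\frac{(p-1)!}{j}=\prod_{k\neq j}k$ is the integer obtained by deleting the factor $j$. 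Thus the lemma reduces to the claim $r\sum_{j=1}^{p-1}\frac{(p-1)!}{j}\equiv 0\pmod{p}$, and since this must hold for every admissible $r$ (in particular $r=1$), it suffices to prove $\sum_{j=1}^{p-1}\frac{(p-1)!}{j}\equiv 0\pmod{p}$.

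Next I would recognise $\sum_{j=1}^{p-1}\frac{(p-1)!}{j}$ as the coefficient of $x$ in the polynomial $\prod_{k=1}^{p-1}(x+k)$, i.e.\ as the elementary symmetric function $e_{p-2}(1,\dots,p-1)$. Reducing coefficients modulo $p$, the substitution $k\mapsto -k$ (which permutes the nonzero residues) gives
\[
\prod_{k=1}^{p-1}(x+k)\equiv\prod_{a=1}^{p-1}(x-a)\equiv x^{p-1}-1\pmod{p},
\]
the last identity because the nonzero residues are exactly the roots of $x^{p-1}-1$ over the field $\Z/p$. For an odd prime $p$ one has $p-1\geq 2$, so the coefficient of $x$ in $x^{p-1}-1$ is $0$; therefore $\sum_{j=1}^{p-1}\frac{(p-1)!}{j}\equiv 0\pmod{p}$, and the claim follows. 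An equivalent route avoids polynomials: from $\frac{(p-1)!}{j}\equiv (p-1)!\,j^{-1}\pmod{p}$ the sum is $\equiv (p-1)!\sum_{j=1}^{p-1}j^{-1}$, and as $j\mapsto j^{-1}$ permutes the nonzero residues, $\sum_{j=1}^{p-1}j^{-1}\equiv\sum_{j=1}^{p-1}j=\tfrac{p(p-1)}{2}\equiv 0\pmod{p}$ for odd $p$.

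The main — and essentially only — delicate point is the parity of $p$. For $p=2$ the coefficient of $x$ in $x^{p-1}-1=x-1$ equals $1$, and the congruence genuinely fails (taking $r=1$ gives $3\not\equiv 1\pmod{4}$). Consequently I would state the hypothesis that $p$ is an odd prime explicitly, the case $p=2$ being handled separately wherever the lemma is invoked. Apart from this restriction, the argument is a short truncation modulo $p^{2}$ followed by the standard factorisation of $x^{p-1}-1$ over $\Z/p$, so no serious computational obstacle arises.
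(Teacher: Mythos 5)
Your proof is correct and takes a genuinely different route from the paper's. The paper disposes of the lemma with a single pairing trick: for odd $p$ the factors pair off as $k \leftrightarrow p-k$, and $(pr+k)(pr+p-k) = k(p-k)+p^{2}r+p^{2}r^{2} \equiv k(p-k) \pmod{p^{2}}$, so the product collapses pairwise to $\prod_{k=1}^{(p-1)/2}k(p-k)=(p-1)!$. You instead expand the product, keep only the terms of degree at most $1$ in $pr$, and reduce the statement to the classical congruence $\sum_{j=1}^{p-1}(p-1)!/j \equiv 0 \pmod p$, which you prove by identifying the sum with $e_{p-2}(1,\dots,p-1)$ and factoring $\prod_{a=1}^{p-1}(x-a)\equiv x^{p-1}-1$ over $\Z/p$ (or, equivalently, by the inverse-permutation argument). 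Both arguments secretly require $p$ odd: yours because the coefficient of $x$ in $x^{p-1}-1$ vanishes only when $p-1\geq 2$, the paper's because the pairing $k\leftrightarrow p-k$ is fixed-point free and exhausts $\{1,\dots,p-1\}$ only when $p-1$ is even. Your explicit remark that the statement fails for $p=2$, $r=1$ (indeed $3\not\equiv 1\pmod 4$) is a real catch: the paper asserts the lemma for an arbitrary prime with no oddness hypothesis, though the omission is harmless in context, since the chain of lemmas feeding Proposition \ref{pr:pmain} is only invoked for $p>2$ (the case $p=2$ being handled separately via $BF_{n}$). In exchange for being longer, your route isolates exactly which classical fact (vanishing of the harmonic-type sum mod $p$) makes the lemma true and makes the role of the hypothesis $p>2$ visible; the paper's pairing is shorter but leaves both points implicit.
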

\begin{proof}
Follows from the computation:
\begin{equation}\label{eq:1.7}
(pr+k)(pr+p-k)\equiv k(p-k)+p(kr+(p-k)r)\equiv k(p-k)\pmod {p^{2}},
\end{equation}
where $0\leq r<p$.
\end{proof}


%

\begin{lm}\label{lm:tech}
For any $0<a<p$ one has:
\[
\frac{\prod_{k=1}^{a}(p(p-1)+k)}{a!}\equiv 1-p\sum_{k=1}^{a}\frac{1}{k} \pmod {p^{2}}.
\]
\end{lm}
\begin{proof}
First, observe that
\begin{equation}\label{eq:1.6}
\prod_{k=1}^{a}(k-p)\equiv a!\biggl(1-p\sum_{k=1}^{a}\frac{1}{k}\biggr)\pmod {p^{2}}.
\end{equation}
So
\[
\frac{\prod_{k=1}^{a}(p(p-1)+k)}{a!}\equiv \frac{\prod_{k=1}^{a}(k-p)}{a!}\equiv
1-p\sum_{k=1}^{a}\frac{1}{k} \pmod {p^{2}}.
\]
\end{proof}

\begin{lm}\label{lm:tech4}
For any $0<a<p$ one has:
\begin{equation}\label{eq:1.9}
\sum_{a=1}^{p-1}\frac{\prod_{k=1}^{a-1}(p(p-1)+k)}{a!}\equiv 0\pmod {p^{2}}.
\end{equation}
\end{lm}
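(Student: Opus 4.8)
The plan is to recognise each summand as a telescoping difference and thereby reduce the statement to a single Wolstenholme-type binomial congruence. Write $Q:=p(p-1)$ and, for $0\le a\le p-1$, set
\[
V_a:=\binom{Q+a}{a}=\frac{\prod_{k=1}^{a}(Q+k)}{a!},\qquad V_0=1.
\]
First I would note the elementary recursion $aV_a=(Q+a)V_{a-1}$, which rearranges to $a(V_a-V_{a-1})=QV_{a-1}$. Since the $a$-th summand in \eqref{eq:1.9} equals $V_{a-1}/a$, this yields the exact identity $Q\cdot\frac{\prod_{k=1}^{a-1}(Q+k)}{a!}=V_a-V_{a-1}$. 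Summing over $a=1,\dots,p-1$ telescopes to
\[
\sum_{a=1}^{p-1}\frac{\prod_{k=1}^{a-1}(p(p-1)+k)}{a!}=\frac{V_{p-1}-V_0}{Q}=\frac{\binom{p^2-1}{p-1}-1}{p(p-1)},
\]
using $Q+(p-1)=p^2-1$. Every summand is a $p$-adic integer, as its denominator $a!$ is prime to $p$ for $a<p$; and since $p-1$ is a unit modulo $p$, the asserted congruence modulo $p^2$ is \emph{equivalent} to $\binom{p^2-1}{p-1}\equiv 1\pmod{p^3}$.

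It then remains to prove this congruence, which I expect to be the crux. Writing $\binom{p^2-1}{p-1}=\prod_{k=1}^{p-1}\frac{p^2-k}{k}=(-1)^{p-1}\prod_{k=1}^{p-1}\bigl(1-\tfrac{p^2}{k}\bigr)$ and expanding the product, every term carrying a factor $p^4$ vanishes modulo $p^3$, so for odd $p$ one obtains
\[
\binom{p^2-1}{p-1}\equiv 1-p^2\sum_{k=1}^{p-1}\frac1k\pmod{p^3}.
\]
The only further input is $\sum_{k=1}^{p-1}\frac1k\equiv 0\pmod p$ for odd $p$: the inversion $k\mapsto k^{-1}$ permutes $\{1,\dots,p-1\}$ modulo $p$, whence $\sum_k k^{-1}\equiv\sum_k k=\tfrac{p(p-1)}2\equiv 0$. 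Therefore $p^2\sum\frac1k\equiv 0\pmod{p^3}$ and $\binom{p^2-1}{p-1}\equiv1\pmod{p^3}$, completing the argument.

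The main obstacle is thus locating the correct closed form $\binom{p^2-1}{p-1}$ for the telescoped sum; once the telescoping is spotted, the proof reduces to the elementary fact that $H_{p-1}\equiv0\pmod p$, and the full force of Wolstenholme's theorem is not needed. As an alternative one could bypass the telescoping and instead invoke Lemma~\ref{lm:tech} (applied with $a-1$ in place of $a$) to rewrite the $a$-th summand as $\tfrac1a\bigl(1-p\sum_{k=1}^{a-1}\tfrac1k\bigr)$ modulo $p^2$; summing produces $H_{p-1}-p\sum_{1\le k<a\le p-1}\tfrac1{ka}$, but because $H_{p-1}$ need not vanish modulo $p^2$ for small primes (already $p=3$), the two pieces must be handled together rather than killed separately, which is why the telescoping route is cleaner. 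One caveat to flag: the congruence genuinely fails for $p=2$, where the sum equals $1$, so the statement is to be read for odd $p$, consistently with the separate treatment of $p=2$ in the applications.
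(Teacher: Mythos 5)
Your proof is correct, and it takes a genuinely different route from the paper's. The paper proves \eqref{eq:1.9} by a symmetry argument: using Lemma \ref{lm:tech} it rewrites the $a$-th summand as $\frac{1}{a-p}\bigl(1-p\sum_{k=1}^{a}\tfrac{1}{k}\bigr)$ modulo $p^{2}$, and then the identity $p\bigl(\tfrac{1}{a}+\tfrac{1}{p-a}\bigr)=\tfrac{p^{2}}{a(p-a)}\equiv 0\pmod{p^{2}}$ shows that the summands for $a$ and $p-a$ cancel in pairs --- a pairing that is fixed-point free exactly because $p$ is odd. You instead telescope the sum exactly, via $a(V_{a}-V_{a-1})=p(p-1)V_{a-1}$ for $V_{a}=\binom{p(p-1)+a}{a}$, obtaining the closed form $\bigl(\binom{p^{2}-1}{p-1}-1\bigr)/\bigl(p(p-1)\bigr)$, and you reduce the lemma to the Wolstenholme-type congruence $\binom{p^{2}-1}{p-1}\equiv 1\pmod{p^{3}}$, which you then derive from the elementary fact $\sum_{k=1}^{p-1}\tfrac{1}{k}\equiv 0\pmod p$. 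Each step checks out: the summands are $p$-integral, $p-1$ is a $p$-adic unit (so the reduction is an equivalence, not just an implication), and the expansion of $\prod_{k=1}^{p-1}(1-p^{2}/k)$ modulo $p^{3}$ needs only the harmonic sum modulo $p$, not full Wolstenholme --- which is why the argument survives $p=3$, where $H_{p-1}\not\equiv 0\pmod{p^{2}}$. The two proofs rest on the same underlying symmetry (your harmonic-sum fact is proved by the same $a\leftrightarrow p-a$ involution that drives the paper's cancellation), but yours buys an exact identity valid for every prime, decouples the statement from Lemma \ref{lm:tech}, and makes transparent why $p=2$ fails, since $\binom{3}{1}=3\not\equiv 1\pmod{8}$; the paper's version stays entirely within mod-$p^{2}$ arithmetic and reuses machinery it has already set up. Your closing caveat is accurate and worth keeping: the lemma as printed is false for $p=2$ (the sum equals $1$), the paper's pairing argument also tacitly assumes $p$ odd, and this is harmless because Proposition \ref{pr:pmain} is only invoked for $p>2$ in the application.
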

\begin{proof}
To prove the claim of the Lemma it is enough to show that the summands in \eqref{eq:1.9} for $a$ and $p-a$ are additively inverse to each other. First observe that
\begin{equation}\label{eq:1.10}
p\biggl(\frac{1}{a}+\frac{1}{p-a}\biggr)\equiv 0\pmod {p^{2}}.
\end{equation}
By Lemma \ref{lm:tech} one has:
\[
\frac{\prod_{k=1}^{a-1}(p(p-1)+k)}{a!}\equiv\frac{1}{a+p(p-1)}\biggl(1-p\sum_{k=1}^{a}\frac{1}{k}\biggr)\equiv
\frac{1}{a-p}\biggl(1-p\sum_{k=1}^{a}\frac{1}{k}\biggr)\pmod {p^{2}},
\]
\begin{multline}
\frac{\prod_{k=1}^{p-a-1}(p(p-1)+k)}{(p-a)!}\equiv\frac{1}{p-a}\biggl(1-p\sum_{k=1}^{p-a-1}\frac{1}{k}\biggr)\overset{\eqref{eq:1.10}}{\equiv}
\frac{1}{p-a}\biggl(1+p\sum_{k=p-a}^{p-1}\frac{1}{k}\biggr)\overset{\eqref{eq:1.10}}{\equiv}\\
\equiv\frac{1}{p-a}\biggl(1-p\sum_{k=1}^{a}\frac{1}{k}\biggr) \pmod {p^{2}}.
\end{multline}
Q.E.D.
\end{proof}

Another facts from Number Theory are required. Denote by $k!_{p}$ the product of all consequent integers from $1$ to $k$ not divisible by $p$.
\begin{thm}\normalfont{(See \cite[Theorem 1]{gra-97}).}\label{thm:nt}
Suppose that prime power $p^q$ and positive numbers $m=n+r$ are given. Write $n=n_{0}+n_{1}p+\dots+n_{d}p^{d}$ in base $p$, and let $N_{i}$ be the least positive residue of $[n/p^{j}]\ \pmod  p^{q}$ for each $j\geq 0$ (so that $N_{j}=n_{j}+n_{j+1}p+\dots+n_{j+q-1}p^{q-1}$): also make the corresponding definitions for $m_{j},M_{j},r_{j},R_{j}$. Let $e_{j}$ be the number of indices $i\geq j$ for which $n_{i}<m_{i}$ (that is, the number of ``carries'', when adding $m$ and $r$ in base $p$, on or beyond the $j$-th digit). Then
\[
\frac{1}{p^{e_{0}}}\binom{n}{m}\equiv (\pm 1)^{e_{q-1}}\biggl(\frac{(N_{0})!_{p}}{(M_{0})!_{p}(R_{0})!_{p}}\biggr)
\biggl(\frac{(N_{1})!_{p}}{(M_{1})!_{p}(R_{1})!_{p}}\biggr)\cdots\biggl(\frac{(N_{d})!_{p}}{(M_{d})!_{p}(R_{d})!_{p}}\biggr)
\pmod p^{q},
\]
where $(\pm 1)$ is $(-1)$ except $p=2$ and $q\geq 3$.
\end{thm}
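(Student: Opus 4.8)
The statement is Granville's extension of Lucas' and Wilson's theorems to prime power moduli. The plan is to reduce $\binom{n}{m}$ to a product of \emph{$p$-free factorials} $(\cdot)!_p$, then evaluate each such factorial modulo $p^{q}$ by cutting it into complete periods of length $p^{q}$. Throughout I write $\nu_{p}$ for the $p$-adic valuation and set $r=n-m$ (so that $n=m+r$ and $\binom{n}{m}=n!/(m!\,r!)$; the ``$m=n+r$'' of the statement is read this way, matching the definitions of $R_{j}$ from $r$). First I would establish the peeling identity $n!=(n!)_{p}\cdot p^{\lfloor n/p\rfloor}\,\lfloor n/p\rfloor!$ and iterate it, together with Legendre's formula, to get
\[
n!=p^{\nu_{p}(n!)}\prod_{i\ge0}\bigl(\lfloor n/p^{i}\rfloor!\bigr)_{p},\qquad \nu_{p}(n!)=\sum_{i\ge1}\lfloor n/p^{i}\rfloor.
\]
Dividing the three such factorizations for $n,m,r$ yields
\[
\frac{\binom{n}{m}}{\,p^{\nu_{p}(n!)-\nu_{p}(m!)-\nu_{p}(r!)}\,}=\prod_{i\ge0}\frac{\bigl(\lfloor n/p^{i}\rfloor!\bigr)_{p}}{\bigl(\lfloor m/p^{i}\rfloor!\bigr)_{p}\bigl(\lfloor r/p^{i}\rfloor!\bigr)_{p}}.
\]
By Kummer's theorem the exponent on the left is the number $e_{0}$ of carries in the base-$p$ addition $m+r=n$, so the left side is exactly the unit $\binom{n}{m}/p^{e_{0}}$ of the statement, and the product on the right is finite, as every factor with $p^{i}>n$ is $1$.

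The heart of the argument is the reduction of each factor modulo $p^{q}$. Writing $M=\lfloor n/p^{i}\rfloor=\alpha p^{q}+\beta$ with $0\le\beta<p^{q}$, I would split $(M!)_{p}$ into the $\alpha$ complete blocks of $p$-coprime integers occupying successive intervals of length $p^{q}$, plus a tail. Each complete block reduces modulo $p^{q}$ to the product of \emph{all} units of $\Z/p^{q}\Z$, so the decisive input is the generalised Wilson congruence
\[
\prod_{\substack{1\le k\le p^{q}\\ p\nmid k}}k\equiv\varepsilon\pmod{p^{q}},\qquad \varepsilon=\begin{cases}+1,& p=2,\ q\ge3,\\ -1,& \text{otherwise.}\end{cases}
\]
This is where the sign $(\pm1)$ of the theorem originates: the product of all elements of a finite abelian group equals the product of its elements of order dividing $2$, which is the unique involution $-1$ when $(\Z/p^{q}\Z)^{\times}$ is cyclic (all cases except $p=2,\,q\ge3$) and is trivial when the $2$-torsion has rank $\ge2$, namely when $p=2,\,q\ge3$ and $(\Z/2^{q}\Z)^{\times}\cong\Z/2\times\Z/2^{q-2}$. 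Since $\beta=N_{i}$ and $\alpha=\lfloor n/p^{i+q}\rfloor$, this gives $\bigl(\lfloor n/p^{i}\rfloor!\bigr)_{p}\equiv\varepsilon^{\lfloor n/p^{i+q}\rfloor}(N_{i}!)_{p}\pmod{p^{q}}$, and likewise for $m$ and $r$.

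Substituting these congruences into the product and collecting the sign contributions leaves the exponent
\[
\sum_{i\ge0}\Bigl(\lfloor n/p^{i+q}\rfloor-\lfloor m/p^{i+q}\rfloor-\lfloor r/p^{i+q}\rfloor\Bigr)=\sum_{j\ge q-1}c_{j},
\]
where $c_{j}\in\{0,1\}$ is the carry out of the $j$-th digit in the addition $m+r=n$; here I invoke the elementary identity $\lfloor n/p^{j}\rfloor-\lfloor m/p^{j}\rfloor-\lfloor r/p^{j}\rfloor=c_{j-1}$. By definition this sum is exactly $e_{q-1}$, the number of carries on or beyond the $(q-1)$-st digit, so the accumulated sign is $\varepsilon^{e_{q-1}}$. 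The surviving non-sign factors are precisely $(N_{i}!)_{p}/\bigl((M_{i}!)_{p}(R_{i}!)_{p}\bigr)$ for $i=0,\dots,d$, which produces the asserted congruence.

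The main obstacle is twofold, and both parts concern signs rather than the skeleton of the computation. The delicate arithmetic point is the generalised Wilson congruence together with an honest treatment of the exceptional case $p=2,\,q\ge3$, where the enlarged $2$-torsion flips $\varepsilon$ from $-1$ to $+1$; this needs the structure theorem for $(\Z/2^{q}\Z)^{\times}$ and the fact that the product of all elements of $(\Z/2)^{k}$ vanishes for $k\ge2$. The bookkeeping obstacle is matching the accumulated power of $\varepsilon$ to the combinatorially defined $e_{q-1}$, which hinges on the carry identity $\lfloor n/p^{j}\rfloor-\lfloor m/p^{j}\rfloor-\lfloor r/p^{j}\rfloor=c_{j-1}$ and on checking that the index shift by $q$ in the valuations corresponds exactly to discarding the lowest $q-1$ carries. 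Once these two sign computations are pinned down, the remaining steps are the routine block decomposition and the Legendre–Kummer valuation counts sketched above.
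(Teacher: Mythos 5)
The paper offers no proof of this statement: it is quoted (with the harmless misprint $m=n+r$ for $n=m+r$, which you correctly re-read as $r=n-m$) directly from Granville's paper, so the only meaningful comparison is with the cited source, and your argument is in fact a correct reconstruction of Granville's own proof. All the key steps check out: the iterated peeling $n!=p^{\nu_{p}(n!)}\prod_{i\geq 0}\bigl(\lfloor n/p^{i}\rfloor !\bigr)_{p}$ combined with Kummer's theorem to identify the extracted power of $p$ with $e_{0}$; the decomposition of each $p$-free factorial into $\lfloor n/p^{i+q}\rfloor$ complete blocks of length $p^{q}$ plus a tail $(N_{i}!)_{p}$, with each block evaluated by the generalised Wilson congruence whose sign $\varepsilon$ is $+1$ precisely when the $2$-torsion of $(\Z/2^{q}\Z)^{\times}$ has rank two (i.e.\ $p=2$, $q\geq 3$); and the carry identity $\lfloor n/p^{j}\rfloor-\lfloor m/p^{j}\rfloor-\lfloor r/p^{j}\rfloor=c_{j-1}$, which correctly converts the accumulated sign exponent $\sum_{j\geq q}c_{j-1}$ into $e_{q-1}$ (the only residual quibble is the convention ``least positive residue'' versus your $0\leq\beta<p^{q}$, which shifts $\alpha$ by one in the degenerate case $\lfloor n/p^{j}\rfloor\equiv 0 \pmod{p^{q}}$ but costs exactly one compensating factor of $\varepsilon$, so the congruence is unaffected).
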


\begin{lm}\normalfont{(\cite[\S 2, Lemma 1]{gra-97}).}\label{lm:ressq}
\begin{equation}\label{eq:1.3}
(p^{2})!_{p}\equiv -1\pmod {p^{2}}.
\end{equation}
\end{lm}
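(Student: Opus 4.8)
The plan is to recognise $(p^{2})!_{p}$ as the product of all units of the ring $\Z/p^{2}\Z$ and then to evaluate that product by a Wilson-type pairing. Indeed, by the definition of $k!_{p}$, the quantity $(p^{2})!_{p}$ is the product of those integers $a$ with $1\le a\le p^{2}$ and $\gcd(a,p)=1$; since $p$ is prime, ``not divisible by $p$'' is the same as ``coprime to $p$'', and these residues form a complete set of representatives for the group $(\Z/p^{2}\Z)^{\times}$. Thus, modulo $p^{2}$, the claim is equivalent to computing the product of all elements of this finite abelian group.

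First I would partition the factors into pairs $\lbrace a,a^{-1}\rbrace$, where $a^{-1}$ is the multiplicative inverse of $a$ modulo $p^{2}$. Each genuine pair contributes $a\cdot a^{-1}\equiv 1\pmod{p^{2}}$, so the entire product collapses to the contribution of the self-inverse elements, namely those $a$ satisfying $a^{2}\equiv 1\pmod{p^{2}}$, equivalently $p^{2}\mid (a-1)(a+1)$. For an odd prime $p$ the two factors $a-1$ and $a+1$ differ by $2$, so $p$ divides at most one of them; hence $p^{2}$ divides a single factor and $a\equiv\pm 1\pmod{p^{2}}$. For $p=2$ a direct inspection of the units $1,3$ modulo $4$ gives the same two solutions $a\equiv\pm 1$. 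In every case the self-inverse units are exactly $1$ and $-1$, and since $p^{2}\ge 3$ these are distinct; their product is $-1$, which yields $(p^{2})!_{p}\equiv -1\pmod{p^{2}}$.

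The step that needs genuine care, and the only real obstacle, is the count of solutions of $a^{2}\equiv 1\pmod{p^{2}}$: one must verify that no spurious square roots of unity slip through the pairing, leaving exactly $\pm 1$. This is precisely the place where the prime-power hypothesis is essential, for modulo an integer divisible by several distinct odd primes there would be further square roots of $1$ and the product would collapse to $+1$ instead. Everything else is the standard abelian-group pairing and is routine once this count is pinned down.
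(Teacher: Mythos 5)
Your proof is correct. The paper itself gives no argument for this lemma --- it is quoted directly from Granville \cite[\S 2, Lemma 1]{gra-97} --- and your Wilson-type pairing of each unit of $\Z/p^{2}\Z$ with its inverse, reducing the product to the self-inverse elements $a\equiv\pm 1\pmod{p^{2}}$, is precisely the standard proof of that cited result, so there is nothing to fault. (One remark: Granville's lemma is stated for general $(p^{q})!_{p}$, where the answer is $+1$ when $p=2$ and $q\geq 3$ because of the extra square roots of unity $2^{q-1}\pm 1$; your counting step correctly shows why no such exceptions occur at $q=2$.)
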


\begin{lm}\label{lm:3.2}
For the modulus $p^{2}$ residue of the binomial number one has:
\[
\binom{p^{s}-1}{p^{s}-p^{s-1}-1}\equiv p-1 \pmod {p^{2}}.
\]
\end{lm}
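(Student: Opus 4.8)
The plan is to first simplify the binomial coefficient by symmetry and then apply Granville's congruence (Theorem~\ref{thm:nt}) with the prime power $p^{2}$ (i.e.\ $q=2$). Since $(p^{s}-1)-(p^{s}-p^{s-1}-1)=p^{s-1}$, one has
\[
\binom{p^{s}-1}{p^{s}-p^{s-1}-1}=\binom{p^{s}-1}{p^{s-1}},
\]
so I would set $n=p^{s}-1$, $m=p^{s-1}$ and $r=n-m=p^{s}-p^{s-1}-1$, and read off the base-$p$ digits: $n$ has every digit equal to $p-1$ in positions $0,\dots,s-1$; $m$ has the single nonzero digit $1$ in position $s-1$; and $r$ has digit $p-1$ in positions $0,\dots,s-2$ and digit $p-2$ in position $s-1$.

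First I would check that no carries occur when adding $m$ and $r$ in base $p$: digit by digit $n_{i}\geq m_{i}$ for all $i$ (the only comparison that is not trivial, $n_{s-1}=p-1\geq 1=m_{s-1}$, holds for every $p$). Hence in Granville's notation $e_{0}=e_{1}=0$, which removes the power of $p$ in front of the binomial coefficient and fixes the sign $(-1)^{e_{1}}=1$. The theorem then collapses to the clean product $\binom{n}{m}\equiv\prod_{j=0}^{s-1}\frac{(N_{j})!_{p}}{(M_{j})!_{p}(R_{j})!_{p}}\pmod{p^{2}}$ over the two-digit windows $N_{j}=n_{j}+n_{j+1}p$, and likewise for $M_{j}$ and $R_{j}$.

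Next I would compute the windows. For $j=0,\dots,s-3$ one gets $M_{j}=0$ and $N_{j}=R_{j}=p^{2}-1$, so each of these factors is $1$. The window at $j=s-1$ gives $N_{s-1}=p-1$, $M_{s-1}=1$, $R_{s-1}=p-2$, whose factor is $\frac{(p-1)!}{(p-2)!}=p-1$. The only remaining window is $j=s-2$, contributing the factor $A:=\frac{(p^{2}-1)!_{p}}{(p)!_{p}\,(p^{2}-p-1)!_{p}}$, so the whole product is $A\cdot(p-1)$.

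The crux is therefore to show $A\equiv 1\pmod{p^{2}}$. I would split $(p^{2}-1)!_{p}=(p^{2}-p-1)!_{p}\cdot\prod_{l=1}^{p-1}\bigl(p(p-1)+l\bigr)$, since in the top block $p^{2}-p\leq k\leq p^{2}-1$ only $p(p-1)$ is divisible by $p$, leaving exactly the factors $p(p-1)+l$ for $l=1,\dots,p-1$. Applying \eqref{eq:1.2} with $r=p-1$ gives $\prod_{l=1}^{p-1}(p(p-1)+l)\equiv(p-1)!\pmod{p^{2}}$, and since $(p)!_{p}=(p-1)!$, cancellation yields $A\equiv 1$. (Equivalently, one may use Lemma~\ref{lm:ressq} to evaluate $(p^{2}-1)!_{p}\equiv-1$ and obtain the same conclusion.) Substituting back gives $\binom{p^{s}-1}{p^{s}-p^{s-1}-1}\equiv p-1\pmod{p^{2}}$. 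The main obstacle is the careful base-$p$ digit and two-digit-window bookkeeping feeding Granville's theorem, together with recognizing that the single surviving factorial-ratio window collapses to $1$ modulo $p^{2}$.
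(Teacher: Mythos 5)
Your proof is correct and follows essentially the same route as the paper: both invoke Granville's theorem (Theorem~\ref{thm:nt}) with $q=2$ for the digit decomposition $p^{s}-1 = (p^{s}-p^{s-1}-1)+p^{s-1}$, observe there are no carries, reduce to the two nontrivial windows contributing $(p-1)$ and $\frac{(p^{2}-1)!_{p}}{p!_{p}\,(p^{2}-p-1)!_{p}}$, and kill the latter via the congruence \eqref{eq:1.2} with $r=p-1$. Your swap of $m$ and $r$ by binomial symmetry is purely cosmetic (Granville's formula is symmetric in $M_{j}$ and $R_{j}$), and your bookkeeping is in fact slightly cleaner than the paper's, which contains a typo ($\prod_{k=0}^{p-1}$ should be $\prod_{k=1}^{p-1}$) at the corresponding step.
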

\begin{proof}
Clearly,
\[
\overline{p-1, p-1,\dots, p-1}_{p}=\overline{p-2, p-1,\dots, p-1}_{p}+\overline{1, 0\dots, 0}_{p}.
\]
By Theorem \ref{thm:nt} then one has:
\begin{multline}
\binom{p^{s}-1}{p^{s}-p^{s-1}-1}=\binom{\overline{p-1, p-1,\dots, p-1}_{p}}{\overline{p-2, p-1,\dots, p-1}_{p}}\equiv
(p-1)\frac{(p-1+p(p-1))!_{p}}{(p-1+p(p-2))!_{p}p!_{p}}\equiv \\
\equiv(p-1)\frac{\prod_{k=0}^{p-1} (p(p-1)+k)}{p!_{p}}\equiv p-1
\pmod {p^{2}}.
\end{multline}

\end{proof}

\begin{lm}\label{lm:3.1}
\[
\sum_{k=p^{s}-p^{s-1}}^{p^{s}-2}\binom{k}{p^{s}-p^{s-1}-1}\equiv 0\pmod {p^{2}}.
\]
\end{lm}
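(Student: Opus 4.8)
The plan is to collapse the sum via the hockey-stick identity and then reduce the whole statement to the single binomial coefficient already controlled by Lemma~\ref{lm:3.2}. Writing $m:=p^{s}-p^{s-1}-1$, the sum runs over $k$ from $m+1$ to $p^{s}-2$. Since $\sum_{k=m}^{N}\binom{k}{m}=\binom{N+1}{m+1}$, taking $N=p^{s}-2$ and removing the $k=m$ term $\binom{m}{m}=1$ gives
\[
\sum_{k=p^{s}-p^{s-1}}^{p^{s}-2}\binom{k}{m}=\binom{p^{s}-1}{p^{s}-p^{s-1}}-1=\binom{p^{s}-1}{p^{s-1}-1}-1,
\]
where the last equality is the symmetry $\binom{n}{k}=\binom{n}{n-k}$. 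So the assertion is equivalent to the single congruence $\binom{p^{s}-1}{p^{s-1}-1}\equiv 1\pmod{p^{2}}$.

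Next I would relate this binomial coefficient to the one from Lemma~\ref{lm:3.2}. By the same symmetry, $\binom{p^{s}-1}{p^{s}-p^{s-1}-1}=\binom{p^{s}-1}{p^{s-1}}$, so Lemma~\ref{lm:3.2} reads $\binom{p^{s}-1}{p^{s-1}}\equiv p-1\pmod{p^{2}}$. The two coefficients $\binom{p^{s}-1}{p^{s-1}-1}$ and $\binom{p^{s}-1}{p^{s-1}}$ are tied together by Pascal's rule $\binom{p^{s}-1}{p^{s-1}-1}+\binom{p^{s}-1}{p^{s-1}}=\binom{p^{s}}{p^{s-1}}$ together with the elementary identity $\binom{p^{s}}{p^{s-1}}=\tfrac{p^{s}}{p^{s-1}}\binom{p^{s}-1}{p^{s-1}-1}=p\binom{p^{s}-1}{p^{s-1}-1}$. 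Substituting the latter into the former yields
\[
(p-1)\binom{p^{s}-1}{p^{s-1}-1}=\binom{p^{s}-1}{p^{s-1}}.
\]

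Finally, combining this with $\binom{p^{s}-1}{p^{s-1}}\equiv p-1\pmod{p^{2}}$ gives $(p-1)\binom{p^{s}-1}{p^{s-1}-1}\equiv p-1\pmod{p^{2}}$; since $\gcd(p-1,p^{2})=1$, the factor $p-1$ is invertible modulo $p^{2}$ and may be cancelled, leaving $\binom{p^{s}-1}{p^{s-1}-1}\equiv 1\pmod{p^{2}}$, which is exactly what the reduction above requires. I expect the main obstacle to be bookkeeping rather than anything conceptual: one must keep the two nearly identical coefficients $\binom{p^{s}-1}{p^{s-1}-1}$ and $\binom{p^{s}-1}{p^{s-1}}$ straight across the symmetry and Pascal manipulations, and be careful to invoke Lemma~\ref{lm:3.2} in its symmetrised form. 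A more computational alternative would bypass Lemma~\ref{lm:3.2} altogether and evaluate $\binom{p^{s}-1}{p^{s-1}-1}\bmod p^{2}$ directly via Granville's Theorem~\ref{thm:nt}, but reusing Lemma~\ref{lm:3.2} keeps the argument short and self-contained.
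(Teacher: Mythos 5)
Your proof is correct, and it takes a genuinely different route from the paper's. The paper proves this lemma by brute force on base-$p$ digits: it writes each $k$ in the range in base $p$, invokes Kummer's theorem to annihilate every term whose expansion has at least two digits below $p-1$, and then evaluates the surviving one-deficient-digit groups using Granville's Theorem~\ref{thm:nt} together with the technical Lemmas~\ref{lm:tech4} and~\ref{lm:ressq}. You instead collapse the entire sum by the hockey-stick identity to $\binom{p^{s}-1}{p^{s}-p^{s-1}}-1=\binom{p^{s}-1}{p^{s-1}-1}-1$, and tie that single coefficient to the one controlled by Lemma~\ref{lm:3.2} through the exact identity
\[
\binom{p^{s}-1}{p^{s-1}}=(p-1)\binom{p^{s}-1}{p^{s-1}-1},
\]
which follows from Pascal's rule combined with the absorption identity $\binom{p^{s}}{p^{s-1}}=p\binom{p^{s}-1}{p^{s-1}-1}$; cancelling the unit $p-1$ modulo $p^{2}$ then finishes the argument. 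Every step checks out, and there is no circularity, since Lemma~\ref{lm:3.2} precedes this lemma and is proved in the paper without reference to it. Your route is much shorter, bypasses Kummer's theorem, Granville's theorem and both auxiliary lemmas entirely, and actually reveals more than the paper's proof does: Lemmas~\ref{lm:3.1} and~\ref{lm:3.2} are exactly equivalent statements. One consequence worth flagging is that your reduction transfers any defect of Lemma~\ref{lm:3.2} to this one: at $p=2$ both in fact fail (for $s=2$ the sum here is $\binom{2}{1}=2\not\equiv 0\pmod 4$, while $\binom{3}{1}=3\not\equiv p-1\pmod 4$), even though their combination, Proposition~\ref{pr:pmain}, still holds because the two discrepancies cancel. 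That is a defect in the paper's ``for any prime'' phrasing (harmless for the main theorem, which treats $p=2$ separately), not in your derivation, but it does mean your proof, like the paper's statement of Lemma~\ref{lm:3.2}, should be read as an assertion for odd primes.
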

\begin{proof}
\begin{equation}\label{eq:3.1}
\sum_{k=p^{s}-p^{s-1}}^{p^{s}-2}\binom{k}{p^{s}-p^{s-1}-1}=\sum_{k=0}^{s-2}\sum_{\substack{x_{k},\dots,x_{s-2}=0\\ x_{k}<p-1}}^{p-1}
\binom{\overline{p-1, x_{s-2},x_{s-3},\dots,x_{k},p-1,\dots, p-1}_{p}}{\overline{p-2,p-1, p-1,\dots, p-1,p-1,\dots,p-1}_{p}}.
\end{equation}
By Kummer Theorem, if at least two of $x_{k},\dots,x_{s-2}$ are different from $p-1$ then
\[
\binom{\overline{p-1, x_{s-2},x_{s-3},\dots,x_{k},p-1,\dots, p-1}_{p}}{\overline{p-2,p-1, p-1,\dots, p-1,p-1,\dots,p-1}_{p}}\equiv 0 \pmod {p^{2}}.
\]
Hence,
\begin{multline}\label{eq:3.2}
\sum_{k=0}^{s-2}\sum_{\substack{x_{k},\dots,x_{s-2}=0\\ x_{k}<p-1}}^{p-1}
\binom{\overline{p-1, x_{s-2},x_{s-3},\dots,x_{k},p-1,\dots, p-1}_{p}}{\overline{p-2,p-1, p-1,\dots, p-1,p-1,\dots,p-1}_{p}}\equiv\\
\equiv\sum_{k=0}^{s-2}\sum_{x_{k}=0}^{p-2}
\binom{\overline{p-1, p-1,\dots,x_{k},p-1,\dots, p-1}_{p}}{\overline{p-2,p-1,\dots, p-1,p-1,\dots,p-1}_{p}}\pmod {p^{2}},
\end{multline}
where $x_{k}$ is in the $k$-th digit. Let $k=s-2$, then Theorem \ref{thm:nt} and Lemmas \ref{lm:tech4}, \ref{lm:ressq} one has:
\begin{multline}\label{eq:3.3}
\frac{1}{p}\cdot\sum_{x_{s-2}=0}^{p-2}
\binom{\overline{p-1, x_{s-2},p-1,\dots, p-1}_{p}}{\overline{p-2,p-1,p-1,\dots,p-1}_{p}}\equiv
\pm\sum_{x_{s-2}=0}^{p-2}(p-1)\frac{(x_{s-2}+p(p-1))!_{p}}{(p-1+p(p-2))!_{p}(x_{s-2}+1)!}\equiv\\
\equiv\pm(p-1)\sum_{a=1}^{p-1}\frac{\prod_{r=1}^{a-1}(p(p-1)+r)}{a!}\equiv 0\pmod {p^{2}}.
\end{multline}

Now let $0\leq k<s-2$. Then Theorem \ref{thm:nt} and Lemmas \ref{lm:tech4}, \ref{lm:ressq} imply that:
\begin{multline}
\frac{1}{p}\cdot\sum_{x_{k}=0}^{p-2}
\binom{\overline{p-1, p-1,\dots,p-1,x_{k},p-1,\dots, p-1}_{p}}{\overline{p-2,p-1,\dots,p-1, p-1,p-1,\dots,p-1}_{p}}\equiv\\
\equiv\pm\sum_{x_{k}=0}^{p-2}(p-1)\frac{(x_{k}+p(p-1))!_{p}}{(p-1+p(p-1))!_{p}(x_{k}+1+p(p-1))!}\equiv\pm(p-1)\sum_{a=1}^{p-1}\frac{1}{a-p}\equiv\\
\equiv \pm(p-1)\sum_{a=1}^{(p-1)/2}\frac{p}{a(a-p)}\pmod {p^{2}}.
\end{multline}
Hence,
\begin{equation}\label{eq:3.4}
\sum_{x_{k}=0}^{p-2}
\binom{\overline{p-1, p-1,\dots,p-1,x_{k},p-1,\dots, p-1}_{p}}{\overline{p-2,p-1,\dots,p-1, p-1,p-1,\dots,p-1}_{p}}\equiv 0\pmod {p^{2}}.
\end{equation}
Now the claim follows from the identities \eqref{eq:3.1},\eqref{eq:3.2},\eqref{eq:3.3},\eqref{eq:3.4}.
\end{proof}
\begin{proof}[Proof of Proposition \ref{pr:pmain}]
Follows from Lemmas \ref{lm:3.2}, \ref{lm:3.1}.
%
%

\end{proof}

\section{Concluding remarks}\label{sec:conc}
A possibility of a more elegant proof of Theorem \ref{thm:main} still remains. One can try to find a simpler family of quasigenerators of $\Omega^{*}_{U}$ which are quasitoric and stably normally splitting manifolds. I give a possible example motivated by N.~Ray's research \cite{ra-86}. However it requires a deeper study.

\begin{definition}\label{def:sij}
For $0\leq i\leq j$ let $S_{i,j}$ be the hypersurface of $BF_{i}\times BF_{j}$ given by the equation
\begin{equation}\label{eq:rij}
\sum_{k=0}^{i}z_{i,k}w_{j,k+j-i}=0.
\end{equation}
In particular, $S_{0,j}=BF_{j-1}$. (Notice, that $S_{0,0}=\varnothing$.)
\end{definition}

\begin{rem}\label{rem:sing}
The ordering of $w$-variables in \eqref{eq:rij} is crucial. For example, consider the subvariety $S'_{1,2}\subset BF_{1}\times BF_{2}$ given by the equation
\[
z_{1,0}w_{2,0}+z_{1,1}w_{2,1}=0.
\]
Remind that $BF_{1}\times BF_{2}\subset \C P^{1}\times\C P^{1}\times\C P^{2}$ is given by the only equation
\[
w_{2,0}w_{1,1}-w_{2,1}w_{1,0}=0.
\]
Now one can easily check that $S'_{1,2}$ is singular along the subvariety $\lb w_{2,0}=w_{2,1}=0\rb$ isomorphic to $\C P^{1}$.
\end{rem}
Observe that $S_{0,j}=BF_{j-1}$. Using similar arguments from \cite{ra-86} one can show that $S_{i,j}$ are totally normally split. $S_{i,j}$ is dual to the linear vector bundle $\overline{\beta_{i}}\overline{\beta'_{i}}\to BF_{i}\times BF_{j}$. Thus, for $0<i\leq j,\ s_{i+j-1}(S_{i,j})=-\binom{i+j}{i}$. Hence, together with $N_{0,n}$ these manifolds ($i+j=n+1$) form a family of multiplicative generators of $\Omega^{*}_{U}$ in degree $2n$. One can see that $S_{i,j}$ is a non-singular projective algebraic variety obtained by sequential blow-ups of strict transforms of some subvarieties of $BR_{i,j}$. However, these subvarieties seem to be not invariant under the natural torus action on $BR_{i,j}$. So, here is the question.

{\medskip\noindent\bf Problem.}
Is $S_{i,j}$ a toric variety for all $0\leq i\leq j$?



\begin{bibdiv}
\begin{biblist}[\resetbiblist{99}]
\bibselect{biblio_eng}
\end{biblist}
\end{bibdiv}

\end{document}